\documentclass{article}
\usepackage{graphicx} 
\usepackage{amsmath,amsfonts,amsthm,amssymb}
\usepackage{mathrsfs}
\usepackage{mathtools}
\usepackage[dvipsnames]{xcolor}
\usepackage[margin=1in]{geometry}
\usepackage{bm}
\usepackage[colorlinks=true,
            linkcolor=blue,     
            citecolor=blue,    
            filecolor=blue,  
            urlcolor=blue       
]{hyperref}
\usepackage{cite}
\usepackage{authblk}

\numberwithin{equation}{section}

\newtheorem{theorem}{Theorem}[section]
\newtheorem{lemma}[theorem]{Lemma}
\newtheorem{proposition}[theorem]{Proposition}

\theoremstyle{definition}

\renewcommand{\d}{\ensuremath{\mathrm{d}}}
\newcommand{\R}{\ensuremath{\mathbb{R}}}
\newcommand{\N}{\ensuremath{\mathbb{N}}}
\newcommand{\Z}{\ensuremath{\mathbb{Z}}}

\newcommand{\EE}{\ensuremath{\mathcal{E}}}
\newcommand{\EEE}{\ensuremath{\mathscr{E}}}

\newcommand{\cB}{\ensuremath{\mathcal{B}}}
\newcommand{\cH}{\ensuremath{\mathcal{H}}}
\newcommand{\cL}{\ensuremath{\mathcal{L}}}
\newcommand{\cT}{\ensuremath{\mathcal{T}}}

\newcommand{\eps}{\ensuremath{\varepsilon}}
\newcommand{\tbar}{\ensuremath{\vert\mkern-3mu\vert\mkern-3mu\vert}}

\title{Well-posedness and stability of the self-similar profile for a thin-film equation with gravity\thanks{The authors thank \emph{Yakine Bahri} for discussions at an initial stage of this work. \emph{MVG} appreciates discussions with \emph{Daniel Matthes} during the \emph{Lorentz Center Workshop ``Nonlinear Diffusion Models: Analytical \& Numerical Challenges''}, specifically regarding reference \cite{parsch2025globalexponentialstabilitystationary}. \emph{MVG} is grateful to the \emph{University of Victoria} for its kind hospitality. \emph{SI} acknowledges the pleasant hospitality of \emph{Delft University of Technology}. This publication is part of the project \emph{Codimension two free boundary problems} (with project number \emph{VI.Vidi.223.019}) of the research program \emph{ENW Vidi}, which is financed by the \emph{Dutch Research Council} (\emph{NWO}).\\
\emph{2020 Mathematics Subject Classification}: 35C06, 35K65, 35K35, 76D08 \\
\emph{Keywords}: Self-similar asymptotics, thin-film equation, gravity, convergence rate}}
\author{Manuel V.~Gnann\footnote{Delft Institute of Applied Mathematics, Faculty of Electrical Engineering, Mathematics and Computer Science, Delft University of Technology, Mekelweg 4, 2628CD Delft, Netherlands, M.V.Gnann@tudelft.nl} \quad and \quad Slim Ibrahim\footnote{Department of Mathematics and Statistics, University of Victoria, BC Canada, ibrahims@uvic.ca}}
\date{\today}

\allowdisplaybreaks

\begin{document}

\maketitle

\begin{abstract}
We consider the thin-film equation with linear mobility and a stabilizing second-order porous-medium type term modeling gravity. The model admits self-similar solutions, and our goal is to analyze their stability. We reformulate the problem in mass–Lagrangian coordinates and exploit the underlying gradient-flow structure of the equation with respect to a weighted $L^2$
 inner product, where the weight is given by the self-similar source-type profile. This framework allows us to establish a coercivity result for the Hessian (the linearization around the self-similar solution) in a suitably weighted inner product. As a consequence, we prove the convergence of perturbations toward the self-similar profile at an algebraic rate of order $t^{-\frac15}$, in arbitrary scales of weighted Sobolev norms. The analysis relies on maximal-regularity estimates for the linearized evolution, combined with appropriate estimates for the nonlinear terms.

\medskip

Notably, beyond perturbative regimes and in contrast to previous results for the thin-film equation (convergence to the Smyth–Hill profile) or the porous-medium equation (convergence to the Barenblatt–Pattle solution), our analysis does not rely on an explicit (algebraic) representation of the self-similar profile. Instead, it is based solely on a systematic use of the ordinary differential equation satisfied by the self-similar solution, together with a careful analysis of its boundary asymptotics. As a result, we expect that the approach developed here can serve as a flexible toolbox for the study of more general classes of equations and for the stability analysis of special solutions in future work.
\end{abstract}
\tableofcontents
\section{Introduction and heuristic motivation}
\subsection{Thin-film equation with gravity and self-similar solutions}
We consider the thin-film equation with linear mobility and stabilizing gravity term according to
\begin{subequations}\label{tfe-free}
\begin{equation}\label{tfe-stable}
h_t + (h h_{yyy})_y - (h^3 h_y)_y = 0 \quad \text{in} \quad \{h > 0\}
\end{equation}
for which we assume a zero contact-angle condition
\begin{equation}\label{tfe-contact}
h_y = 0 \quad \text{on} \quad \partial\{h > 0\}.
\end{equation}
\end{subequations}
In what follows, we prove well-posedness of compactly-supported solutions to \eqref{tfe-free} and analyze their intermediate time asymptotics as $t \to \infty$. We therefore use the transformations
\begin{equation}\label{self-var-stable}
\xi \coloneqq e^{-\frac s 5} y, \qquad h(t,y) = e^{-\frac s 5} u(s,\xi), \qquad s \coloneqq \ln (t+1),
\end{equation}
which are motivated by the scaling of the equation $t \sim y^5$, and passage to the logarithmic time variable $s$. We then have
\[
h = e^{-\frac s 5} u, \qquad h_t = e^{-s} (e^{-\frac s 5} u_s) - \frac \xi 5 e^{-s} (e^{-\frac s 5} u)_\xi -\frac{e^{-\frac 6 5 s}}5u = e^{-\frac 6 5 s} u_s - \frac 1 5 e^{-\frac 6 5 s} (\xi u)_\xi, \qquad \partial_y = e^{-\frac s 5} \partial_\xi,
\]
so that equation~\eqref{tfe-stable} changes to
\begin{subequations}\label{tfe-self-free}
\begin{equation}\label{tfe-self}
u_s + (u u_{\xi\xi\xi})_\xi - \frac 1 4 (u^4)_{\xi\xi} -\frac 1 5 (\xi u)_\xi = 0 \quad \mbox{in} \quad \{u > 0\}.
\end{equation}
and the boundary condition \eqref{tfe-contact} now reads
\begin{equation}\label{tfe-self-contact}
u_\xi = 0 \quad \text{on} \quad \partial\{u > 0\}.
\end{equation}
\end{subequations}
The self-similar source-type solution is a stationary solution to \eqref{tfe-self}, i.e., it satisfies $u(s,\xi) = U(\xi)$, so that after one integration in $\xi$ on assuming sufficient decay as $\xi \to \pm \infty$ we have 
\begin{equation}\label{self-int-stable}
U''' - U^2 U' - \frac \xi 5 = 0 \quad \mbox{in} \quad \{U > 0\}.
\end{equation}
One further integration leads to
\begin{equation}\label{self-int-stable-2}
U'' + \frac 1 3 (U_0^3 - U^3) - \frac{\xi^2}{10} - U_0'' = 0 \quad \mbox{in} \quad \{U > 0\},
\end{equation}
where $U(0) = U_0$ and $U''(0) = U_0''$. Equation~\eqref{self-int-stable} admits a nonnegative classical solution with compact support $[-\ell,\ell]$ for some $\ell > 0$, where $U' = 0$ on $\partial\{U > 0\}$, see Beretta's work \cite[Theorem~1.1]{beretta1997selfsimilar}. This solution is even (cf.~\cite[\S3--4]{beretta1997selfsimilar}) and satisfies $U > 0$ in $(-\ell,\ell)$ (cf.~\cite[Lemma~2.2]{beretta1997selfsimilar}) and $U \in C^\infty([-\ell,\ell])$. Note that smoothness on $\{U > 0\} = (-\ell,\ell)$ is a consequence of a standard theory on ordinary differential equations (ODEs), and smoothness on $[-\ell,\ell]$ follows from \eqref{self-int-stable-2} and an iteration argument. We mention that a more refined boundary analysis of the self-similar solution also for nonlinear mobility exponents is provided in \cite{MajdoubTayachi2022}, while the unstable case has been analyzed in \cite{SlepcevPugh2005}.

\medskip

The crucial mathematical ingredient for proving well-posedness and stability of perturbations of $U$ in a suitable set of coordinates is the coercivity of the Hessian when viewing the dynamics \eqref{tfe-self-free} as a gradient flow. This will first be motivated at a heuristic level (without introducing all necessary functional analysis) in \S\ref{sec-gf-standard}--\ref{sec-stability-formal}. The functional-analytic setting we actually use, including a formal derivation of suitable linear estimates, is then presented in \S\ref{sec-formal-funcana}. Our rigorous results are stated in \S\ref{sec-main} and the proofs are provided in the subsequent sections \S\ref{sec-linear}--\ref{sec-stability}.

\subsection{Heuristics: the standard gradient-flow formulation\label{sec-gf-standard}}
We introduce the free energy functional
\begin{equation}\label{energy}
\EEE[u] \coloneqq \int_{\{u > 0\}} \Big(\frac 1 2 u_\xi^2 + \frac{1}{12} u^4 - \frac 1 3 U_0^3 u + \frac{\xi^2}{10} u + U_0'' u\Big) \, \d \xi
\end{equation}
and the inner product (the \emph{metric})
\begin{equation}\label{inner}
\langle v, w \rangle_u \coloneqq \int_{\{u > 0\}} u^{-1} (\partial_\xi^{-1} v) (\partial_\xi^{-1} w) \, \d\xi, \quad \text{where } \int v \, \d \xi = \int w \, \d \xi = 0.
\end{equation}
We refer to the work of Otto \cite{Otto2001} for details regarding the underlying geometry, and to the work of Benamou and Brenier \cite{BenamouBrenier2000} for connections between this metric and the equivalent Wasserstein distance. For monographs on these topics, see for instance \cite{villani2003topics,ambrosio2008,Villani2009,Friesecke2025}.

\medskip

We assume a zero contact-angle condition $u_\xi \stackrel{\eqref{tfe-self-contact}}{=} 0$ at $\partial \{u > 0\}$ and obtain the first variation through integration by parts
\begin{align*}
D \EEE[u] [v] &= \int_{\{u > 0\}} \Big( u_\xi v_\xi + \frac 1 3 u^3 v - \frac 1 3 U_0^3 v + \frac{\xi^2}{10} v + U_0'' v\Big) \d \xi \\
&= - \int_{\{u > 0\}} \Big( u_{\xi\xi} + \frac 1 3 (U_0^3 - u^3) - \frac{\xi^2}{10} - U_0''\Big) v \, \d \xi.
\end{align*}
From \eqref{self-int-stable-2} it is immediate that $D \EEE[U] = 0$, that is, the self-similar solution $u = U$ is a critical point of $\EEE$. An elementary computation entails under the additional constraint $\int v \, \d \xi = 0$ that
\begin{align*}
D \EEE[u] [v] &= \int_{\{u > 0\}} \Big( u_{\xi\xi} + \frac 1 3 (U_0^3 - u^3) - \frac{\xi^2}{10} - U_0''\Big)_\xi (\partial_\xi^{-1} v) \, \d \xi \\
&= \int_{\{u > 0\}} u^{-1} \Big(\partial_\xi^{-1}\Big(u u_{\xi\xi\xi} - \frac{1}{4} (u^4)_\xi - \frac{\xi}{5} u\Big)_\xi\Big) (\partial_\xi^{-1} v) \, \d \xi = \langle \nabla_u \EE[u], v \rangle_u.
\end{align*}
This yields the gradient
\begin{equation}\label{grad-self}
\nabla_u \EEE[u] = \Big(u \Big(u_{\xi\xi\xi} - u^2 u_\xi - \frac{\xi}{5}\Big)\Big)_\xi.
\end{equation}
This entails that the thin-film equation \eqref{tfe-self} in self-similar variables is a gradient flow $u_s = - \nabla_u \EEE[u]$, where the gradient is taken with respect to the metric \eqref{inner}.

Note that at this stage the above considerations are formal. However, rather than using the above gradient-flow formulation, in what follows we instead pass to mass-Lagrangian coordinates. The governing partial differential equation (PDE) can then be formulated as a weighted $L^2$-gradient flow, where the weight is given by the self-similar profile $U$.

\subsection{Mass-Lagrangian formulation\label{sec-mass-lag}}
We introduce mass-Lagrangian coordinates (a convenient transform as the PDE \eqref{tfe-stable} is in divergence form and thus conserves mass) in conjunction with a logarithmic time scale and a suitable normalization that factors off the leading-order time asymptotics:
\begin{equation}\label{mass-lagrangian-stable}
\int_{Y(t,-\ell)}^{Y(t,x)} h(t,y) \, \d y = \int_{-\ell}^x U(\tilde x) \, \d \tilde x \quad \text{with} \quad \{h > 0\} = (Y(t,-\ell),Y(t,\ell)), \quad Z \coloneqq e^{-\frac s 5} Y, \quad \text{and} \quad s \coloneqq \ln (t+1).
\end{equation}
This transformation automatically conserves mass and we have $Z = x$ for the self-similar solution. From now on, we assume $Z_x \ge c > 0$ for some $c > 0$ (which we will justify further below). Using \eqref{tfe-stable} we obtain from \eqref{mass-lagrangian-stable},
\begin{equation}\label{mass-lagrangian-stable-2}
h Y_t - h h_{yyy} + \frac 1 4 (h^4)_y = 0, \qquad \partial_y = Y_x^{-1} \partial_x = e^{- \frac s 5} Z_x^{-1} \partial_x, \qquad \partial_t = e^{-s} \partial_s, \qquad h = U Y_x^{-1} = e^{- \frac s 5} U Z_x^{-1}.
\end{equation}
This results in
\begin{align*}
h Y_t &= e^{- \frac s 5} U Z_x^{-1} e^{-s} (e^{\frac s 5} Z)_s = e^{-s} U Z_x^{-1} (Z_s + \tfrac 1 5 Z), \\
- h h_{yyy} &= - e^{-s} U Z_x^{-1} (Z_x^{-1} \partial_x)^3 (U Z_x^{-1}), \\
h^3 h_y &= \frac{e^{-s}}{4} Z_x^{-1} \partial_x (U Z_x^{-1})^4 = e^{-s} (U Z_x^{-1})^3 Z_x^{-1} \partial_x (U Z_x^{-1}),
\end{align*}
so that we obtain the PDE
\begin{equation}\label{tfe-Z-stable}
\partial_s Z - \partial_Z^3 \Theta + \Theta^2 \partial_Z \Theta + \frac Z 5 = 0 \quad \text{in} \quad (0,\infty) \times (-\ell,\ell).
\end{equation}
where
\begin{equation}\label{def-theta-pz}
\partial_Z \coloneqq Z_x^{-1} \partial_x \qquad \text{and} \qquad \Theta \coloneqq U Z_x^{-1},
\end{equation}
and where in view of \eqref{grad-self} we have introduced convenient abbreviations ($\partial_Z$ and $\Theta$ are reminiscent of $\partial_\xi$ and $u$, respectively). Note that for $Z = x$, equation~\eqref{tfe-Z-stable} transforms into
\[
- U''' + U^2 U' + \frac x 5 = 0 \quad \text{in} \quad (-\ell,\ell),
\]
which is indeed \eqref{self-int-stable}. Hence, the self-similar solution $Z = x$ is a stationary solution to \eqref{tfe-Z-stable}, and we pass on to analyzing its stability in the sequel.

\subsection{Heuristics: gradient-flow formulation in mass-Lagrangian coordinates}
We transform the energy functional \eqref{energy} into mass-Lagrangian coordinates. For that observe
\begin{align*}
u \stackrel{\eqref{self-var-stable}}{=} e^{\frac s 5} h \;\, &\stackrel{\mathclap{\eqref{mass-lagrangian-stable-2}}}{=} \,\; U Z_x^{-1} \stackrel{\eqref{def-theta-pz}}{=} \Theta, \\
\xi \stackrel{\eqref{self-var-stable}}{=} e^{- \frac s 5} Y \;\, &\stackrel{\mathclap{\eqref{mass-lagrangian-stable}}}{=} \;\, Z, \\
\partial_\xi \stackrel{\eqref{self-var-stable}}{=} e^{\frac s 5} \partial_y \;\, &\stackrel{\mathclap{\eqref{mass-lagrangian-stable-2}}}{=} \;\, Z_x^{-1} \partial_x \stackrel{\eqref{def-theta-pz}}{=} \partial_Z, \\
\d\xi \stackrel{\eqref{self-var-stable}}{=} e^{-\frac s 5} \d y \;\, &\stackrel{\mathclap{\eqref{mass-lagrangian-stable-2}}}{=} \;\, Z_x \, \d x \eqqcolon \d Z,
\end{align*}
where \eqref{mass-lagrangian-stable} and \eqref{mass-lagrangian-stable-2} constitute the mass-Langrangian transform. Then
\begin{align*}
\EEE[u] &\stackrel{\eqref{energy}}{=} \int_{\{u > 0\}} \Big(\frac 1 2 u_\xi^2 + \frac{1}{12} u^4 - \frac 1 3 U_0^3 u + \frac{\xi^2}{10} u + U_0'' u\Big) \, \d \xi \\
&= \int_{-\ell}^\ell \Big(\frac 1 2 Z_x^{-1} (U Z_x^{-1})_x^2 + \frac{1}{12} U^4 Z_x^{-3} - \frac 1 3 U_0^3 U + \frac{Z^2}{10} U  + U_0'' U\Big) \, \d x \\
&= \int_{\{\Theta > 0\}} \Big(\frac 1 2 (\partial_Z \Theta)^2 + \frac{1}{12} \Theta^4 - \frac 1 3 U_0^3 \Theta + \frac{Z^2}{10} \Theta  + U_0'' \Theta\Big) \, \d Z.
\end{align*}
Discarding constant terms, this motivates the definition of the energy functional
\begin{equation}\label{energy-stable}
E[Z] \coloneqq \int_{-\ell}^\ell \Big(\frac 1 2 Z_x^{-1} (U Z_x^{-1})_x^2 + \frac{1}{12} U^4 Z_x^{-3} + \frac{Z^2}{10} U \Big) \,  \d x = \int_{\{\Theta > 0\}} \Big(\frac 1 2 (\partial_Z \Theta)^2 + \frac{1}{12} \Theta^4 + \frac{Z^2}{10} \Theta \Big) \,  \d Z.
\end{equation}
Provided that $V, Z \colon [-\ell,\ell] \to \R$ are sufficiently regular, and $Z_x \ge c > 0$ for some constant $c > 0$, we can compute the differential through integration by parts using $U = U' = 0$ at $x = \pm \ell$,
\begin{align*}
& D E[Z] [V] \\
& \quad = \; \int_{-\ell}^\ell \Big(- \frac 1 2 Z_x^{-2} V_x (U Z_x^{-1})_x^2 - Z_x^{-1} (U Z_x^{-1})_x (U Z_x^{-2} V_x)_x - \frac 1 4 (U Z_x^{-1})^4 V_x + \frac 1 5 Z U V \Big) \, \d x \\
& \quad = \; \int_{-\ell}^\ell \Big(\frac 1 2 (Z_x^{-2} (U Z_x^{-1})_x^2)_x V + U Z_x^{-2} (Z_x^{-1} (U Z_x^{-1})_x)_x V_x + (U Z_x^{-1})^3 (U Z_x^{-1})_x V + \frac 1 5 Z U V \Big) \, \d x \\
& \quad = \; \int_{-\ell}^\ell \Big(Z_x^{-1} (U Z_x^{-1})_x (Z_x^{-1} (U Z_x^{-1})_x)_x - (U Z_x^{-2} (Z_x^{-1} (U Z_x^{-1})_x)_x)_x + (U Z_x^{-1})^3 (U Z_x^{-1})_x + \frac 1 5 Z U \Big) V \, \d x \\
& \quad = \; \int_{-\ell}^\ell U \Big(- Z_x^{-1}(Z_x^{-1}(Z_x^{-1}(U Z_x^{-1})_x)_x)_x + U^2 Z_x^{-3} (U Z_x^{-1})_x + \frac Z 5\Big) V \, \d x \\
& \quad \stackrel{\mathclap{\eqref{def-theta-pz}}}{=} \; \int_{\{\Theta > 0\}} \Theta \Big(- \partial_Z^3 \Theta + \Theta^2 \partial_Z \Theta + \frac Z 5\Big) V \, \d Z,
\end{align*}
so that we obtain with $D E[Z] [V] = (\nabla_U E[Z], V)_U$. Here, the inner product (the metric) is given by the weighted $L^2$-inner product
\begin{equation}\label{inner-uvw}
(V, W)_U \coloneqq \int_{-\ell}^\ell U V W \, \d x \stackrel{\eqref{def-theta-pz}}{=} \int_{\{\Theta > 0\}} \Theta V W \, \d Z,
\end{equation}
and
\begin{equation}\label{grad-u-ez}
\nabla_U E[Z] = - \partial_Z^3 \Theta + \Theta^2 \partial_Z \Theta + \frac Z 5.
\end{equation}
We write $|V|_U \coloneqq \sqrt{( V, V )_U}$ for the induced norm. This entails that \eqref{tfe-Z-stable} can be formulated as a gradient flow
\begin{equation}\label{tfe-Z-stable-grad}
\partial_s Z + \nabla_U E[Z] = 0 \quad \text{in} \quad (0,\infty) \times (-\ell,\ell).
\end{equation}
Note that the formulation as a weighted $L^2$-gradient flow is in analogy with \cite[Theorem~2.18]{villani2003topics} in which the Wasserstein $L^2$-metric in one dimension is identified with the $L^2$-distance of the inverse cumulative distribution functions.

\subsection{Heuristics: lower bound on the Hessian \label{sec-hess-formal}}
In self-similar variables the energy functional reads
\[
\EEE[u] \stackrel{\eqref{energy}}{=} \int_{\{u > 0\}} \Big(\frac 1 2 u_\xi^2 + \frac{1}{12} u^4 - \frac 1 3 U_0^3 u + \frac{\xi^2}{10} u + U_0'' u\Big) \, \d \xi,
\]
so that
\[
D^2 \EEE[u] [v,v] = \int_{\{u > 0\}} (v_\xi^2 + u^2 v^2) \, \d \xi \ge 0,
\]
entailing convexity everywhere. We therefore investigate whether an analogous property also holds in the mass-Lagrangian formulation, in which the contact lines $Y(t,\pm\ell)$ and the solution close to them is better controlled. Therefore, consider \eqref{energy-stable}, where we assume $V, Z \colon (-\ell,\ell) \to \R$ to be sufficiently regular and $Z_x \ge c$ for some constant $c > 0$, we compute
\begin{align*}
D^2 \Big(\int_{-\ell}^\ell \frac{Z^2}{10} U\, \d x\Big) [V,V] &= \frac 1 5 \int_{-\ell}^\ell U V^2 \, \d x, \\
D^2 \Big(\int_{-\ell}^\ell \frac{1}{12} U^4 Z_x^{-3} \, \d x\Big) [V,V] &= \int_{-\ell}^\ell Z_x^{-1} (U Z_x^{-1})^4 V_x^2 \, \d x= \int_{-\ell}^\ell \Theta^4 (\partial_Z V)^2 \, \d Z.
\end{align*}
Furthermore,
\begin{align*}
D^2 \Big(\int_{-\ell}^\ell \frac 1 2 Z_x^{-1} (U Z_x^{-1})_x^2 \, \d x\Big) [V,V] &= \int_{-\ell}^\ell Z_x^{-3} V_x^2 (U Z_x^{-1})_x^2 \, \d x + 2 \int_{-\ell}^\ell Z_x^{-2} V_x (U Z_x^{-1})_x (U Z_x^{-2} V_x)_x \, \d x \\
&\phantom{=} + \int_{-\ell}^\ell Z_x^{-1} (U Z_x^{-2} V_x)_x^2 \, \d x + 2 \int_{-\ell}^\ell Z_x^{-1} (U Z_x^{-1})_x (U Z_x^{-3} V_x^2)_x \, \d x.
\end{align*}
After integration by parts, we obtain
\begin{align*}
& D^2 \Big(\int_{-\ell}^\ell \frac 1 2 Z_x^{-1} (U Z_x^{-1})_x^2 \, \d x\Big) [V,V] \\
& \quad = \int_{\{\Theta > 0\}}  \big((\partial_Z \Theta)^2 (\partial_Z V)^2 + 2 (\partial_Z \Theta) (\partial_Z V) \partial_Z (\Theta \partial_Z V) + (\partial_Z (\Theta \partial_Z V))^2 + 2 (\partial_Z \Theta) \partial_Z (\Theta (\partial_Z V)^2)\big) \, \d Z \\
& \quad = \int_{\{\Theta > 0\}} \big(6 (\partial_Z \Theta)^2 (\partial_Z V)^2 + \Theta^2 (\partial_Z^2 V)^2 + 8 \Theta (\partial_Z \Theta) (\partial_Z V) (\partial_Z^2 V)\big) \, \d Z \\
& \quad = \int_{\{\Theta > 0\}} \big((2 (\partial_Z \Theta)^2 - 4 \Theta \partial_Z^2 \Theta) (\partial_Z V)^2 + \Theta^2 (\partial_Z^2 V)^2\big) \, \d Z,
\end{align*}
so that
\[
D^2 E[Z][V,V] = \int_{\{\Theta > 0\}} \big(\tfrac 1 5 \Theta V^2 + (2 (\partial_Z \Theta)^2 - 4 \Theta (\partial_Z^2 \Theta) + \Theta^4) (\partial_Z V)^2 + \Theta^2 (\partial_Z^2 V)^2\big) \, \d Z.
\]
For $Z = x$ we then obtain
\[
D^2 E[x][V,V] = \int_{-\ell}^\ell \big(\tfrac 1 5 U V^2 + \Phi (V')^2 + U^2 (V'')^2\big) \, \d x,
\]
where
\begin{equation}\label{def-phi}
\Phi(x) \coloneqq  2 (U'(x))^2 - 4 U(x) U''(x) + (U(x))^4.
\end{equation}
We have $\frac U 5 > 0$ and $U^2 > 0$ in $(-\ell,\ell)$. We compute
\begin{equation}\label{der-phi}
\Phi' = 4 U (- U''' + U^2 U') \stackrel{\eqref{self-int-stable}}{=} - \frac 4 5 x U.
\end{equation}
Hence, $\Phi' < 0$ for $x > 0$ and $\Phi = 0$ at $x = \ell$, which entails by symmetry $\Phi(x) = \Phi(-x)$ that $\Phi > 0$ in $(-\ell,\ell)$. This entails 
the spectral gap (lower bound on the Hessian)
\begin{equation}\label{lower-hessian-z=x}
D^2 E[x][V,V] \ge \frac 1 5 |V|_U^2,
\end{equation}
which is crucial to prove convergence to the self-similar profile.

\medskip

In the general case, we get
\begin{align*}
\partial_Z (2 (\partial_Z \Theta)^2 - 4 \Theta (\partial_Z^2 \Theta) + \Theta^4) &= - 4 \Theta (\partial_Z^3 \Theta) + 4 \Theta^3 \partial_Z\Theta \stackrel{\eqref{grad-u-ez}}{=} 4 \Theta \big(\nabla_U E[Z] - \tfrac Z 5\big) \\
&= - \frac 4 5 x \Theta + 4 \Theta \big(\nabla_U E[Z] - \tfrac 1 5 (Z-x)\big).
\end{align*}
We recognize that formally
\[
\Theta = U (1+o(1)) \qquad \text{and} \qquad x^{-1} \big(\nabla_U E[Z] - \tfrac 1 5 (Z-x)\big) = o(1).
\]
It remains to verify this rigorously. We note that $\partial_Z V = 1 - Z_x^{-1}$ for $V = Z-x$ and $\Theta = U (1- (1-Z_x^{-1}))$. Hence, in order to estimate the remainder terms, it appears necessary to control $\|1 - Z_x^{-1}\|_{C^0([0,\infty) \times [-\ell,\ell])}$. Elementary scaling considerations close to the boundaries $x = \pm \ell$ (consider \eqref{energy-stable} and \eqref{inner-uvw}) imply that such strong control cannot come from purely energetic considerations only, so that we opt for a more refined linear analysis. Therefore, consider the Hessian $\cL$, which, by definition, is given by the idendity
\begin{equation}\label{def-hessian}
(V, \cL V)_ U = D^2 E[x][V,V] \quad \text{for all } V \in C^4([-\ell,\ell]).
\end{equation}
Thus we get
\begin{equation}\label{formula-l}
\cL V = U^{-1} \partial_x^2 (U^2 \partial_x^2 V) - U^{-1} \partial_x (\Phi \partial_x V) + \frac V 5,
\end{equation}
and obtain formal coercivity
\begin{equation}\label{coercivity-l}
(V, \cL V)_U = \int_{-\ell}^\ell \big(\tfrac U 5 V^2 + \Phi (\partial_x V)^2 + U^2 (\partial_x^2 V)^2\big) \, \d x \ge \frac 1 5 |V|_U^2 \quad \text{for all } V \in C^4([-\ell,\ell]).
\end{equation}
We set
\begin{equation}\label{def-v}
V \coloneqq Z-x
\end{equation}
and formulate the PDE \eqref{tfe-Z-stable-grad} as
\begin{equation}\label{pde-nonlinear}
\partial_s V + \cL V = N[V]
\end{equation}
where the nonlinearity is given by
\begin{equation}\label{nonlinearity}
N[V] \coloneqq \cL V + \nabla_U E[x] - \nabla_U E[x+V] = \cL V - \nabla_U E[x+V],
\end{equation}
because $\nabla_U E[x] = 0$. Since $\cL$ is symmetric with respect to $(\cdot,\cdot)_U$, we are in a position to derive higher-order estimates (test with $V$ or $\cL V$ in $(\cdot,\cdot)_U$, or apply $\cL$ to the equation). Thus we obtain control of $\|V_x\|_{C^0([0,\infty) \times [-\ell,\ell])}$ further below.

\subsection{A formal argument for stability\label{sec-stability-formal}}
Using the gradient-flow formulation \eqref{tfe-Z-stable-grad}, the fact that $\nabla_U E[x] = 0$, i.e., the self-similar solution $Z = x$ is a stationary point of the energy $E$, and the bound \eqref{lower-hessian-z=x} on the second variational derivative, we can provide a formal argument indicating that the self-similar solution is stable:
\begin{align*}
\frac{\d}{\d s} |Z-x|_U^2 \; &= \; 2 \langle Z-x, Z_s \rangle_U \stackrel{\eqref{tfe-Z-stable-grad}}{=} - 2 \langle Z-x, \nabla_U E[Z] \rangle_U = - 2 (D E[Z]) [Z-x] \\
&= \; - 2 \int_0^1 (D^2 E[r Z + (1-r) x])[Z-x,Z-x] \, \d r \stackrel{\eqref{def-hessian}}{=} - 2 (Z-x, \cL (Z-x))_U + o(|Z-x|_U^2) \\
&\stackrel{\mathclap{\eqref{coercivity-l}}}{\le} \; - \frac 2 5 (1+o(1)) |Z-x|_U^2.
\end{align*}
Gr\"onwall's lemma entails
\[
|Z(s)-x|_U \le |Z^{(0)}-x|_U \, e^{- \frac 1 5 (1+o(1)) s},
\]
that is, the self-similar solution $Z = x$ is exponentially stable in the logarithmic time $s \stackrel{\eqref{mass-lagrangian-stable}}{=} \ln(t+1)$ with respect to the norm $|\cdot|_U$. The above argumentation requires proximity to the self-similar solution $Z = x$, for which the lower bounds \eqref{lower-hessian-z=x} and equivalently \eqref{coercivity-l} apply. This will be made rigorous in the sequel.

\subsection{Formal estimates on the Hessian: refined linear analysis and functional-analytic setting\label{sec-formal-funcana}}
Consider the linear equation
\begin{equation}\label{pde-linear}
\partial_s V + \cL V = F,
\end{equation}
where $V \colon (0,\infty) \times (-\ell,\ell) \to \R$ and the right-hand side $F \colon (0,\infty) \times (-\ell,\ell) \to \R$ are sufficiently regular. Here, we derive suitable estimates on a formal level and make the arguments rigorous in what follows (see \S\ref{sec-linear}).

\medskip

We test \eqref{pde-linear} with $V$ in the inner product $(\cdot,\cdot)_U$ and obtain with \eqref{coercivity-l}
\begin{equation}\label{pde-tested}
\frac 1 2 \frac{\d}{\d s} |V|_U^2 + \int_{-\ell}^\ell \big(\tfrac U 5 V^2 + \Phi (\partial_x V)^2 + U^2 (\partial_x^2 V)^2\big) \, \d x = (F,V)_U.
\end{equation}
For each $k\in\mathbb N$, we define the inner product and the corresponding norm as follows:
\begin{subequations}\label{sobolev-hk}
\begin{equation}\label{weighted-inner}
(V,W)_k \coloneqq \sum_{j = 0}^k \int_{-\ell}^\ell (\ell^2-x^2)^{j+2} (\partial_x^j V) (\partial_x^j W) \, \d x, \qquad |V|_k \coloneqq \sqrt{(V,V)_k}, \quad \text{where} \quad V, W \in C^k([-\ell,\ell]).
\end{equation}
Note that by interpolation (cf.~Lemma~\ref{lem-interp} below), the norm $|V|_k$ is up to a $(k,\ell)$-dependent constant equivalent to the norm $|V|_{k,\bullet}$, where
\[
|V|_{k,\bullet}^2 \coloneqq \int_{-\ell}^\ell (\ell^2-x^2)^{2} V^2 \, \d x +\int_{-\ell}^\ell (\ell^2-x^2)^{k+2} (\partial_x^k V)^2 \, \d x.
\]
We define the weighted Sobolev spaces
\begin{equation}\label{weighted-sobolev}
\cH^k \coloneqq \text{closure of } C^k([-\ell,\ell]) \text{ with respect to } |\cdot|_k,
\end{equation}
\end{subequations}
and have with $U \sim (\ell^2-x^2)^2$ and $\Phi \sim (\ell^2-x^2)^3$ (cf.~Lemma~\ref{lem-bounds-uphi} below)
\begin{equation}\label{weak-estimate}
\|V\|_{L^\infty(0,\infty;\cH^0)} + \|V\|_{L^2(0,\infty;\cH^2)} \lesssim |V(0)|_0 + \|F\|_{L^2(0,\infty;\cH^0)}.
\end{equation}
Next, we test \eqref{pde-linear} with $\cL V$ in $(\cdot,\cdot)_U$ and obtain with analogous arguments on noting that with \eqref{coercivity-l},
\begin{align*}
(\cL V, \partial_s V)_U &= \frac 1 2 \frac{\d}{\d s} \int_{-\ell}^\ell \big(\tfrac U 5 V^2 + \Phi (\partial_x V)^2 + U^2 (\partial_x^2 V)^2\big) \, \d x, \\
\int_0^s (F,\cL V)_U \, \d s' &\le \frac \eps 2 \int_0^s |F|_U^2 \, \d s' + \frac 2 \eps \int_0^s |\cL V|_U^2 \, \d s' \quad \text{for} \quad \eps > 0,
\end{align*}
that
\[
\|V\|_{L^\infty(0,\infty;\cH^2)} + \|\cL V\|_{L^2(0,\infty;\cH^0)} \lesssim |V(0)|_2^2 + \|F\|_{L^2(0,\infty;\cH^0)}.
\]
With help of elliptic regularity of the linear operator $\cL$ (cf.~Lemma~\ref{lem-elliptic} below) we obtain
\[
\|V\|_{L^\infty(0,\infty;\cH^2)} + \|V\|_{L^2(0,\infty;\cH^4)} \lesssim |V(0)|_2^2 + \|F\|_{L^2(0,\infty;\cH^0)}.
\]
In conjunction with \eqref{pde-linear} to obtain control on $\partial_s V$, this entails the maximal-regularity estimate
\begin{equation}\label{strong-estimate}
\|\partial_s V\|_{L^2(0,\infty;\cH^0)} + \|V\|_{C^0([0,\infty);\cH^2)} + \|V\|_{L^2(0,\infty;\cH^4)} \lesssim |V(0)|_2^2 + \|F\|_{L^2(0,\infty;\cH^0)}.
\end{equation}
Further using the weak estimate \eqref{weak-estimate} for $\cL V$ instead of $V$ (and estimating the right-hand side more carefully) gives
\[
\|\cL V\|_{L^\infty(0,\infty;\cH^0)} + \|\cL V\|_{L^2(0,\infty;\cH^2)} \lesssim |\cL V(0)|_0 + \|\cL F\|_{L^2(0,\infty;\cH^0)},
\]
and elliptic regularity for $\cL$ (cf.~Lemma~\ref{lem-elliptic} below) in conjunction with \eqref{pde-linear} yields
\[
\|\partial_s V\|_{L^2(0,\infty;\cH^2)} + \|V\|_{C^0([0,\infty);\cH^4)} + \|V\|_{L^2(0,\infty;\cH^6)} \lesssim |V(0)|_4^2 + \|F\|_{L^2(0,\infty;\cH^2)}.
\]
Iterating the arguments, we obtain
\begin{equation}\label{maxreg-general}
\|\partial_s V\|_{L^2(0,\infty;\cH^{2k})} + \|V\|_{C^0([0,\infty);\cH^{2k+2})} + \|V\|_{L^2(0,\infty;\cH^{2k+4})} \lesssim |V(0)|_{2k+2} + \|F\|_{L^2(0,\infty;\cH^{2k})}
\end{equation}
for all integers $k \ge 0$. This maximal-regularity estimate \eqref{maxreg-general} in a time-weighted form will be proved further below in Proposition~\ref{prop-maxreg}. It will turn out to be the main ingredient for proving well-posedness of the nonlinear evolution \eqref{pde-nonlinear}. The main step for estimating the nonlinearity is then to have control on $\|V_x\|_{C^0([0,\infty) \times [-\ell,\ell])}$, which one anticipates by scaling arguments for $k \ge 2$, see Lemma~\ref{lem-c0} below.

\section{Main results and discussion\label{sec-main}}
\subsection{Main results}
Consider the nonlinear Cauchy problem (cf.~\eqref{pde-nonlinear})
\begin{subequations}\label{nonlinear-cauchy}
\begin{align}
\partial_s V + \cL V &= N[V] && \text{for} \quad s > 0, \label{pde-cauchy} \\
V &= V^{(0)} && \text{at} \quad s = 0,
\end{align}
\end{subequations}
where $V^{(0)} \colon (0,\infty) \to \R$ is given. We have the following well-posedness result:

\begin{theorem}[Well-posedness]\label{th-well}
For $V^{(0)} \in \cH^6$ such that $|V|_0 \ll_\ell 1$, there exists exactly one solution
\[
V \in H^1(0,\infty;\cH^4) \cap L^2(0,\infty;\cH^8) \cap C^0([0,\infty); \cH^6),
\]
to the nonlinear Cauchy problem \eqref{nonlinear-cauchy}. Additionally, this solution satisfies
\begin{subequations}\label{regularity-weighted-2}
\begin{align}
(s \mapsto s^{\frac{k-2}{2}} (\partial_s^m V)(s+s_{k,\ell})) &\in L^2(0,\infty;\cH^{2k+4-4m}) && \text{for} \quad k \in \N, \quad k \ge 2, \quad m \in \{0,1\}, \\
(s \mapsto s^{\frac{k-2}{2}} V(s+s_{k,\ell})) &\in C^0([0,\infty);\cH^{2k+2}) && \text{for} \quad k \in \N, \quad k \ge 2,
\end{align}
\end{subequations}
where $s_{2,\ell} = s_{3,\ell} = 0$ and $s_{k,\ell} \gg_{k,\ell} 1$ for $k \ge 4$. Furthermore, we have the {\it \`a-priori} estimate
\begin{equation}\label{a-priori-nonlin}
\sum_{k' = 2}^k \Big(\sup_{s \ge 0} s^{k'-2} |V(s+s_{k',\ell})|_{2k'+2}^2 + \int_0^\infty s^{k'-2} \big(|(\partial_s V)(s+s_{k',\ell})|_{2k'}^2 + |V(s+s_{k',\ell})|_{2k'+4}^2\big) \, \d s\Big) \lesssim_{k,\ell} |V^{(0)}|_6^2
\end{equation}
for any $k \in \N$ with $k \ge 2$.
\end{theorem}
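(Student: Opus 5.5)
The proof goes by a contraction (fixed-point) argument for \eqref{nonlinear-cauchy}, built on the linear maximal-regularity estimate \eqref{maxreg-general} in the time-weighted form of Proposition~\ref{prop-maxreg}. The base space is
\[
X \coloneqq H^1(0,\infty;\cH^4) \cap L^2(0,\infty;\cH^8) \cap C^0([0,\infty);\cH^6),
\]
with norm $\|V\|_X$ equal to the left-hand side of \eqref{maxreg-general} for $k=2$. Given $W$ in a small ball $B_\delta \subset X$, we let $V = \cT[W]$ be the solution of the linear problem $\partial_s V + \cL V = N[W]$ with $V(0)=V^{(0)}$. Proposition~\ref{prop-maxreg} then gives
\[
\|\cT[W]\|_X \lesssim_\ell |V^{(0)}|_6 + \|N[W]\|_{L^2(0,\infty;\cH^4)}.
\]
It therefore suffices to show the two nonlinear bounds
\[
\|N[W]\|_{L^2(\cH^4)} \lesssim_\ell \|W\|_X^2
\qquad\text{and}\qquad
\|N[W_1]-N[W_2]\|_{L^2(\cH^4)} \lesssim_\ell (\|W_1\|_X+\|W_2\|_X)\|W_1-W_2\|_X,
\]
valid as long as $\|W\|_{C^0([-\ell,\ell]\times[0,\infty))}$-type quantities are small. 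Banach's fixed-point theorem then yields existence and uniqueness in $B_\delta$ for $|V^{(0)}|_6$ small. Uniqueness in the full class $X$ follows by a continuity argument in time: any solution in $X$ is small on short time intervals, where it must agree with the fixed point. The smallness hypothesis stated as $|V|_0\ll_\ell 1$ has to be read as smallness of the data; in practice I expect to use $|V^{(0)}|_6\ll_\ell 1$.

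The main obstacle is the nonlinear estimate. Writing $\Theta = U(1+W_x)^{-1}$ and $\partial_Z=(1+W_x)^{-1}\partial_x$, we see that $N[W]=\cL W-\nabla_U E[x+W]$ is a sum of terms that are at least quadratic in $W$. Each term is a product of a rational function of $W_x$, namely $(1+W_x)^{-m}$ minus its Taylor polynomial, with derivatives $\partial_x^j W$ for $j\le 4$, and coefficients built from $U,U',U'',U'''$. The first step is to use the ODE \eqref{self-int-stable}, in particular $U'''=U^2U'+x/5$, to eliminate the higher derivatives of $U$. The second step is to use the boundary asymptotics of Lemma~\ref{lem-bounds-uphi} ($U\sim(\ell^2-x^2)^2$, so $U'\sim(\ell^2-x^2)$) to assign each coefficient its weight. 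This weight must be compared with the weight $(\ell^2-x^2)^{j+2}$ in $|\cdot|_4$.

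The key tool is Lemma~\ref{lem-c0}: for $k\ge2$ it controls $\|W_x\|_{C^0}$, and similarly weighted sup norms of higher derivatives, by $|W|_{2k+2}$. This keeps $1+W_x\ge c>0$ and lets us bound the rational factors. The remaining terms are split as follows. One factor, carrying the top-order derivatives, goes in $L^2(0,\infty;\cH^8)$. The other factors go in $L^\infty$ via $C^0(\cH^6)$ and Lemma~\ref{lem-c0}. The interpolation Lemma~\ref{lem-interp} redistributes the weights of intermediate derivatives. The delicate point is the bookkeeping near $x=\pm\ell$: each loss of a power of $(\ell^2-x^2)$ from differentiating $U$ must be compensated by the extra weight carried by higher derivatives in $\cH^k$. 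I expect the degeneracy to balance exactly by scaling, as the heuristics in \S\ref{sec-formal-funcana} indicate.

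For the higher regularity \eqref{regularity-weighted-2} and the a-priori estimate \eqref{a-priori-nonlin}, we proceed by induction on $k\ge2$. We multiply the equation by the time weight $s^{(k-2)/2}$ and apply the time-weighted maximal regularity of Proposition~\ref{prop-maxreg} at level $2k$. This produces a commutator term $\frac{k-2}{2}s^{(k-4)/2}V$, which the induction hypothesis at level $k-1$ controls, since $|V|_{2k}\le|V|_{2(k-1)+4}$. The nonlinearity is estimated at level $\cH^{2k}$ by the same product and $C^0$ bounds. The shifts $s_{k,\ell}$ for $k\ge4$ serve to absorb lower-order or large-constant contributions, presumably through the exponential decay $|V(s)|_0\lesssim e^{-s/5}$ from the coercivity \eqref{coercivity-l}. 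The resulting estimates are summed over $k'\le k$.
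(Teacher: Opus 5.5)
Your base step $k=2$ is the paper's argument. It is a contraction for $\cT$ in a small ball of $H^1(0,\infty;\cH^4)\cap L^2(0,\infty;\cH^8)\cap C^0([0,\infty);\cH^6)$ via Proposition~\ref{prop-maxreg}. The nonlinear bound you sketch is Proposition~\ref{prop-nonlinear} at order $4$, namely $|N[V]-N[W]|_4\lesssim(|V|_6+|W|_6)|V-W|_8+(|V|_8+|W|_8)|V-W|_6$ for $|V|_6,|W|_6$ small. Uniqueness goes by short-time agreement plus time translation, and reading the hypothesis as $|V^{(0)}|_6\ll_\ell 1$ matches the paper.

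The gap is in the higher regularity, exactly where you leave the role of the shifts open. At level $k$ you need $s^{\frac{k-2}{2}}N[V]\in L^2(0,\infty;\cH^{2k})$. In the product estimate at order $2k$, the non-top-order factors are controlled in $C^0$ only through $|V|_{\max\{k+3,6\}}$ (the index $k'+2$ in Proposition~\ref{prop-nonlinear}). This norm must be uniformly \emph{small} for the absorption to work.

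\begin{itemize}
\item For $k=3$ it is $\sup_s|V(s)|_6\lesssim|V^{(0)}|_6$, already available from the base step, hence $s_{3,\ell}=0$.
\item For $k\ge4$ it is strictly stronger than $\cH^6$. For data merely in $\cH^6$ it is in general unbounded as $s\searrow0$, so ``the same product and $C^0$ bounds'' do not close.
\end{itemize}

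The missing idea is that the previous level of the induction already contains decay. The bound $\sup_s s^{k-3}|V(s+s_{k-1,\ell})|_{2k}^2\lesssim_{k,\ell}|V^{(0)}|_6^2$ gives $|V(s)|_{2k}\lesssim s^{-\frac{k-3}{2}}$. Hence $\|V(s_{k,\ell}+\cdot)\|_{C^0([0,\infty);\cH^{2k})}\ll1$ once $s_{k,\ell}\gg_{k,\ell}1$, and restarting the evolution at $s_{k,\ell}$ closes the estimate. This is why shifts appear only for $k\ge4$.

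Your suggested mechanism, exponential decay of $|V(s)|_0$, is not available at this stage. It is the content of Theorem~\ref{th-stability}, whose proof uses \eqref{a-priori-nonlin}. Even granted, it would not give smallness in $\cH^{k+3}$ without interpolating against the very higher norms being estimated.

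Two smaller points.

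First, your commutator bookkeeping does not work as written. Multiplying by $s^{\frac{k-2}{2}}$ and using the unweighted estimate puts $\frac{k-2}{2}s^{\frac{k-4}{2}}V$ into the forcing. That requires $\int_0^\infty s^{k-4}|V|_{2k}^2\,\d s$, which level $k-1$ (weight $s^{k-3}$ on $|V|_{2k+2}^2$) does not control near $s=0$. For $k=3$ it is $\int_0^\infty s^{-1}|V|_6^2\,\d s$, which diverges unless $V^{(0)}=0$. The commutator that actually arises is the energy-level term $(k-2)\int s^{k-3}|V|_{2k+2}^2\,\d s$ from Lemma~\ref{lem-time-discrete}. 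It is already built into Proposition~\ref{prop-maxreg}, so apply that proposition as stated.

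Second, absorbing $\delta\tbar V\tbar_k$ on the right requires knowing $\tbar V\tbar_k<\infty$ beforehand. The paper obtains this by iterating $V_{j+1}=\cT[V_j]$ in $\cB_{3,\delta}$ (and analogously after shifting). It then passes to the limit using convergence in $\tbar\cdot\tbar_2$ and weak lower semicontinuity.
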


Based on the well-posedness proved in Theorem~\ref{th-well}, we can derive the following stability result:
\begin{theorem}[Stability]\label{th-stability}
Suppose $V^{(0)} \in \cH^6$ is such that $|V^{(0)}|_6 \ll 1$ and let
\[
V \in H^1(0,\infty;\cH^4) \cap L^2(0,\infty;\cH^8) \cap C^0([0,\infty); \cH^6),
\]
be the unique solution to problem \eqref{nonlinear-cauchy} given by Theorem~\ref{th-well} and satisfying
\begin{align*}
(s \mapsto s^{\frac{k-2}{2}} (\partial_s^m V)(s+s_{k,\ell})) &\in L^2(0,\infty;\cH^{2k+4-4m}) && \text{for} \quad k \in \N, \quad k \ge 2, \quad m \in \{0,1\}, \\
(s \mapsto s^{\frac{k-2}{2}} V(s+s_{k,\ell})) &\in C^0([0,\infty);\cH^{2k+2}) && \text{for} \quad k \in \N, \quad k \ge 2,
\end{align*}
where $s_{2,\ell} = s_{3,\ell} = 0$ and $s_{k,\ell} \gg_{k,\ell} 1$ for $k \ge 4$. Then it holds
\begin{equation}\label{est-decay}
|\partial_s^m V(s)|_k \lesssim_{k,\ell} e^{- \frac s 5} \quad \text{for} \quad s \gg_{k,\ell} 1 \quad \text{and} \quad m \in \{0,1\}.
\end{equation}
\end{theorem}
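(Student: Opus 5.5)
We prove Theorem~\ref{th-stability} by combining the coercivity \eqref{coercivity-l} of $\cL$ with the smallness of the nonlinearity guaranteed by the \emph{\`a-priori} estimate \eqref{a-priori-nonlin}. The argument has three steps: exponential decay in the base norm $|\cdot|_U$, decay in the energy norms built from powers of $\cL$, and transfer to the norms $|\cdot|_k$ by elliptic regularity.

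\textbf{Step 1: decay in $|\cdot|_U$.} We test \eqref{pde-cauchy} with $V$ in $(\cdot,\cdot)_U$ and use \eqref{pde-tested}. This gives
\[
\tfrac12 \tfrac{\d}{\d s}|V|_U^2 + (V,\cL V)_U = (N[V],V)_U .
\]
From \eqref{nonlinearity}, $N[V]$ is at least quadratic in $V$. Its coefficients depend smoothly on $V_x$ through $Z_x^{-1}=(1+V_x)^{-1}$, and Lemma~\ref{lem-c0} controls $\|V_x\|_{C^0}$ by $|V|_6$. We aim for the bound
\[
|(N[V],V)_U| \lesssim \|V_x\|_{W^{2,\infty}\text{-type}}\,(V,\cL V)_U .
\]
This should follow by writing $N[V]$ in divergence form, so that after integration by parts the terms carry the weights $U^2(\partial_x^2V)^2$, $\Phi(\partial_xV)^2$ and $UV^2$ appearing in $(V,\cL V)_U$. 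The prefactor is $\lesssim |V(s)|_6 \lesssim |V^{(0)}|_6 \ll 1$ by \eqref{a-priori-nonlin}. We obtain $\tfrac{\d}{\d s}|V|_U^2 \le -\tfrac25(1-\delta)|V|_U^2$ with $\delta \lesssim |V^{(0)}|_6$, hence $|V(s)|_U \lesssim e^{-\frac{1-\delta}{5}s}$.

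\textbf{Step 2: removing $\delta$.} To reach the sharp rate $e^{-s/5}$, we note that the loss is integrable. By \eqref{a-priori-nonlin}, $s^{k-2}|V(s+s_{k,\ell})|_{2k+2}^2$ is bounded, and Step~1 gives exponential decay of $|V|_U$. Interpolating between the two (Lemma~\ref{lem-interp}) shows that $\delta(s) \lesssim \|V_x(s)\|_{\ldots}$ decays exponentially. Then $\int_0^\infty \delta(s)\,\d s < \infty$, and Gr\"onwall's lemma yields $|V(s)|_U \lesssim e^{-s/5}$.

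\textbf{Step 3: higher norms.} We repeat the argument for $W=\cL^j V$, using $(\cL^jV,\cL^{j+1}V)_U \ge \tfrac15|\cL^jV|_U^2$. Here $W$ satisfies $\partial_sW + \cL W = \cL^jN[V]$. The commutator and product terms in $\cL^jN[V]$ are again estimated by a decaying coefficient times the dissipation, plus lower-order terms already known to decay like $e^{-s/5}$. For $s\ge s_{k,\ell}$ we use the time-weighted bounds of \eqref{regularity-weighted-2} to keep enough regularity on the coefficients. Elliptic regularity (Lemma~\ref{lem-elliptic}) then converts $|\cL^jV|_U$ into $|V|_{4j}$, and by interpolation we get every $|V|_k$. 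For $m=1$ we read off $\partial_sV = -\cL V + N[V]$ and apply the bounds just obtained with $k$ replaced by $k+4$.

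\textbf{Main obstacle.} The hardest part is the nonlinear estimate $|(\cL^jN[V],\cL^jV)_U| \lesssim \delta(s)\,(\text{dissipation}) + \ldots$ with the correct boundary weights near $x=\pm\ell$. This is where $U\sim(\ell^2-x^2)^2$ and $\Phi\sim(\ell^2-x^2)^3$ (Lemma~\ref{lem-bounds-uphi}) must match the degeneracy of every term in $N[V]$. I would first expand $N[V]$ explicitly as a sum of terms $U^a$ times derivatives of $V$, multiplied by analytic functions of $V_x$. I would then check term by term that the weights appear in the right powers, using Hardy-type inequalities where a weight is missing.
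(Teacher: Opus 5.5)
Your proposal is correct in outline and shares the paper's skeleton: energy estimates for $\cL^jV$ in $(\cdot,\cdot)_U$, the coercivity $(\cL^{j+1}V,\cL^jV)_U=|\cL^jV|_{U,2}^2\ge\frac15|\cL^jV|_U^2$ producing the rate, Gr\"onwall, Lemma~\ref{lem-elliptic} to pass to $|V|_{4j}\ge|V|_k$, and the PDE for $m=1$. The differences lie in how you handle the nonlinear term.

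\emph{Step 1 and the nonlinear estimate.} The paper never works at level $j=0$. For $j\ge1$ it moves one $\cL$ across by symmetry, $(\cL^jN[V],\cL^jV)_U=(\cL^{j-1}N[V],\cL^jV)_{U,2}\lesssim|N[V]|_{4j-2}\,|V|_{4j+2}$. Proposition~\ref{prop-nonlinear}, with $k$ replaced by $4j-2$, then bounds this by $|V|_{\max\{2j+2,6\}}|V|_{4j+2}^2$. This is exactly the estimate you call the main obstacle, and it leaves no lower-order remainders. That matters, because a remainder of size $e^{-\frac25 s}$ entering without an integrable prefactor would cost you a factor $s$. Your Step~1 is therefore dispensable. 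If you keep it, note that Proposition~\ref{prop-nonlinear} does not close at level zero: it gives $|N[V]|_0\lesssim|V|_6|V|_4$, and $|V|_4$ is not controlled by $|V|_{U,2}\sim|V|_2$. What you actually need there is the Hessian identity $(N[V],V)_U=\int_0^1(D^2E[x]-D^2E[x+rV])[V,V]\,\d r$, rather than a vague ``divergence form''. Comparing the coefficients of $D^2E[x+rV]$ with $U$, $\Phi$, $U^2$ gives a prefactor involving $\|V_{xx}\|_{C^0}$ and $\|(\ell^2-x^2)V_{xxx}\|_{C^0}$. That is $|V|_8$, not $|V|_6$; it is still small for $s\ge1$ by \eqref{a-priori-nonlin} with $k=3$.

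\emph{The Gr\"onwall step.} Here your route is more careful than the written proof. The paper keeps only $\frac15|\cL^kV|_U^2$ of the dissipation and treats $C\int|V|_{\max\{2k+2,6\}}|V|_{4k+2}^2\,\d s'$ as an additive forcing. A forcing preserves the rate $e^{-\frac25 s}$ only if it is integrable against $e^{\frac25 s'}$, and this is not shown there. You instead absorb the nonlinear term into the full dissipation via $|V|_{4j+2}\sim|\cL^jV|_2\lesssim|\cL^jV|_{U,2}$. This gives $\frac{\d}{\d s}|\cL^jV|_U^2\le-\frac25(1-C\delta(s))|\cL^jV|_U^2$. You then get a lossy exponential rate first, deduce $\int^\infty\delta\,\d s<\infty$, and conclude the sharp $e^{-\frac s5}$. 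That is the mechanism that actually closes the argument. For $j\ge2$ you do not even need interpolation: $\delta(s)=|V(s)|_{\max\{2j+2,6\}}\le|V(s)|_{4j}\sim|\cL^jV(s)|_U$ already decays at the lossy rate.
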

\subsection{Asymptotics in the original variables}
We first transform back into the original variables by noticing that
\[
Z \stackrel{\eqref{def-v}}{=} x + V \quad \Rightarrow \quad Y \stackrel{\eqref{mass-lagrangian-stable}}{=} e^{\frac s 5} Z = e^{\frac s 5} (x+V).
\]
Thus we find with Theorem~\ref{th-stability}
\[
Y_t \stackrel{\eqref{mass-lagrangian-stable-2}}{=} e^{-s} Y_s = e^{- \frac 4 5 s} \big(\tfrac x 5 + \big(\partial_s+\tfrac 1 5\big) V\big) \stackrel{\eqref{est-decay}, \eqref{c0_embed}}{=} e^{- \frac 4 5 s} \big(\tfrac x 5 + O(e^{- \frac s 5})\big) \stackrel{\eqref{mass-lagrangian-stable}}{=} t^{-\frac 4 5} \big(\tfrac x 5 + O(t^{-\frac 1 5})\big) \quad \text{as} \quad t \to \infty,
\]
where we have used Lemma~\ref{lem-c0} below to obtain a uniform bound for the symbol $O(\cdot)$. Hence, we obtain for the velocities at the contact lines $x = \pm \ell$,
\[
Y_t(t,\pm\ell) = t^{-\frac 4 5} \big(\pm \tfrac \ell 5 + O(t^{-\frac 1 5})\big) \quad \text{as} \quad t \to \infty.
\]
The same propagation speed was found by Bernis in \cite{bernis1996finite} (see \cite{bernis1996finite2} for the case of weak slippage) for the thin-film equation without gravity, matching the speed of the source-type solution \cite{smyth1988high,bernis1992source}.

\medskip

We further analyze the asymptotics of $h$ by noticing that
\[
h \stackrel{\eqref{mass-lagrangian-stable-2}}{=} e^{- \frac s 5} U Z_x^{-1} \stackrel{\eqref{def-v}}{=} e^{- \frac s 5} U (1+V_x)^{-1} \stackrel{\eqref{est-decay}, \eqref{c0_embed}}{=} e^{- \frac s 5} U (1+O(e^{-\frac s 5})) \stackrel{\eqref{mass-lagrangian-stable}}{=} t^{- \frac 1 5} U (1+O(t^{-\frac 1 5})) \quad \text{as} \quad t \to \infty,
\]
where Lemma~\ref{lem-c0} below entails a uniform bound for the symbol $O(\cdot)$. We note that in \cite[\S2]{gnann2015well} similar findings were made, but that the rate of convergence is improved compared to the situation at hand (convergence with rate $\sim t^{-1+0}$). The difference is subtle: In our setting, the symmetric form of the linearization including coercivity hinges on a lower bound of the Hessian in a suitable gradient-flow formulation. In \cite{gnann2015well}, the linearization is given in terms of a second-order polynomial of the second-order Gegenbauer differential operator \cite{suetin2001ultraspherical}. This structure is stable and leads to a changed weight on reformulating the equation in terms of $u \coloneqq Z_x^{-1}-1$, a set of coordinates for which translations of the profile are not seen, thus leading to the faster convergence rate. In our setting this symmetric form appears to be lost for $u \coloneqq Z_x^{-1}-1$, at least in terms of a simple weighted inner product, and thus the translation limits the rate of convergence.

\subsection{Discussion}
We now give a brief exposition of the existing literature regarding convergence results. Note that this list is non-exhaustive and aims at highlighting the novelty of our result relative to what is known:

\subsubsection*{Porous-medium equation}
We mention the convergence result of Carrillo and Toscani \cite{carrillo2000asymptotic} for convergence in the $L^1$-norm towards the Barenblatt-Pattle solution also in higher dimensions using entropy methods. Furthermore, Otto in \cite{Otto2001} has proved convergence to the Barenblatt-Pattle solution relying on geometric considerations in the Wasserstein space. Well-posedness of perturbations around the Barenblatt-Pattle solution has been found by Koch in \cite{koch1999}, while the higher asymptotics have been analyzed in the linear case in one spatial dimension by Bernoff and Witelski in \cite{witelski1998self}, and in higher dimensions by Denzler and McCann in \cite{denzler2008nonlinear}. These higher asymptotics have been lifted to the nonlinear setting by studying invariant manifolds in \cite{seis2015invariant} by Seis. This analysis in turn relies on a corresponding analysis for the fast-diffusion equation by Denzler, Koch, and McCann in \cite{denzler2015higher}.

\subsubsection*{Cahn-Hilliard equation}
For the Cahn-Hilliard equation we mention for instance the nonlinear stability analysis of the kink solution (hyperbolic tangent) by Howard in \cite{howard2007asymptotic} relying on the analysis of the associated Evans function that is lifted to the nonlinear setting. Using gradient-flow techniques and energetic considerations, Otto and Westdickenberg in \cite{otto2014relaxation} have proved convergence to this kink solution with optimal relaxation rates without going into tedious asymptotics like Howard. The latter analysis also provides the possibility to be lifted to special solutions that are not in explicit form, see a respective follow-up work of Otto, Scholtes, and Westdickenberg in \cite{otto-cahnhilliard-2019}. The case of the bump was treated by Biesenbach, Schubert, and Westdickenberg in \cite{Biesenbach04032022}.

\subsubsection*{Thin-film equation with linear mobility and without gravity}
In this case, we mention the work of Bernoff and Witelski \cite{bernoff2002linear}, in which for the thin-film equation
\begin{equation}\label{tfe-higher}
h_t + \nabla \cdot (h \nabla \Delta h) = 0 \quad \text{in} \quad \{h > 0\}, \qquad \text{subject to} \quad \nabla h = 0 \quad \text{on} \quad \partial\{h > 0\},
\end{equation}
the linear stability of self-similar solutions in one spatial dimension is analyzed and an (higher-order) asymptotics towards the self-similar solution (a fourth-order symmetric polynomial with zero contact angle, i.e., up to scaling and translation $(1-|x|^2)^2$), first found by Smyth and Hill in \cite{smyth1988high}, are obtained. Carrillo and Toscani in \cite{carrillo2002long} have proved convergence of arbitrary initial data in the $L^1$-norm with algebraic rate $\sim t^{-\frac 1 5}$ in one spatial dimension, relying on the analogous analysis for the porous-medium equation (convergence to the Barenblatt-Pattle profile) in \cite{carrillo2000asymptotic}. This result was subsequently upgraded by Carlen and Ulusoy in \cite{carlen2007asymptotic} to convergence in the $H^1$-norm. The higher-dimensional case was treated by Matthes, McCann, and Savar\'e in \cite{matthes2009family} for a family of gradient flows including the thin-film equation \eqref{tfe-higher} with linear mobility and the Derrida-Lebowitz-Speer-Spohn (DLSS) equation \cite{derrida1991dynamics}. Convergence of moments of the one-dimensional variant of \eqref{tfe-higher} towards the self-similar solution is obtained by Carlen and Ulusoy in \cite{carlen2014localization}.

\medskip

More recently, focus has been placed on more refined linear analyses leading to nonlinear stability results and higher asymptotics. In this vein, the first author of this paper has proved in \cite{gnann2015well} well-posedness of perturbations and convergence to the Smyth-Hill solution in arbitrary (Sobolev) regularity and one spatial dimension with rate $\sim t^{-1+0}$, as the chosen set of coordinates allows to remove translations (limiting the above results to rates $\sim t^{-\frac 1 5}$). This relies on a corresponding well-posedness and stability result of the half parabola $(y_+)^2$ due to Bringmann, Giacomelli, Kn\"upfer, and Otto in \cite{bringmann2016corrigendum,giacomelli2008smooth}. McCann and Seis in \cite{mccann2015spectrum} analyze higher asymptotics to the linearization of \eqref{tfe-higher} also in higher dimensions, and Seis in \cite{seis2018thin} proves well-posedness of perturbations of self-similar solutions in higher dimensions including convergence, in analogy to what has been proved for the porous-medium equation in \cite{koch1999,Kienzler2016} by Kienzler and Koch and for the thin-film equation in \cite{John2015} by John. Higher asymptotics have been obtained by Seis and Winkler in \cite{seis2024invariant} by studying suitable invariant manifolds, the analysis being analogous to the porous-medium equation in \cite{seis2015invariant} by Seis.

\medskip

Notably, the literature in case of nonzero contact angle at the contact line $\partial\{h > 0\}$ is less exhaustive but we mention that convergence of compactly-supported solutions to the inverted parabola $(1-x^2)_+$ in one dimension was found by Majdoub, Masmoudi, and Tayachi in \cite{MajdoubMasmoudiTayachi2021} relying on the corresponding analysis of Esselborn in \cite{esselborn2016relaxation}, studying stability of the linear profile $y_+$ using energetic considerations and relying on the gradient-flow structure of the equation, which is analogous to the work of Otto and Westdickenberg for the Cahn-Hilliard equation \cite{otto2014relaxation}. See also Kn\"upfer and Masmoudi in \cite{KnuepferMasmoudi2013,KnuepferMasmoudi2015} for a well-posedness and stability analysis to the thin-film equation around the linear profile $y_+$ including a rigorous lubrication approximation coming from Darcy dynamics in the Hele-Shaw cell.

\subsubsection*{A perturbative results to the thin-film equation with linear mobility and second-order porous-medium type term}
Parsch in \cite{parsch2025globalexponentialstabilitystationary} considered the PDE
\begin{equation}\label{parsch}
u_s = -(u u_{\xi\xi\xi})_\xi + \lambda (\xi u)_\xi + \eps (u h'(u)_\xi)_\xi.
\end{equation}
Note that \eqref{parsch} reduces to \eqref{tfe-self} (reformulation of \eqref{tfe-stable} in self-similar variables) on setting $\lambda = \frac 1 5$, $\eps = 1$, and $h(u) = \frac{u^4}{12}$. Further note that $h$ must be cut off to suitable growth for initial data in $H^1(\R) \hookrightarrow L^\infty(\R)$ provided the solution stays bounded to meet Parsch's growth constraint \cite[(9)]{parsch2025globalexponentialstabilitystationary} for $h$. However, boundedness for $s > 0$ is not proved so far (for $H^1$-convergence in a related case, which would imply this, see for instance \cite{matthes2009family}).

Parsch proves existence of weak solutions and for $0 \le \eps < \bar\eps$, with $\bar\eps > 0$ sufficiently small, existence of a steady state (the self-similar solution, i.e., the perturbed Smyth-Hill profile \cite{smyth1988high}) and $L^1$-convergence to the steady state with exponential rate $\sim e^{-(\lambda - C\eps) s}$ for some $C < \infty$. The result \cite{parsch2025globalexponentialstabilitystationary} therefore does not quite cover the situation treated in this paper, but may be lifted in the long run to allow for a non-perturbative analysis by proving that one can choose $\eps = 1$ in \eqref{parsch}, a non-perturbed decay rate $\sim e^{-\lambda s}$, as well as boundedness to remove the growth constraint \cite[(9)]{parsch2025globalexponentialstabilitystationary}.

\subsubsection*{Summary}
In conclusion, the situation at hand covers for the first time a non-perturbative/truncated well-posedness result, Theorem~\ref{th-well}, and a convergence result, Theorem~\ref{th-stability}, for a thin-film equation in which the special solution, around which one perturbs, is not given explicitly, but is only described as a solution to an ODE, i.e., \eqref{self-int-stable}. We leave the endeavor to develop a corresponding gradient-flow perspective with optimal convergence rates, such as in \cite{otto-cahnhilliard-2019} for the Cahn-Hilliard equation, to future work. Furthermore, as in \cite{bernoff2002linear,mccann2015spectrum,seis2024invariant} one could study the higher asymptotics to \eqref{nonlinear-cauchy} by a more refined analysis of the linear spectrum in conjunction with analyzing corresponding invariant manifolds.

\subsection{Outline}
In what follows, we prove Theorems~\ref{th-well} and \ref{th-stability}. Therefore, the linear theory is developed in \S\ref{sec-linear}. This is split into proving coercivity and elliptic regularity in \S\ref{sec-elliptic}, which is the basis for deriving suitable resolvent estimates in \S\ref{sec-resolvent}. The latter would imply maximal regularity using semi-group theory, but in the $L^2$-setting at hand it is also possible to directly derive by elementary arguments maximal regularity using a time-stepping procedure in \S\ref{sec-parabolic}. The nonlinear equation is treated in \S\ref{sec-nonlinear}, that is, suitable estimates of the nonlinearity are proved in \S\ref{sec-nonlinearity}, based on which, together with the linear estimates derived in \S\ref{sec-parabolic}, well-posedness in form of Theorem~\ref{th-well} is proved in \S\ref{sec-well}. The paper is concluded with the proof of stability in form of Theorem~\ref{th-stability} in \S\ref{sec-stability}.

\subsection{Notation and conventions}
We write $A \lesssim_P B$ whenever a constant $C < \infty$ only depending on the set of parameters $P$ exists such that $A \le C B$.

\medskip

For $\alpha \in \R$ we denote by $\lfloor \alpha \rfloor \coloneqq \max\{m \in \Z\colon m \le\alpha\}$ the integer part of $\alpha$.

\medskip

We write $\N = \{1,2,3,\ldots\}$ and $\N_0 = \{0,1,2,3,\ldots\}$.

\medskip

For a set $\Omega \in \R^d$, a Banach space $X$, and $1 \le p \le \infty$, we write $L^p(\Omega;X)$ for the Bochner space of $p$-integrable functions $\Omega \to X$. For $k \in \N_0$ we write $C^k(\Omega;X)$ for the space of $k$-times continuously differentiable functions $\Omega \to X$.

\section{The linear degenerate-parabolic equation}\label{sec-linear}
In this section, we prove existence and uniqueness of solutions with suitable {\it \`a-priori} estimates of the linear equation
\begin{equation}\label{lin-pde}
\partial_t V + \cL V = F \quad \text{in} \quad (0,\infty) \times (-\ell,\ell),
\end{equation}
where $V$ is the unknown for a given right-hand side $F$, and where $\cL$ is given by
\[
\cL V \stackrel{\eqref{formula-l}}{=} U^{-1} \partial_x^2 (U^2 \partial_x^2 V) - U^{-1} \partial_x (\Phi \partial_x V) + \frac V 5 \quad \text{with} \quad \Phi \stackrel{\eqref{def-phi}}{=} 2 (U')^2 - 4 U U'' + U^4,
\]
for some $V \in C^\infty([-\ell,\ell])$. With help of $\Phi' \stackrel{\eqref{der-phi}}{=} - \frac 4 5 x U$ we infer
\begin{equation}\label{formula-l-2}
\cL V = U \partial_x^4 V + 4 U' \partial_x^3 V + (6 U''-U^3) \partial_x^2 V + \frac 4 5 x \partial_x V + \frac{V}{5}.
\end{equation}
\subsection{Coercivity and elliptic regularity}\label{sec-elliptic}
Using Leibnitz's rule, we can deduce the following representations for derivatives:
\begin{lemma}\label{lem-der-l}
For $k \in \N_0$ and $V \in C^\infty([-\ell,\ell])$ it holds
\begin{equation}\label{derivatives-l}
\partial_x^k \cL V = U \partial_x^{k+4} V + (k+4) U' \partial_x^{k+3} V + \frac 1 2 \big((k+3)(k+4) U'' - 2 U^3\big) \partial_x^{k+2} V + \sum_{j = \min\{k,2\}}^{k+1} A_{k,j} \partial_x^j V,
\end{equation}
where $A_{k,j} \in C^\infty([-\ell,\ell])$ for $j = \min\{k,2\},\ldots,k+1$. Furthermore,
\begin{equation}\label{derivatives-l-2}
\partial_x^k \cL V = U^{-\frac{k+2}{2}} \partial_x^2 (U^{\frac{k+4}{2}} \partial_x^{k+2} V) + (8 U)^{-1} \big(- (k+2)(k+4) \Phi + k (k+6) U^4\big) \partial_x^{k+2} V + \sum_{j = \min\{k,2\}}^{k+1} A_{k,j} \partial_x^j V.
\end{equation}
\end{lemma}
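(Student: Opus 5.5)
The plan is a direct computation from the expanded form \eqref{formula-l-2},
\[
\cL V = U \partial_x^4 V + 4 U' \partial_x^3 V + (6 U''-U^3) \partial_x^2 V + \tfrac 4 5 x \partial_x V + \tfrac{V}{5},
\]
which is available since $\Phi' = -\frac45 xU$ by \eqref{der-phi}. Since $U \in C^\infty([-\ell,\ell])$, every coefficient generated below is smooth on $[-\ell,\ell]$, so all terms I do not track explicitly can be absorbed into coefficients $A_{k,j} \in C^\infty([-\ell,\ell])$.

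\textbf{Step 1: the identity \eqref{derivatives-l}.} I apply $\partial_x^k$ to each term of \eqref{formula-l-2} using Leibniz's rule and keep only the contributions to $\partial_x^{k+4}V$, $\partial_x^{k+3}V$ and $\partial_x^{k+2}V$.
\begin{itemize}
\item From $U\partial_x^4 V$ I get $U\partial_x^{k+4}V + kU'\partial_x^{k+3}V + \binom k2 U''\partial_x^{k+2}V$.
\item From $4U'\partial_x^3V$ I get $4U'\partial_x^{k+3}V + 4kU''\partial_x^{k+2}V$.
\item From $(6U''-U^3)\partial_x^2V$ I get $(6U''-U^3)\partial_x^{k+2}V$.
\end{itemize}
Summing, the coefficient of $U''\partial_x^{k+2}V$ is $\frac{k(k-1)}2+4k+6=\frac{(k+3)(k+4)}2$. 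The coefficient of $U'\partial_x^{k+3}V$ is $k+4$. This gives the three leading terms of \eqref{derivatives-l}.

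For the remainder I only need to identify the lowest derivative order that occurs.
\begin{itemize}
\item The fourth-, third- and second-order terms of $\cL$ produce derivatives of $V$ of order at least $4$, $3$ and $2$, respectively.
\item $\partial_x^k(\frac45 x\partial_xV)$ equals $\frac45 x\partial_x^{k+1}V + \frac45 k\partial_x^kV$.
\item $\partial_x^k(V/5)$ equals $\frac15\partial_x^kV$.
\end{itemize}
Hence the remainder involves only $\partial_x^jV$ with $\min\{k,2\}\le j\le k+1$. This is the claimed range.

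\textbf{Step 2: the identity \eqref{derivatives-l-2}.} Set $W \coloneqq \partial_x^{k+2}V$ and $a \coloneqq \frac{k+4}2$. Expanding the product gives
\[
U^{-(a-1)}\partial_x^2(U^aW) = UW'' + 2aU'W' + \Big(aU'' + a(a-1)\tfrac{(U')^2}{U}\Big)W .
\]
Since $2a=k+4$ and $a(a-1)=\frac{(k+2)(k+4)}4$, the first two terms reproduce the leading terms $U\partial_x^{k+4}V + (k+4)U'\partial_x^{k+3}V$ of \eqref{derivatives-l}.

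It then remains to check that the $W$-coefficient of \eqref{derivatives-l}, minus the $W$-coefficient just computed, equals $(8U)^{-1}\big(-(k+2)(k+4)\Phi+k(k+6)U^4\big)$. On the left this difference is
\[
\tfrac{(k+2)(k+4)}2U'' - U^3 - \tfrac{(k+2)(k+4)}4\tfrac{(U')^2}U .
\]
On the right I insert $\Phi = 2(U')^2-4UU''+U^4$ from \eqref{def-phi}. The $(U')^2/U$ and $U''$ terms then match exactly. The $U^3$ terms combine to $\frac{k(k+6)-(k+2)(k+4)}8U^3=-U^3$, which also matches. The remainder $\sum_j A_{k,j}\partial_x^jV$ is the same in both identities.

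\textbf{Expected obstacle.} The argument is purely algebraic. The only point requiring care is the bookkeeping of the binomial coefficients in the $U''$ coefficient, together with the algebraic identity matching the $W$-coefficients in Step~2. Both checks are carried out above.

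In \eqref{derivatives-l-2} the individual coefficients carry the singular factor $U^{-1}$, because $U$ vanishes at $\pm\ell$. This is harmless: the identity is an equality of smooth functions on $(-\ell,\ell)$, and the right-hand side as a whole equals $\partial_x^k\cL V$, which is smooth up to the boundary.
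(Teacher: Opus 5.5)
Your proof is correct and follows essentially the paper's route: a direct computation from \eqref{formula-l-2}, in which you apply the Leibniz rule in closed form where the paper inducts on $k$. The paper also invokes the ODE \eqref{self-int-stable} to simplify the lower-order coefficients, which, as your argument shows, is not needed since only smoothness of the $A_{k,j}$ is claimed; and your matching of the $\partial_x^{k+2}V$ coefficient via \eqref{def-phi} in Step~2 checks out, filling in the verification the paper leaves implicit.
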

We will justify below that the symmetrized term $U^{-\frac{k+2}{2}} \partial_x^2 (U^{\frac{k+4}{2}} \partial_x^{k+2} V)$ is the scaling-wise leading part of $\partial_x^k \cL V$ close to the boundaries $x = \pm \ell$, which is why we refer to it as the \emph{principal part} of $\partial_x^k \cL V$.
\begin{proof}[Proof of Lemma~\ref{lem-der-l}]
Differentiating \eqref{formula-l-2} using the ODE \eqref{self-int-stable} for $U$, we find
\begin{align*}
\partial_x \cL V &= U \partial_x^5 V + 5 U' \partial_x^4 V + (10 U'' - U^3) \partial_x^3 V + (3 U^2 U' + 2x) \partial_x^2 V + \partial_x V, \\
\partial_x^2 \cL V &= U \partial_x^6 V + 6 U' \partial_x^5 V + (15 U'' - U^3) \partial_x^4 V + (10 U^2 U' + 4 x) \partial_x^3 V + 3 (U^2 U'' + 2 U (U')^2 + 1) \partial_x^2 V.
\end{align*}
The formula \eqref{derivatives-l} follows by induction using the ODE \eqref{self-int-stable}. With help of \eqref{def-phi} we then arrive at \eqref{derivatives-l-2}.
\end{proof}

We recall the scale of weighted Sobolev spaces in \eqref{sobolev-hk}, that is,
\begin{align}
 \cH^k \coloneqq \; & \text{closure of } C^\infty([-\ell,\ell]) \text{ with respect to } |\cdot|_k, \nonumber\\
&\text{where } |V|_k^2 \coloneqq \sqrt{(V,V)_k} \text{ with } (V,W)_k \coloneqq \sum_{j = 0}^k \int_{-\ell}^\ell (\ell^2-x^2)^{j+2} (\partial_x^j V) (\partial_x^j W) \, \d x. \nonumber
\end{align}
Furthermore, we introduce the semi-norms
\begin{equation}\label{semi-norm}
[V]_k^2 \coloneqq \int_{-\ell}^\ell (\ell^2-x^2)^{k+2} (\partial_x^k V)^2 \, \d x \quad \text{for } V \in \cH^k.
\end{equation}
In the sequel, the following lemma is convenient for adapting the weight in the norm:
\begin{lemma}\label{lem-bounds-uphi}
There exists $0 < c \le C < \infty$ depending on $\ell$ such that
\begin{subequations}\label{est-u-ell^2-x^2}
\begin{align}
c (\ell^2-x^2)^3 &\le \Phi(x) \le C (\ell^2-x^2)^3 && \text{for all } x \in (-\ell,\ell), \label{bounds-phi} \\
c (\ell^2-x^2)^2 &\le |\Phi'(x)| \le C (\ell^2-x^2)^2 && \text{for all } x \in (-\ell,\ell), \label{bounds-phip} \\
c (\ell^2-x^2) &\le |\Phi''(x)| \le C (\ell^2-x^2) && \text{for all } x \in (-\ell,\ell), \label{bounds-phipp} \\
c (\ell^2-x^2)^2 &\le U(x) \le C (\ell^2-x^2)^2 && \text{for all } x \in (-\ell,\ell), \label{bounds-u} \\
c (\ell^2-x^2) &\le |U'(x)| \le C (\ell^2-x^2) && \text{for all } x \in (-\ell,\ell). \label{bounds-up}
\end{align}
\end{subequations}
\end{lemma}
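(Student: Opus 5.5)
The plan is to derive everything from the boundary behaviour of $U$ at $x=\pm\ell$, using the ODE \eqref{self-int-stable} together with $U(\pm\ell)=U'(\pm\ell)=0$ and the evenness of $U$. By symmetry it suffices to work on $[0,\ell)$. Near $x=0$ all quantities are bounded away from zero except $U'$ and $\Phi'$, which vanish linearly at the origin. So I will handle two regions separately: an interior region $[0,\ell-\delta]$, where compactness and positivity arguments apply, and a boundary layer $[\ell-\delta,\ell)$, where Taylor expansion applies. Note that, read literally, the lower bounds in \eqref{bounds-phip} and \eqref{bounds-up} fail at $x=0$ because $U'(0)=\Phi'(0)=0$. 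I expect the intended meaning is with an extra factor $|x|$, or restricted to a neighbourhood of the boundary, and I would prove it in the form $c|x|(\ell^2-x^2)^m\le|\cdot|$, which is what is actually usable. The upper bounds hold on all of $(-\ell,\ell)$ as stated.

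\textbf{Boundary expansion.} Evaluating \eqref{self-int-stable} at $x=\ell$ with $U=U'=0$ gives $U'''(\ell)=\ell/5>0$. The key step is to show $U''(\ell)=0$.
\begin{itemize}
\item Evaluating \eqref{def-phi} at $x=\ell$ gives $\Phi(\ell)=0$.
\item Integrating \eqref{der-phi} gives $\Phi(x)=\frac45\int_x^\ell yU(y)\,\d y$.
\item Since $U$ vanishes to second order at $\ell$, this integral is $O((\ell-x)^3)$, so $\Phi=O((\ell-x)^3)$.
\item Expanding \eqref{def-phi} directly instead, $\Phi=2(U')^2-4UU''+U^4=-2U''(\ell)^2(\ell-x)^2+O((\ell-x)^3)$.
\end{itemize}
Comparing the two expansions forces $U''(\ell)=0$. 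Hence $U(x)=\frac{\ell}{30}(\ell-x)^3+O((\ell-x)^4)$.

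This contradicts the claimed bound $U\sim(\ell^2-x^2)^2$. So either I have mis-used the contact condition, or the intended scaling is different. Rechecking: \eqref{self-int-stable-2} at $x=\ell$ gives $U''(\ell)=U_0''-\frac13U_0^3+\frac{\ell^2}{10}$, which is a priori unconstrained, and $\Phi(\ell)=0$ holds regardless. The comparison above does seem to force $U''(\ell)=0$ unless $\Phi\not\equiv\frac45\int_x^\ell yU$. But that identity holds only if $\Phi(\ell)=0$, which is true. This is the main obstacle, and I would check it carefully first, since the paper asserts \eqref{bounds-u}.

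If the statement is correct, meaning $U''(\ell)>0$, then the constant of integration must be handled differently. I would look for a flaw in the step $\Phi'=-\frac45xU$: if it instead held only up to a term making $\Phi(\ell)\neq$ the limit, the contradiction would disappear. Assuming the paper's claim $U''(\ell)=:a>0$, the proof proceeds as follows.
\begin{itemize}
\item $U=\frac a2(\ell-x)^2+O((\ell-x)^3)$ and $U'=-a(\ell-x)+O((\ell-x)^2)$.
\item $\Phi'=-\frac45xU\sim-\frac{2a\ell}{5}(\ell-x)^2$.
\item $\Phi''=-\frac45(U+xU')\sim\frac{4a\ell}{5}(\ell-x)$.
\item $\Phi=\frac45\int_x^\ell yU\sim\frac{2a\ell}{15}(\ell-x)^3$.
\end{itemize}
Each of these gives the two-sided bound on $[\ell-\delta,\ell)$ for $\delta$ small, since $\ell+x\in[\ell,2\ell]$ there.

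\textbf{Interior region.} On $[0,\ell-\delta]$ the bounds follow from continuity and positivity:
\begin{itemize}
\item $U>0$ by \cite[Lemma~2.2]{beretta1997selfsimilar}.
\item $\Phi>0$ by the argument following \eqref{der-phi}.
\item $|U'|,|\Phi'|$ are positive away from $x=0$ (using $xU>0$ for $\Phi'$, and monotonicity of $U$ on $(0,\ell)$ from Beretta's evenness analysis for $U'$).
\item $\Phi''=-\frac45(U+xU')$ must be checked to be of one sign or else only an upper bound is claimed.
\end{itemize}
The remaining upper bounds are immediate from smoothness on $[-\ell,\ell]$.
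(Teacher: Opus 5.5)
\textbf{There is a genuine gap, and it sits at the step you single out as key.} That step rests on an algebra slip. Write $t\coloneqq\ell-x$, $a\coloneqq U''(\ell)$ and $U=\frac a2t^2+bt^3+O(t^4)$. Then $U'=-at-3bt^2+O(t^3)$ and $U''=a+6bt+O(t^2)$, so
\[
2(U')^2-4UU''+U^4=(2a^2-2a^2)\,t^2+(12ab-16ab)\,t^3+O(t^4)=-4ab\,t^3+O(t^4).
\]
The $t^2$-coefficient vanishes identically; it is not $-2a^2$. Since $U'''(\ell)=\frac\ell5$ gives $b=-\frac{\ell}{30}$, this reads $\Phi=\frac{2a\ell}{15}t^3+O(t^4)$, which matches $\frac45\int_x^\ell yU\,\d y$ exactly. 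So there is no contradiction and no constraint on $a$.

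The consequence is that $U''(\pm\ell)>0$ is never established: you first ``refute'' it and then merely assume it. Yet \eqref{bounds-u}, \eqref{bounds-phi} and the boundary behaviour in \eqref{bounds-phip}, \eqref{bounds-phipp}, \eqref{bounds-up} all rest on this fact, so as written nothing is proved. The paper takes it from Bernis and Beretta. You can also get it in one line from ingredients you already use:
\begin{itemize}
\item $U\ge0$ near $\ell$ excludes $a<0$.
\item $a=0$ would give $U=\frac{\ell}{30}(x-\ell)^3+O((x-\ell)^4)<0$ as $x\uparrow\ell$ (your expansion has the wrong sign). This contradicts $U>0$ in $(-\ell,\ell)$.
\end{itemize}
Once $a>0$ is in hand, your boundary-layer expansions and your interior compactness argument are correct. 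Together they are just a detailed version of the paper's ``by continuity'' step.

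\textbf{Your caveat about the literal statement is correct, and it extends to \eqref{bounds-phipp}.} Indeed $\Phi''=-\frac45(U+xU')$ equals $-\frac45U(0)<0$ at $x=0$, but behaves like $\frac{4a\ell}{5}(\ell-x)>0$ near $x=\ell$. So it vanishes somewhere in $(0,\ell)$. Hence the lower bounds on $|U'|$, $|\Phi'|$ and $|\Phi''|$ hold only near $x=\pm\ell$. The paper's continuity argument glosses over this. As far as I can see, though, only the two-sided bounds on $U$ and $\Phi$ and the upper bounds on derivatives are used later.

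For your corrected form $|U'|\ge c|x|(\ell^2-x^2)$ you need $U'<0$ on $(0,\ell)$ and $U''(0)<0$. Rather than an unspecified appeal to Beretta, use the maximum principle for $w\coloneqq U'$. It solves $w''-U^2w=\frac x5>0$ on $(0,\ell)$ with $w(0)=w(\ell)=0$, and:
\begin{itemize}
\item An interior maximum with $w\ge0$ is impossible, so $w<0$ in $(0,\ell)$.
\item $w'(0)=0$ would force $w=\frac{x^3}{30}+O(x^4)>0$ near $0$, so $w'(0)<0$.
\item $w'(\ell)=0$ would force $w''(\ell)=\frac\ell5$, hence $w>0$ just left of $\ell$, so $w'(\ell)>0$.
\end{itemize}
This gives $U''(0)<0$. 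It also gives $U''(\ell)>0$ once more, without using the positivity of $U$.
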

\begin{proof}
From \cite[Lemma~2.2]{beretta1997selfsimilar} we know that $U > 0$ in $(-\ell,\ell)$ and from \cite[Theorem~1.3~(a)]{bernis1992source} and \cite[Theorem~5.1]{beretta1997selfsimilar} $U = U' = 0$ and $U'' > 0$ at $x = \pm \ell$. This already entails \eqref{bounds-u} and \eqref{bounds-up} by continuity. Because of
\[
\Phi \stackrel{\eqref{def-phi}}{=} 2 (U')^2 - 4 U U'' + U^4 \qquad \text{and} \qquad \Phi' \stackrel{\eqref{der-phi}}{=} - \frac 4 5 x U,
\]
we further deduce that $\Phi = \Phi' = \Phi'' = 0$ at $x = \pm \ell$ and $\Phi''' > 0$ at $x = \pm \ell$. This entails by symmetry $\Phi(x) = \Phi(-x)$ and because of $U > 0$ in $(-\ell,\ell)$ that $\Phi > 0$ in $(-\ell,\ell)$, and we infer by continuity that \eqref{bounds-phi}, \eqref{bounds-phip}, and \eqref{bounds-phipp} must hold.
\end{proof}

We use the following elementary estimates:
\begin{lemma}[see \cite{gnann2015well}]\label{lem-hardy-lx}
For $\beta > -1$ and $\gamma \in \R$ it holds
\begin{equation}\label{hardy-lx}
\int_{-\ell}^\ell (\ell^2-x^2)^\beta V^2 \, \d x \lesssim_{\beta,\gamma,\ell} \int_{-\ell}^\ell \big((\ell^2-x^2)^\gamma V^2 + (\ell^2-x^2)^{\beta+2} (\partial_x V)^2\big) \, \d x \quad \text{for all } V \in C^\infty([-\ell,\ell]).
\end{equation}
Furthermore, estimate~\eqref{hardy-lx} also holds for $\beta < -1$ provided $V = 0$ at $x = \pm \ell$.
\end{lemma}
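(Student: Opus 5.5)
By symmetry of the weight $(\ell^2-x^2)$ under $x\mapsto -x$, I would split $(-\ell,\ell)$ into a region away from the boundary and neighbourhoods of the two endpoints, and it suffices to treat the right endpoint. So take a smooth cutoff $\eta$ with $\eta=1$ on $[\ell/2,\ell]$ and $\eta=0$ on $[-\ell,0]$. On $[-\ell/2,\ell/2]$ the weights $(\ell^2-x^2)^\beta$ and $(\ell^2-x^2)^\gamma$ are bounded above and below. There the estimate for $(1-\eta)V$ and its mirror is a standard one-dimensional Poincaré/trace-type bound. Concretely, $\sup_{[-\ell/2,\ell/2]}|V|^2$ is controlled by $\int_{I}V^2+\int_I V_x^2$ on an interval $I$ slightly larger than $[-\ell/2,\ell/2]$, with constants depending on $\ell$. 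So the only issue is near $x=\ell$.

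Near $x=\ell$ I would substitute $r=\ell-x\in(0,\ell/2]$. Since $\ell^2-x^2=r(2\ell-r)\sim_\ell r$, it suffices to show, for $\beta>-1$,
\[
\int_0^{\ell/2} r^\beta V^2\,\d r\lesssim \int_0^{\ell}\big(r^\gamma V^2+r^{\beta+2}V_r^2\big)\,\d r .
\]
The key tool is the weighted Hardy identity. Integrate by parts with $\partial_r\big(r^{\beta+1}\big)=(\beta+1)r^\beta$ against $\eta^2V^2$:
\[
(\beta+1)\int r^\beta \eta^2V^2\,\d r=\big[r^{\beta+1}\eta^2V^2\big]_{0}^{\ell}-2\int r^{\beta+1}\eta V(\eta V)_r\,\d r .
\]
For $\beta>-1$ the boundary term at $r=0$ vanishes because $V$ is smooth and hence bounded. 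The term at $r=\ell$ vanishes because of the cutoff. Cauchy--Schwarz and Young then give
\[
\int r^\beta\eta^2V^2\lesssim_\beta \int r^{\beta+2}\eta^2V_r^2+\int_{\operatorname{supp}\eta'} r^{\beta+2}V^2 .
\]
The last term lives where $r\sim\ell$, so it is bounded by $\int r^\gamma V^2$ for any $\gamma$. For $\beta<-1$ I would run the same identity; the factor $(\beta+1)$ is now negative, and the boundary term at $r=0$ needs $r^{\beta+1}V^2\to0$. Using $V(\ell)=0$ and smoothness, $|V|\lesssim r$, so $r^{\beta+1}V^2\lesssim r^{\beta+3}$. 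This tends to zero when $\beta>-3$. For $\beta\le -3$ it does not follow directly, and I would treat that range by applying the identity on $(\delta,\ell)$ with $\delta\to0$, after checking that the right-hand side is finite; if it is infinite the estimate holds trivially.

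The main subtlety, and the only real obstacle, is the case $\beta<-1$: justifying that the boundary term at the degenerate endpoint is harmless, and checking that it has a sign that can be discarded. For $\beta<-1$ the boundary term at $r=\delta$ appears with the favourable sign once it is moved to the other side, since both $\beta+1<0$ and $-[\cdot]_{r=\delta}\ge0$. The $\delta$-truncated inequality therefore holds without needing the boundary term to vanish, and monotone convergence finishes the argument. Combining the interior bound with the two boundary bounds yields \eqref{hardy-lx}, with constants depending on $\beta,\gamma,\ell$.
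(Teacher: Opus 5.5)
Your overall route (localize to the endpoints, then the weighted Hardy identity via $\partial_r r^{\beta+1}=(\beta+1)r^\beta$) is the argument the paper delegates to \cite[Lemma~3.2]{gnann2015well}. It is correct for $\beta>-1$, and also for $-3<\beta<-1$, where $|V|\lesssim r$ makes both the boundary term and $\int r^\beta\eta^2V^2\,\d r$ harmless.

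The gap is your treatment of $\beta\le-3$: the claim that the boundary term at $r=\delta$ has a favourable sign is false. On $(\delta,\ell)$ the identity reads
\[
|\beta+1|\int_\delta^\ell r^\beta\eta^2V^2\,\d r=\delta^{\beta+1}\eta(\delta)^2V(\delta)^2+2\int_\delta^\ell r^{\beta+1}\eta V(\eta V)_r\,\d r .
\]
So for $\beta<-1$ the nonnegative boundary term sits on the side of the upper bound and cannot be discarded. It is for $\beta>-1$ that truncation produces a droppable term. A sanity check confirms this: your $\beta\le-3$ argument never uses $V(\ell)=0$. Yet for $V\equiv1$, $\beta=-4$, $\gamma=0$, the left-hand side is infinite while the right-hand side is finite, and your truncated inequality with the boundary term dropped would bound the former by the latter uniformly in $\delta$.

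The missing step is exactly where the hypothesis $V=0$ at $x=\pm\ell$ must enter. Since $V=0$ at $r=0$, Cauchy--Schwarz gives, for any $\beta<-1$,
\[
\delta^{\beta+1}V(\delta)^2\le\delta^{\beta+1}\Big(\int_0^\delta r^{-\beta-2}\,\d r\Big)\Big(\int_0^\delta r^{\beta+2}V_r^2\,\d r\Big)=\frac{1}{|\beta+1|}\int_0^\delta r^{\beta+2}V_r^2\,\d r .
\]
This is controlled by the right-hand side of \eqref{hardy-lx}, and tends to $0$ when that is finite. With this bound, absorption in the truncated identity is legitimate, since the truncated integral is finite. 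Monotone convergence as $\delta\searrow0$ then yields the estimate for all $\beta<-1$ at once, with no need to split at $\beta=-3$.

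Equivalently, you could Taylor-expand the smooth $V$. If it vanishes to exact order $m\ge1$ at $r=0$, finiteness of $\int r^{\beta+2}V_r^2\,\d r$ forces $\beta+2m>-1$, which is precisely the condition for $r^{\beta+1}V^2\to0$.
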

\begin{proof}
For $\beta > -1$ estimate~\eqref{hardy-lx} follows by scaling from \cite[Lemma~3.2]{gnann2015well}, where Hardy's inequality near the boundaries $x = \pm \ell$ is applied. For $\beta < -1$ the same proof applies provided $V = 0$ at $x = \pm \ell$.
\end{proof}
\begin{lemma}[Interpolation inequality]\label{lem-interp}
For $k, m, n \ge 0$ such that $k \ge m$ and $0 < \eps \le 1$ it holds
\begin{equation}\label{interp-ineq}
[V]_k \lesssim_{k,\ell,m,n} \eps^{-1} [V]_{k-m} + \eps [V]_{k+n} \quad \text{for all } V \in \cH^{k+n}.
\end{equation}
\end{lemma}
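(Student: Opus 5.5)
The interpolation inequality \eqref{interp-ineq} is a statement about weighted seminorms, so I would first reduce it to the case $m = n = 1$ and then iterate.

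\textbf{Reduction to $m=n=1$.} Suppose we know, for every $j \ge 1$ and $0<\delta\le 1$,
\[
[V]_j \lesssim_{j,\ell} \delta^{-1}[V]_{j-1} + \delta [V]_{j+1}.
\]
Then the general statement follows from a standard argument. The basic inequality says that $j \mapsto [V]_j$ is essentially log-convex. More precisely, optimizing over $\delta$ (with the restriction $\delta \le 1$ handled by the trivial bound $[V]_j \lesssim [V]_{j-1}+[V]_{j+1}$) gives
\[
[V]_j^2 \lesssim [V]_{j-1}[V]_{j+1} + [V]_{j-1}^2 .
\]
From the multiplicative inequalities one derives
\[
[V]_k \lesssim [V]_{k-m}^{\frac{n}{m+n}}[V]_{k+n}^{\frac{m}{m+n}} + [V]_{k-m}
\]
by induction on $m+n$, as in the classical proof for unweighted Sobolev seminorms. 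Young's inequality then yields $\eps^{-1}[V]_{k-m}+\eps[V]_{k+n}$, after renaming $\eps$ (note $\eps^{-m/n}\le\eps^{-1}$ fails in general, so one uses $\eps^{-1}$ with constants absorbing powers; for $m,n\ge1$ one can simply apply Young with exponents giving $\eps^{-n/m}$, and then the inequality for $\eps\le 1$ with $\eps^{-1}$ follows only if $n\le m$; so instead I would iterate the basic step directly, absorbing intermediate terms, which gives exactly $\eps^{-1}$ and $\eps$ by choosing the intermediate $\delta$'s comparable to $\eps$). The cases $m=0$ or $n=0$ are trivial.

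\textbf{The basic step.} Set $w=\ell^2-x^2$. By symmetry and a partition of unity, it suffices to work near $x=\ell$, where $w\sim \ell-x$; away from the boundary all weights are comparable to constants and the unweighted interpolation inequality on an interval applies. Near the boundary I would write, with $W=\partial_x^{j-1}V$,
\[
\int w^{j+2}(\partial_x W)^2 \, \d x = -\int \partial_x\big(w^{j+2}\partial_x W\big) W \, \d x + \text{boundary terms}.
\]
The boundary terms vanish at $x=\pm\ell$ because $w^{j+2}=0$ there and $V$ is smooth; we argue for smooth $V$ and then use density. Expanding gives
\[
\int w^{j+2}(\partial_x^2W)W + (j+2)\int w^{j+1}w'(\partial_x W)W .
\]
The first term is bounded by $[V]_{j+1}[V]_{j-1}$ via Cauchy--Schwarz, splitting $w^{j+2}=w^{(j+3)/2}w^{(j+1)/2}$. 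For the second term I would integrate by parts once more, $(\partial_xW)W=\tfrac12\partial_x(W^2)$, which gives $\int \partial_x(w^{j+1}w')W^2$, controlled by $\int w^{j}W^2$.

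\textbf{Main obstacle.} The remaining difficulty is controlling $\int w^{j}(\partial_x^{j-1}V)^2$, which has a weight one power worse than $[V]_{j-1}^2=\int w^{j+1}(\partial_x^{j-1}V)^2$. Here I would invoke Lemma~\ref{lem-hardy-lx} with $\beta=j>-1$ and $\gamma=j+1$:
\[
\int w^{j}W^2 \lesssim \int w^{j+1}W^2 + \int w^{j+2}(\partial_xW)^2 = [V]_{j-1}^2+[V]_j^2 .
\]
This puts a small multiple of $[V]_j^2$ on the right, which cannot be directly absorbed. To fix this, I would use the Hardy inequality in its local form on a small neighbourhood of the boundary, so that the constant in front of $[V]_j^2$ can be made small (and is absorbed); the price is a large constant in front of a $\gamma$-weighted term, which is bounded by $[V]_{j-1}^2$. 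Altogether this yields
\[
[V]_j^2\lesssim [V]_{j+1}[V]_{j-1}+[V]_{j-1}^2,
\]
which is the desired multiplicative form.
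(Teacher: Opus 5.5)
There is a genuine gap exactly at the step you call the main obstacle. Write $w=\ell^2-x^2$ and $W=\partial_x^{j-1}V$. Your second integration by parts produces $2(j+1)(j+2)\int_{-\ell}^\ell x^2w^jW^2\,\d x$ with a positive sign. You want to bound it by $\delta[V]_j^2+C_\delta[V]_{j-1}^2$ with $\delta$ small, using a localized Hardy inequality. No such bound holds.

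The pair $\int w^jW^2$, $\int w^{j+2}(\partial_xW)^2$ is scale invariant at $x=\pm\ell$. Shrinking the neighbourhood therefore does not shrink the Hardy constant, while the price term $[V]_{j-1}^2$ is of lower order at small scales. Concretely, take $V_\lambda(x)=f(\lambda(\ell-x))$ with $f\in C^\infty_{\mathrm c}((1,2))$ and let $\lambda\to\infty$. Then $[V_\lambda]_i^2\sim\lambda^{i-3}$ and $\int w^j(\partial_x^{j-1}V_\lambda)^2\,\d x\sim\lambda^{j-3}$, so $\delta$ stays bounded below by a positive constant. Using the sharp Hardy constant $4/(j+1)^2$, for suitable $f$ the ratio of this term to $[V_\lambda]_j^2$ even approaches $\tfrac{2(j+2)}{j+1}>1$. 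So it cannot be absorbed by any bound involving only $[V]_j$ and $[V]_{j-1}$; the smallness must come paired with $[V]_{j+1}$.

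The paper achieves this by applying Cauchy--Schwarz before Hardy:
\[
\int_{-\ell}^{\ell}x^2w^jW^2\,\d x\le\ell^2\,[V]_{j-1}\Big(\int_{-\ell}^{\ell}w^{j-1}W^2\,\d x\Big)^{\frac12}.
\]
It then uses Lemma~\ref{lem-hardy-lx} twice: for $W$ with $\beta=j-1$, $\gamma=j+1$, and for $\partial_xW=\partial_x^jV$ with $\beta=j+1$, $\gamma=j+2$. This gives $\big(\int w^{j-1}W^2\big)^{1/2}\lesssim[V]_{j-1}+[V]_j+[V]_{j+1}$. Now $[V]_j$ only enters through the product $[V]_{j-1}[V]_j$, which Young's inequality absorbs. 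Hence $[V]_j^2\lesssim[V]_{j-1}^2+[V]_{j-1}[V]_{j+1}$ holds globally on $(-\ell,\ell)$, with no partition of unity needed.

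Your reduction step also needs repair. Young's inequality applied to $[V]_{k-m}^{n/(m+n)}[V]_{k+n}^{m/(m+n)}$ yields $\eps^{-m/n}[V]_{k-m}+\eps[V]_{k+n}$, not $\eps^{-n/m}$. So the stated form with $\eps^{-1}$ and $\eps$ follows exactly when $m\le n$. This covers what the paper actually uses, e.g.\ $m=n=1$ in Lemma~\ref{lem-elliptic}.

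For $m>n$, including your ``trivial'' case $n=0<m$, no choice of intermediate $\delta$'s can produce it, because the inequality is false there. Take the family $V_\lambda$ above with $k=m=2$, $n=1$ and $\eps=\lambda^{-3/4}$: the left-hand side is $\sim\lambda^{-1/2}$, while the right-hand side is $\sim\lambda^{-3/4}$. The paper's induction step that increases $m$ has the same defect. What holds in general is the log-convex form you derived.
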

\begin{proof}
For $k \ge 1$ and $V \in C^\infty([-\ell,\ell])$ we have
\begin{align*}
[V]_k^2 &= \int_{-\ell}^\ell (\ell^2-x^2)^{k+2} (\partial_x^k V)^2 \, \d x \\
&= (k+2) \int_{-\ell}^\ell x (\ell^2-x^2)^{k+1} \partial_x (\partial_x^{k-1} V)^2 \, \d x - \int_{-\ell}^\ell (\ell^2-x^2)^{k+2} (\partial_x^{k-1} V) (\partial_x^{k+1} V) \, \d x \\
&= (k+1) (k+2) \int_{-\ell}^\ell x^2 (\ell^2-x^2)^k (\partial_x^{k-1} V)^2 \, \d x - (k+2) \int_{-\ell}^\ell (\ell^2-x^2)^{k+1} (\partial_x^{k-1} V)^2 \, \d x \\
&\phantom{=} - \int_{-\ell}^\ell (\ell^2-x^2)^{k+2} (\partial_x^{k-1} V) (\partial_x^{k+1} V) \, \d x \\
&\le 2 (k+1) (k+2) \ell^2 [V]_{k-1} \Big(\int_{-\ell}^\ell (\ell^2-x^2)^{k-1} (\partial_x^{k-1} V)^2 \, \d x\Big)^{\frac 1 2} - (k+2) [V]_{k-1}^2 + [V]_{k-1} [V]_{k+1}.
\end{align*}
By \eqref{hardy-lx} of Lemma~\ref{lem-hardy-lx} it then follows
\[
\Big(\int_{-\ell}^\ell (\ell^2-x^2)^{k-1} (\partial_x^{k-1} V)^2 \, \d x\Big)^{\frac 1 2} \lesssim_{k,\ell} [V]_{k-1} + [V]_k + [V]_{k+1},
\]
so that after absorption of the term $\sim [V]_{k-1} [V]_k$ by Young's inequality, we end up with
\begin{equation*}
[V]_k^2 \lesssim_{k,\ell} [V]_{k-1}^2 + [V]_{k-1} [V]_{k+1}.
\end{equation*}
Now assume that
\begin{equation}\label{inter-step-ind}
[V]_k^2 \lesssim_{k,\ell,n_0} [V]_{k-m}^2 + [V]_{k-m} [V]_{k+n},
\end{equation}
for all $0 \le m \le \min\{k,n_0\}$, $0 \le n \le n_0$, where $n_0 \ge 1$. We can then use \eqref{inter-step-ind} with $k$ replaced by $k+n_0$, $m = n_0$, and $n = 1$ to estimate
\[
[V]_{k+n_0}^2 \lesssim_{k,\ell,n_0} [V]_k^2 + [V]_k [V]_{k+n_0+1}
\]
and if $k > n_0$, we again use \eqref{inter-step-ind} with $k$ replaced by $k-n_0$, $m = 1$, $n = n_0$, so that
\[
[V]_{k-n_0}^2 \lesssim_{k,\ell,n_0} [V]_{k-n_0-1}^2 + [V]_{k-n_0-1} [V]_k.
\]
In both cases, absorption of $[V]_k$ to the left-hand side entails that \eqref{inter-step-ind} is valid with $n_0$ replaced by $n_0+1$. This proves \eqref{interp-ineq} on applying Young's inequality.
\end{proof}
\begin{lemma}[Coercivity and symmetry]\label{lem-coerc}
We have
\begin{subequations}\label{coercivity-l-2}
\begin{align}
(V, \cL W)_U &= \frac 1 5 (V,W)_U + (\sqrt\Phi \partial_x V, \sqrt\Phi \partial_x W)_{L^2(-\ell,\ell)} + (U \partial_x^2 V,U \partial_x^2 W)_{L^2(-\ell,\ell)} \eqqcolon (V,W)_{U,2}, \quad V,W \in \cH^4, \label{coercivity-l-2-1}\\
|V|_{U,2} &\coloneqq \sqrt{(V,V)_{U,2}} \gtrsim |V|_2^2, \quad V \in \cH^2. \label{coercivity-l-2-2}
\end{align}
\end{subequations}
\end{lemma}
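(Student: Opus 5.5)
We prove \eqref{coercivity-l-2-1} first for $V,W\in C^\infty([-\ell,\ell])$ and then extend by density. This uses the symmetric form \eqref{formula-l}, $\cL W = U^{-1}\partial_x^2(U^2\partial_x^2 W) - U^{-1}\partial_x(\Phi\partial_x W) + \frac W5$. Multiplying by $UV$ and integrating over $(-\ell,\ell)$ gives
\[
(V,\cL W)_U = \int_{-\ell}^\ell V\,\partial_x^2(U^2\partial_x^2W)\,\d x - \int_{-\ell}^\ell V\,\partial_x(\Phi\partial_x W)\,\d x + \tfrac15 (V,W)_U .
\]
We integrate by parts twice in the first integral and once in the second. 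The boundary terms at $x=\pm\ell$ are
\[
V\,\partial_x(U^2\partial_x^2W) = V\,(2UU'\partial_x^2W + U^2\partial_x^3W), \qquad \partial_xV\,U^2\partial_x^2W, \qquad V\,\Phi\,\partial_xW .
\]
All of them vanish, because $U=U'=0$ and $\Phi=0$ at $\pm\ell$ (Lemma~\ref{lem-bounds-uphi}) and the functions are smooth up to the boundary. This yields the identity with $(V,W)_{U,2}$.

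To pass to $V,W\in\cH^4$, we check that both sides are continuous with respect to $|\cdot|_4$.
\begin{itemize}
\item Right-hand side: by Lemma~\ref{lem-bounds-uphi}, $U\sim(\ell^2-x^2)^2$, $\Phi\sim(\ell^2-x^2)^3$ and $U^2\sim(\ell^2-x^2)^4$. Hence $(\cdot,\cdot)_{U,2}$ is bounded by $|\cdot|_2\le|\cdot|_4$.
\item Left-hand side: we use \eqref{formula-l-2}. The term $U\partial_x^4W$ is weighted in $|\cdot|_U$ by $U^3\sim(\ell^2-x^2)^6$, which matches $[W]_4$. The lower-order terms $U'\partial_x^3W$, $(6U''-U^3)\partial_x^2W$, $x\partial_xW$ and $W$ carry weights $(\ell^2-x^2)^{2+2}$ for $\partial_x^3 W$ and $(\ell^2-x^2)^2$ for $\partial_x^2W$, $\partial_xW$ and $W$. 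These weights are too weak compared with $[W]_j$, so we control them by Lemma~\ref{lem-hardy-lx}. Iterating \eqref{hardy-lx} with $\beta=j+1$ and $\gamma$ large, we bound $\int(\ell^2-x^2)^{j+1}(\partial_x^jW)^2$ by $|W|_4^2$. The same works for $\int(\ell^2-x^2)^2(\partial_x^jW)^2$, since $\beta>-1$ throughout.
\end{itemize}
So $|\cL W|_U\lesssim|W|_4$, and density gives \eqref{coercivity-l-2-1}.

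For \eqref{coercivity-l-2-2}, the bounds of Lemma~\ref{lem-bounds-uphi} give
\[
|V|_{U,2}^2\gtrsim \int(\ell^2-x^2)^2V^2+\int(\ell^2-x^2)^3(\partial_xV)^2+\int(\ell^2-x^2)^4(\partial_x^2V)^2 .
\]
The first and last terms are exactly $[V]_0^2$ and $[V]_2^2$. The middle term is stronger than $[V]_1^2=\int(\ell^2-x^2)^3(\partial_xV)^2$; in fact it is equal to it, and in any case the interpolation inequality of Lemma~\ref{lem-interp} also gives $[V]_1\lesssim [V]_0+[V]_2$. Hence $|V|_{U,2}^2\gtrsim|V|_2^2$ (the statement's ``$|V|_2^2$'' should read $|V|_2$, consistent with squaring both sides). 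This holds for smooth $V$ and extends to $\cH^2$ by density, since both sides are continuous in $|\cdot|_2$.

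The main obstacle is the justification of the boundary terms and the density step. For the limited regularity in $\cH^4$ one must make sure that $|\cL W|_U$ is really controlled by $|W|_4$. That requires the Hardy-type bounds of Lemma~\ref{lem-hardy-lx} to handle the lower-order coefficients $U'$ and $U''$, which degenerate less than the natural weights.
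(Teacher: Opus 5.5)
Your proof is correct and follows the paper's route. You integrate by parts in the symmetric form \eqref{formula-l}, where the boundary terms vanish because $U=\Phi=0$ at $x=\pm\ell$, and then use the bounds $U\sim(\ell^2-x^2)^2$ and $\Phi\sim(\ell^2-x^2)^3$ of Lemma~\ref{lem-bounds-uphi}, which make $|V|_{U,2}$ directly comparable to $|V|_2$; you also supply the density step via $|\cL W|_U\lesssim|W|_4$ (Hardy), which the paper leaves implicit, and you rightly read the square in \eqref{coercivity-l-2-2} as a typo.
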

\begin{proof}
The first equality \eqref{coercivity-l-2-1} is immediate from the definition \eqref{formula-l} of $\cL$ through integration by parts, and was already anticipated in \eqref{coercivity-l} by considering the underlying gradient flow of the nonlinear problem. Estimate~\eqref{coercivity-l-2-2} then follows from \eqref{bounds-phi} and \eqref{bounds-u} of Lemma~\ref{lem-bounds-uphi}.
\end{proof}

Next, we first establish elliptic estimates on $\cL$:
\begin{lemma}[Elliptic regularity]\label{lem-elliptic}
We have for $k \ge 0$,
\begin{equation}\label{est-l}
|\cL V|_k \sim_{k,\ell} |V|_{k+4} \quad \text{for all } V \in \cH^{k+4}. 
\end{equation}
\end{lemma}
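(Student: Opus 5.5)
The upper bound $|\cL V|_k \lesssim |V|_{k+4}$ is the easy direction. The lower bound $|V|_{k+4} \lesssim |\cL V|_k$ I will prove by induction on $k$, combining coercivity (Lemma~\ref{lem-coerc}), the representation of derivatives (Lemma~\ref{lem-der-l}), Hardy (Lemma~\ref{lem-hardy-lx}) and interpolation (Lemma~\ref{lem-interp}). By density it suffices to take $V \in C^\infty([-\ell,\ell])$.

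\emph{Upper bound.} I use \eqref{derivatives-l} for $\partial_x^j \cL V$, $j \le k$. By \eqref{bounds-u}--\eqref{bounds-up}, each term $a(x)\partial_x^{i}V$ has a coefficient of size $(\ell^2-x^2)^{\max\{0,\,j+4-i-2\}}$ at least in the three leading terms; the lower-order coefficients $A_{j,i}$ are merely bounded. The weight $(\ell^2-x^2)^{j+2}$ on $(\partial_x^j\cL V)^2$ then dominates the weight $(\ell^2-x^2)^{i+2}$ for every $i \le j+4$ that actually occurs. For example, the term $U\partial_x^{j+4}V$ contributes $(\ell^2-x^2)^{j+6}(\partial_x^{j+4}V)^2$, and a bounded coefficient times $\partial_x^iV$ with $i\le j+1$ contributes weight $(\ell^2-x^2)^{j+2}\le C(\ell^2-x^2)^{i+2}$. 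Summing over $j$ gives $|\cL V|_k \lesssim |V|_{k+4}$.

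\emph{Lower bound, base case $k=0$.} Testing with $V$ and using \eqref{coercivity-l-2} gives
\[
|V|_2^2 \lesssim (V,\cL V)_U \le |V|_U|\cL V|_U \lesssim |V|_0 |\cL V|_0,
\]
hence $|V|_2 \lesssim |\cL V|_0$. For the top order I write $\cL V = U^{-1}\partial_x^2(U^2\partial_x^2 V) + \text{l.o.t.}$ and set $W \coloneqq \partial_x^2V$. I then need a weighted estimate
\[
\int (\ell^2-x^2)^{2}\big(U^{-1}\partial_x^2(U^2W)\big)^2\,\d x \gtrsim [V]_4^2 + [V]_3^2 - C|V|_2^2 .
\]
My plan is to expand the square, $U^{-1}\partial_x^2(U^2 W)=U W''+4U'W'+(2U'' +2(U')^2U^{-1})W$, and integrate the cross terms by parts. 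Using \eqref{derivatives-l-2} with $k=0$, the resulting quadratic form in $(W,W',W'')$ near $x=\pm\ell$ should have positive leading coefficients: there $U\sim c(\ell\mp x)^2$, so the operator is of Euler type $\partial_r^2 r^4\partial_r^2$, whose indicial analysis gives coercivity in these weights. The lower-order remainders I treat with Lemma~\ref{lem-hardy-lx} and interpolation \eqref{interp-ineq} with small $\eps$, absorbing $[V]_3$ and controlling everything else by $|V|_2 \lesssim |\cL V|_0$.

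\emph{Induction step.} Suppose the bound holds up to $k-1$. I apply \eqref{derivatives-l-2} with index $k$: the principal part $U^{-\frac{k+2}{2}}\partial_x^2(U^{\frac{k+4}{2}}\partial_x^{k+2}V)$, tested in the weight $(\ell^2-x^2)^{k+2}\sim U^{\frac{k+2}{2}}$, gives after integration by parts a symmetric form. This yields control of $[V]_{k+4}$ (and $[V]_{k+3}$) by $[\cL V]_k$, plus the $\Phi$ and $U^4$ term at order $k+2$, plus terms of order at most $k+1$. Those remaining terms are bounded by $|V|_{k+3}$ or by weaker norms with gained weight. I then absorb them by interpolation and the induction hypothesis $|V|_{k+3}\lesssim|\cL V|_{k-1}\le|\cL V|_k$.

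\emph{Main obstacle.} I expect the hardest step to be the boundary coercivity of the principal part for each $k$, i.e.\ showing that
\[
\int U^{\frac{k+2}{2}}\Big(U^{-\frac{k+2}{2}}\partial_x^2\big(U^{\frac{k+4}{2}} W\big)\Big)^2\,\d x \gtrsim \int U^{\frac{k+6}{2}}(W'')^2\,\d x - \text{l.o.t.}
\]
without sign-indefinite boundary contributions. My first attempt is to substitute $U\approx c(\ell\mp x)^2$ near each endpoint, compute the indicial polynomial of the resulting Euler operator, and check that it has no roots on the critical line for these weights. The exact $U$ then differs from this model by relatively small terms near the boundary, which I would absorb, and away from the boundary the operator is uniformly elliptic.
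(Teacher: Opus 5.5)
Your architecture matches the paper's: density, the upper bound, $|V|_2\lesssim|\cL V|_0$, then induction through the principal part in \eqref{derivatives-l-2} using $|V|_{k+3}\lesssim|\cL V|_{k-1}$. Getting $|V|_2\lesssim|\cL V|_0$ by testing with $V$ is a cleaner shortcut than the paper's expansion of $|\cL V|_U^2$ with Young's inequality.

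\textbf{The main step: a different route from the paper.} The paper neither expands the principal part nor uses a model operator. It sets $G\coloneqq U^{\frac{k+4}{2}}\partial_x^{k+2}V$ and notes that $G$ and $\partial_xG$ vanish at $x=\pm\ell$. It then applies the $\beta<-1$ case of Lemma~\ref{lem-hardy-lx} twice, bounding $\int(\ell^2-x^2)^{-k-6}G^2$ and $\int(\ell^2-x^2)^{-k-4}(\partial_xG)^2$ by $\int(\ell^2-x^2)^{-k-2}(\partial_x^2G)^2$ plus lower-order terms. Finally it recovers $[V]_{k+4}^2$ from these three integrals by the Leibniz rule. This is global and takes a few lines.

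Your indicial route also works, and more simply than you expect. With $r$ the distance to the endpoint, the model satisfies the exact identity
\[
\int_0^\infty r^{-k-2}\big(\partial_r^2(r^{k+4}W)\big)^2\,\d r=\int_0^\infty r^{k+6}(\partial_r^2W)^2\,\d r
\]
for smooth compactly supported $W$ on $[0,\infty)$. The reason is that the Mellin symbols $(s+k+4)(s+k+3)$ and $s(s-1)$ have equal modulus on $\operatorname{Re}s=-\frac{k+3}{2}$.

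Consequently, after your integrations by parts the coefficients of $(\partial_x^{k+3}V)^2$ and $(\partial_x^{k+2}V)^2$ are not positive; they vanish at leading order. For the true $U$ they are $O((\ell^2-x^2)^{k+5})$ and $O((\ell^2-x^2)^{k+3})$, so Hardy, interpolation and the induction hypothesis absorb them. The expansion therefore closes globally, and the localization, perturbation and interior-ellipticity steps you plan are unnecessary. What your viewpoint adds is an explanation of why these weights are the right ones: there is no indicial root on the critical line and no loss of constant. That would transfer to principal parts without the factorized form. The paper's two Hardy inequalities are the cheap factorized version of the same fact.

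\textbf{An error in the upper bound.} As written, this step fails: the weights do not dominate pointwise. In \eqref{derivatives-l}, consider the terms $(j+4)U'\partial_x^{j+3}V$, $\frac12\big((j+3)(j+4)U''-2U^3\big)\partial_x^{j+2}V$ and $A_{j,j+1}\partial_x^{j+1}V$. Their coefficients have size $\ell^2-x^2$, $1$ (since $U''(\pm\ell)>0$) and $1$.
\begin{itemize}
\item After multiplying by $(\ell^2-x^2)^{j+2}$ they put the weights $(\ell^2-x^2)^{i+1}$, $(\ell^2-x^2)^{i}$ and $(\ell^2-x^2)^{i+1}$ on $(\partial_x^iV)^2$.
\item These fall short of the weight $(\ell^2-x^2)^{i+2}$ in $[V]_i$. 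In particular your inequality $(\ell^2-x^2)^{j+2}\le C(\ell^2-x^2)^{i+2}$ is false for $i=j+1$.
\item The first two terms are not lower order at all: near $x=\pm\ell$ they have the same scaling as $U\partial_x^{j+4}V$.
\end{itemize}
The fix is Lemma~\ref{lem-hardy-lx} with $\beta>-1$, iterated for the $U''$-term, trading each missing power of the weight for one derivative. This is exactly how the paper proves the upper bound. The same bookkeeping is needed for the leftover terms in your induction step.
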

\begin{proof}
The bounds \eqref{est-u-ell^2-x^2} of Lemma~\ref{lem-bounds-uphi} and estimate~\eqref{hardy-lx} of Lemma~\ref{lem-hardy-lx} yield $|\cL V|_k \lesssim_{k,\ell} |V|_{k+4}$ in \eqref{est-l}. We first prove \eqref{est-l} for $k = 0$. Without loss of generality, we assume that $V \in C^\infty([-\ell,\ell])$. From \eqref{formula-l} it follows through integration by parts
\begin{align}
|\cL V|_U^2 &= \int_{-\ell}^\ell U^{-1} (\partial_x^2 (U^2 \partial_x^2 V))^2 \, \d x + \int_{-\ell}^\ell U^{-1} (\partial_x (\Phi \partial_x V))^2 \, \d x + \frac{1}{25} \int_{-\ell}^\ell U V^2 \, \d x \nonumber \\
& \phantom{=} - 2 \int_{-\ell}^\ell U^{-1} (\partial_x (\Phi \partial_x V)) (\partial_x^2 (U^2 \partial_x^2 V)) \, \d x + \frac 2 5 \int_{-\ell}^\ell V (\partial_x^2 (U^2 \partial_x^2 V)) \, \d x - \frac{2}{5} \int_{-\ell}^\ell V (\partial_x (\Phi \partial_x V)) \, \d x \nonumber \\
&= \int_{-\ell}^\ell U^{-1} (\partial_x^2 (U^2 \partial_x^2 V))^2 \, \d x - 2 \int_{-\ell}^\ell U^{-1} (\partial_x (\Phi \partial_x V)) (\partial_x^2 (U^2 \partial_x^2 V)) \, \d x + \int_{-\ell}^\ell U^{-1} (\partial_x (\Phi \partial_x V))^2 \, \d x \nonumber \\
& \phantom{=}  + \frac 2 5 \int_{-\ell}^\ell U^2 (\partial_x^2 V)^2 \, \d x + \frac{2}{5} \int_{-\ell}^\ell \Phi (\partial_x V)^2 \, \d x + \frac{1}{25} \int_{-\ell}^\ell U V^2 \, \d x. \nonumber
\end{align}
With help of Young's inequality it follows
\begin{align*}
|\cL V|_U^2 &\ge \eps \int_{-\ell}^\ell U^{-1} (\partial_x^2 (U^2 \partial_x^2 V))^2 \, \d x - \frac{\eps}{1-\eps} \int_{-\ell}^\ell U^{-1} (\partial_x (\Phi \partial_x V))^2 \, \d x \nonumber \\
& \phantom{=}  + \frac 2 5 \int_{-\ell}^\ell U^2 (\partial_x^2 V)^2 \, \d x + \frac{2}{5} \int_{-\ell}^\ell \Phi (\partial_x V)^2 \, \d x + \frac{1}{25} \int_{-\ell}^\ell U V^2 \, \d x
\end{align*}
for $0 < \eps < 1$. For $\eps \ll 1$ Lemma~\ref{lem-bounds-uphi} and Lemma~\ref{lem-hardy-lx} entail
\begin{align*}
\frac{\eps}{1-\eps} \int_{-\ell}^\ell U^{-1} (\partial_x (\Phi \partial_x V))^2 \, \d x \; &\lesssim \; \eps \int_{-\ell}^\ell U^{-1} \Phi^2 (\partial_x^2 V)^2 \, \d x + \eps \int_{-\ell}^\ell U^{-1} (\Phi')^2 (\partial_x V)^2 \, \d x \\
&\stackrel{\mathclap{\eqref{est-u-ell^2-x^2}}}{\lesssim_\ell} \; \eps \int_{-\ell}^\ell (\ell^2-x^2)^4 (\partial_x^2 V)^2 \, \d x + \eps \int_{-\ell}^\ell (\ell^2-x^2)^2 (\partial_x V)^2 \, \d x \\
&\stackrel{\mathclap{\eqref{hardy-lx}}}{\lesssim_\ell} \; \eps \int_{-\ell}^\ell (\ell^2-x^2)^4 (\partial_x^2 V)^2 \, \d x + \eps \int_{-\ell}^\ell (\ell^2-x^2)^3 (\partial_x V)^2 \, \d x \\
&\stackrel{\mathclap{\eqref{est-u-ell^2-x^2}}}{\lesssim_\ell} \; \eps \int_{-\ell}^\ell U^2 (\partial_x^2 V)^2 \, \d x + \eps \int_{-\ell}^\ell \Phi (\partial_x V)^3 \, \d x,
\end{align*}
at which point one can absorb for $\eps \ll 1$, so that
\begin{equation}\label{est-lv-intermediate}
|\cL V|_U^2 \gtrsim_\ell \int_{-\ell}^\ell U^{-1} (\partial_x^2 (U^2 \partial_x^2 V))^2 \, \d x + |V|_2^2.
\end{equation}
Since $U^2 \partial_x^2 V = \partial_x(U^2 \partial_x^2 V) = 0$ at $x = \pm\ell$, Lemma~\ref{lem-hardy-lx} is applicable, so that using Lemma~\ref{lem-bounds-uphi} we obtain
\begin{align*}
\int_{-\ell}^\ell U (\partial_x^2 V)^2 \, \d x \; &\stackrel{\mathclap{\eqref{bounds-u}}}{\sim_\ell} \; \int_{-\ell}^\ell (\ell^2-x^2)^{-6} (U^2 \partial_x^2 V)^2 \, \d x \\
&\stackrel{\mathclap{\eqref{hardy-lx}}}{\lesssim_\ell} \; \int_{-\ell}^\ell (\ell^2-x^2)^{-2} (U^2 \partial_x^2 V)^2 \, \d x + \int_{-\ell}^\ell (\ell^2-x^2)^{-4} (\partial_x(U^2 \partial_x^2 V))^2 \, \d x \\
&\stackrel{\mathclap{\eqref{est-lv-intermediate}}}{\lesssim_\ell} \; |\cL V|_U^2 + \int_{-\ell}^\ell U^{-2} (\partial_x (U^2 \partial_x^2 V))^2 \, \d x, \\
\int_{-\ell}^\ell U^{-2} (\partial_x (U^2 \partial_x^2 V))^2 \, \d x \; &\stackrel{\mathclap{\eqref{bounds-u}}}{\sim_\ell} \; \int_{-\ell}^\ell (\ell^2-x^2)^{-4} (\partial_x (U^2 \partial_x^2 V))^2 \, \d x \\
&\stackrel{\mathclap{\eqref{hardy-lx}}}{\lesssim_\ell} \; \int_{-\ell}^\ell (\ell^2-x^2)^{-2} (\partial_x (U^2 \partial_x^2 V))^2 \, \d x + \int_{-\ell}^\ell (\ell^2-x^2)^{-2} (\partial_x^2 (U^2 \partial_x^2 V))^2 \, \d x \\
&\stackrel{\mathclap{\eqref{est-lv-intermediate}}}{\lesssim_\ell} \; [V]_3^2 + |\cL V|_U^2.
\end{align*}
This implies with Lemma~\ref{lem-bounds-uphi} and using interpolation according to Lemma~\ref{lem-interp},
\begin{align*}
[V]_4^2 \; &\stackrel{\mathclap{\eqref{est-u-ell^2-x^2}}}{\lesssim_\ell} \; \int_{-\ell}^\ell U^{-1} (\partial_x^2 (U^2 \partial_x^2 V))^2 \, \d x + \int_{-\ell}^\ell U^{-2} (\partial_x (U^2 \partial_x^2 V))^2 \, \d x + \int_{-\ell}^\ell U (\partial_x^2 V)^2 \, \d x \\
&\stackrel{\mathclap{\eqref{est-lv-intermediate}}}{\lesssim_\ell} \;  [V]_3^2 + |\cL V|_U^2 \\
&\stackrel{\mathclap{\eqref{interp-ineq}}}{\lesssim_\ell} \; [V]_2^2 + [V]_2 [V]_4 + |\cL V|_U^2,
\end{align*}
so that after applying Young's inequality and using \eqref{est-lv-intermediate} once more,
\[
|V|_2 + [V]_4 \lesssim_\ell |\cL V|_U.
\]
Utilizing the interpolation inequality \eqref{interp-ineq} of Lemma~\ref{lem-interp} to obtain control on $[V]_3$ then concludes the proof of \eqref{est-l} for $k = 0$.

\medskip

Now assume that \eqref{est-l} is valid for $k \in \N_0$. We use Lemma~\ref{lem-der-l}, Lemma~\ref{lem-bounds-uphi}, and Lemma~\ref{lem-hardy-lx}, to deduce for the principal part of $\partial_x^{k+1} \cL$
\begin{align}\nonumber
\int_{-\ell}^\ell (\ell^2-x^2)^{-k-3} \big(\partial_x^2 (U^{\frac{k+5}{2}} \partial_x^{k+3} V)\big)^2 \, \d x \quad &\stackrel{\mathclap{\eqref{est-u-ell^2-x^2}}}{\lesssim_\ell} \quad \int_{-\ell}^\ell (\ell^2-x^2)^{k+3} \big(U^{-\frac{k+3}{2}} \partial_x^2 (U^{\frac{k+5}{2}} \partial_x^{k+3} V)\big)^2 \, \d x \\
&\stackrel{\mathclap{\eqref{derivatives-l-2},\eqref{hardy-lx}}}{\lesssim_{k,\ell}} \quad [\cL V]_{k+1}^2 + \sum_{j = \min\{k+1,2\}}^{k+3} [V]_j^2 \nonumber \\
&\stackrel{\mathclap{\eqref{est-l}}}{\lesssim_\ell} \quad |\cL V|_{k+1}^2. \label{higher-intermediate}
\end{align}
Now we use Lemma~\ref{lem-bounds-uphi} and Lemma~\ref{lem-hardy-lx}, resulting in
\begin{align*}
\int_{-\ell}^\ell (\ell^2-x^2)^{-k-7} \big(U^{\frac{k+5}{2}} \partial_x^{k+3} V\big)^2 \, \d x \quad &\stackrel{\mathclap{\eqref{est-u-ell^2-x^2},\eqref{hardy-lx}}}{\lesssim_\ell} \quad [V]_{k+3}^2 + \int_{-\ell}^\ell (\ell^2-x^2)^{-k-5} \big(\partial_x(U^{\frac{k+5}{2}} \partial_x^{k+3} V)\big)^2 \, \d x \\
&\stackrel{\mathclap{\eqref{est-l}}}{\lesssim_\ell} \quad |\cL V|_k^2 + \int_{-\ell}^\ell (\ell^2-x^2)^{-k-5} \big(\partial_x(U^{\frac{k+5}{2}} \partial_x^{k+3} V)\big)^2 \, \d x, \\
\int_{-\ell}^\ell (\ell^2-x^2)^{-k-5} \big(\partial_x(U^{\frac{k+5}{2}} \partial_x^{k+3} V)\big)^2 \, \d x \quad &\stackrel{\mathclap{\eqref{est-u-ell^2-x^2},\eqref{hardy-lx}}}{\lesssim_\ell} \quad [V]_{k+4}^2 + \int_{-\ell}^\ell (\ell^2-x^2)^{-k-3} \big(\partial_x^2 (U^{\frac{k+5}{2}} \partial_x^{k+3} V)\big)^2 \, \d x \\
&\stackrel{\mathclap{\eqref{higher-intermediate}}}{\lesssim_\ell} \quad |\cL V|_{k+1}^2.
\end{align*}
This implies with emma~\ref{lem-bounds-uphi}
\begin{align*}
[V]_{k+5}^2 \; &\stackrel{\mathclap{\eqref{est-u-ell^2-x^2}}}{\lesssim_\ell} \; \int_{-\ell}^\ell (\ell^2-x^2)^{-k-3} \big(\partial_x^2 (U^{\frac{k+5}{2}} \partial_x^{k+3} V)\big)^2 \, \d x + \int_{-\ell}^\ell (\ell^2-x^2)^{-k-5} \big(\partial_x(U^{\frac{k+5}{2}} \partial_x^{k+3} V)\big)^2 \, \d x \\
&\phantom{\lesssim_\ell} \; + \int_{-\ell}^\ell (\ell^2-x^2)^{-k-7} \big(U^{\frac{k+5}{2}} \partial_x^{k+3} V\big)^2 \, \d x \\
&\stackrel{\mathclap{\eqref{higher-intermediate}}}{\lesssim_{k,\ell}} \; |\cL V|_{k+1}^2.
\end{align*}
Hence, the elliptic estimate \eqref{est-l} follows by complete induction over $k \in \N_0$.
\end{proof}
\subsection{Resolvent estimates}\label{sec-resolvent}
Now, consider the corresponding resolvent problem
\begin{equation}\label{resolvent}
\lambda V + \cL V = \lambda V^{(0)} + F \quad \text{in } (-\ell,\ell),
\end{equation}
for given functions $V^{(0)}, F \colon (-\ell,\ell) \to \R$, where $\frac 1 \lambda > 0$ is a fixed time step.

\begin{lemma}[Resolvent estimate]\label{lem-resolvent}
For $k \in \N_0$, $0 < \lambda < \infty$, $F \in \cH^{2k}$, and $V^{(0)} \in \cH^{2k+2}$, we have a unique solution $u \in \cH^{2k+4}$ of \eqref{resolvent} satisfying the resolvent estimate
\begin{subequations}\label{resolvent-est}
\begin{align}
k \text{ is even} &\colon \quad  \lambda |\cL^{\frac k 2} V|_{U,2}^2 + |\cL^{\frac k 2 + 1} V|_U^2 \le \lambda |\cL^{\frac k 2} V^{(0)}|_{U,2}^2 + |\cL^{\frac k 2} F|_U^2, \\
k \text{ is odd} &\colon \quad \lambda |\cL^{\frac{k+1}{2}} V|_U^2 + |\cL^{\frac{k+1}{2}} V|_{U,2}^2 \le \lambda |\cL^{\frac{k+1}{2}} V^{(0)}|_U^2 + |\cL^{\frac{k-1}{2}} F|_{U,2}^2,
\end{align} 
\end{subequations}
where $|\cdot|_{U,2}$ was introduced in \eqref{coercivity-l-2-2}.
\end{lemma}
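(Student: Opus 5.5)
Existence and uniqueness come from the Lax--Milgram theorem applied to the bilinear form $a_\lambda(V,W) \coloneqq \lambda (V,W)_U + (V,W)_{U,2}$ on the Hilbert space $\cH^2$. Higher regularity then comes from Lemma~\ref{lem-elliptic}. The estimates \eqref{resolvent-est} follow by testing \eqref{resolvent} with suitable powers of $\cL$ in $(\cdot,\cdot)_U$, using the symmetry \eqref{coercivity-l-2-1} of Lemma~\ref{lem-coerc}.

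\emph{Existence and uniqueness.} By \eqref{coercivity-l-2-2} and Lemma~\ref{lem-bounds-uphi}, $|\cdot|_{U,2}$ is equivalent to $|\cdot|_2$. Hence $a_\lambda$ is bounded and coercive on $\cH^2$. The right-hand side $W \mapsto (\lambda V^{(0)}+F,W)_U$ is a bounded functional, since $|W|_U \lesssim |W|_0$ by \eqref{bounds-u}. Lax--Milgram therefore gives a unique weak solution $V \in \cH^2$. This gives uniqueness in $\cH^{2k+4}$ as well, since any strong solution is also a weak solution by \eqref{coercivity-l-2-1}.

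\emph{Regularity.} I would show that $V \in \cH^4$ with $\cL V = \lambda(V^{(0)}-V)+F$. One route is a Galerkin or approximation argument: approximate $V^{(0)}$ and $F$ by smooth functions, and solve the ODE boundary value problem, which is degenerate at $x=\pm\ell$ but admits smooth solutions. Lemma~\ref{lem-elliptic}, which gives $|V|_{k+4}\lesssim |\cL V|_k$, together with the a-priori bounds below, then allows passage to the limit. Iterating Lemma~\ref{lem-elliptic} yields $V \in \cH^{2k+4}$ whenever $F \in \cH^{2k}$ and $V^{(0)} \in \cH^{2k+2}$. I expect this step to be the main obstacle: one must justify that weak solutions are regular up to the degenerate boundary. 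I would first try the approximation route, so that all estimates are a priori and pass to the limit.

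\emph{Estimates.} The estimates are a priori computations, carried out for smooth $V$ and then extended by density. Apply $\cL^{j}$ to \eqref{resolvent}; this commutes, so $W \coloneqq \cL^j V$ solves $\lambda W + \cL W = \lambda \cL^j V^{(0)} + \cL^j F$.
\begin{itemize}
\item For even $k$, take $j = k/2$ and test with $\cL W$ in $(\cdot,\cdot)_U$. By \eqref{coercivity-l-2-1} we have $(W,\cL W)_U = |W|_{U,2}^2$. Cauchy--Schwarz in $(\cdot,\cdot)_{U,2}$ gives $\lambda(\cL^j V^{(0)},\cL W)_U \le \frac\lambda2(|\cL^jV^{(0)}|_{U,2}^2+|W|_{U,2}^2)$, and Cauchy--Schwarz in $(\cdot,\cdot)_U$ gives $(\cL^jF,\cL W)_U\le \frac12(|\cL^jF|_U^2+|\cL W|_U^2)$. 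Absorbing and multiplying by $2$ yields the claimed inequality with constant one.
\item For odd $k$, set $j=(k-1)/2$ and test with $\cL^2 W$, that is, consider $(\cL W,\cdot)$ paired through $\cL$. Then $\lambda (W,\cL^2W)_U = \lambda|\cL W|_U^2$ and $(\cL W,\cL^2 W)_U = |\cL W|_{U,2}^2$. The data terms are treated the same way: $\lambda(\cL^jV^{(0)},\cL^2W)_U=\lambda(\cL^{j+1}V^{(0)},\cL W)_U \le \frac\lambda2(|\cL^{j+1}V^{(0)}|_U^2+|\cL W|_U^2)$, and $(\cL^jF,\cL^2W)_U=(\cL^jF,\cL W)_{U,2}\le\frac12(|\cL^jF|_{U,2}^2+|\cL W|_{U,2}^2)$. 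This gives the odd case.
\end{itemize}

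Finiteness of all quantities follows from Lemma~\ref{lem-elliptic}. The terms $|\cL^{j}F|_U$ and $|\cL^j F|_{U,2}$ are controlled by $|F|_{2k}$. The terms involving $V^{(0)}$ are controlled by $|V^{(0)}|_{2k+2}$. The symmetry in \eqref{coercivity-l-2-1} is only needed on $\cH^4$, which holds by the regularity step.
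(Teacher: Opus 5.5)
\textbf{There is a genuine gap: existence of a solution in $\cH^{2k+4}$ is never established.}

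Several parts of your proposal are sound: the Lax--Milgram argument for $a_\lambda$ on $\cH^2$, the uniqueness argument, and the tested identities. The tested identities are exactly $B_{\lambda,k}[V,V]=G_{\lambda,k}[V]$, from which the paper obtains \eqref{resolvent-est} by Young's inequality, with the same constants.

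The gap is the step you flag yourself. Nothing in the proposal shows that the weak solution $V\in\cH^2$ lies in $\cH^4$, let alone $\cH^{2k+4}$, or that it solves \eqref{resolvent} strongly. None of the routes you mention closes this.
\begin{itemize}
\item Lemma~\ref{lem-elliptic} is an \emph{a priori} estimate for functions already known to lie in $\cH^{k+4}$, so ``iterating'' it produces no regularity.
\item Galerkin approximants for $a_\lambda$ do not satisfy the strong equation, so Lemma~\ref{lem-elliptic} gives no uniform $\cH^4$ bounds for them.
\item The approximation route assumes that the degenerate boundary-value problem has regular solutions for smooth data, which is the statement to be proved.
\end{itemize}
This is not a formality. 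Near $x=\ell$ the principal part is $\approx a(\ell-x)^{-2}\partial_x^2\big((\ell-x)^4\partial_x^2\,\cdot\,\big)$ with $a=\frac12 U''(\ell)>0$. Its indicial roots are $1,0,-1,-2$. You therefore must show that the solution you select avoids the singular branches $(\ell-x)^{-1}$ and $(\ell-x)^{-2}$; the first of these even lies in $\cH^0$.

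\textbf{How the paper proceeds, and how to close your gap.} The paper avoids regularity of an $\cH^2$ solution by applying Lax--Milgram directly in $\cH^{2k+4}$ to $B_{\lambda,k}$, the polarization of your tested identity. Coercivity there is exactly Lemma~\ref{lem-coerc} combined with Lemma~\ref{lem-elliptic}, so $V\in\cH^{2k+4}$ comes for free. The equation \eqref{resolvent} is then recovered from $(\lambda V+\cL V-\lambda V^{(0)}-F,\cL^{k+1}W)_U=0$ for all smooth $W$, via density of the range of $\cL^{k+1}$. Be aware that this density has the same analytic content as the regularity you are missing, and the paper dispatches it in one sentence.

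In your route the missing step can be supplied directly.
\begin{itemize}
\item Set $M\coloneqq U^2\partial_x^2 V$. The weak equation gives $\partial_x^2 M=U\big(\lambda V^{(0)}+F-\lambda V-\tfrac V5\big)+\partial_x(\Phi\partial_x V)\in L^1(-\ell,\ell)$, so $M\in C^1([-\ell,\ell])$.
\item Since $\int_{-\ell}^\ell(\ell^2-x^2)^{-4}M^2\,\d x\lesssim|V|_2^2<\infty$, this forces $M=\partial_x M=0$ at $x=\pm\ell$.
\item Lemma~\ref{lem-hardy-lx} with $\beta<-1$ then gives $[V]_3+[V]_4<\infty$, exactly as in the proof of Lemma~\ref{lem-elliptic}.
\item The higher levels, up to $\cH^{2k+4}$, follow the same way from the principal part in \eqref{derivatives-l-2}, bootstrapping through $\cL V=\lambda(V^{(0)}-V)+F$.
\end{itemize}
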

\begin{proof}
We define the bilinear forms
\begin{align*}
B_{\lambda,k} \colon & \cH^{2k+4} \times \cH^{2k+4} \to \R, \\
& (V,W) \mapsto B_{\lambda,k}[V,W] \coloneqq \begin{cases} \lambda (\cL^{\frac k 2} V,\cL^{\frac k 2} W)_{U,2} + (\cL^{\frac {k+2} 2} V,\cL^{\frac {k+2} 2} W)_U, & k \text{ is even}, \\
\lambda (\cL^{\frac{k+1}{2}} V,\cL^{\frac{k+1}{2}} W)_U + (\cL^{\frac{k+1}{2}} V
,\cL^{\frac{k+1}{2}} W)_{U,2}, & k \text{ is odd},
\end{cases}
\end{align*}
where
\[
(V,W)_{U,2} \stackrel{\eqref{coercivity-l-2-1}}{=} \frac 1 5 (V,W)_U + (\sqrt\Phi \partial_x V,\sqrt\Phi \partial_x W)_{L^2(-\ell,\ell)} + (U \partial_x^2 V,U \partial_x^2 V)_{L^2(-\ell,\ell)}.
\]
From the expression \eqref{derivatives-l} of Lemma~\ref{lem-der-l} of (derivatives of) $\cL$ and using Hardy's inequality close to the boundaries $x = \pm \ell$ (see \eqref{hardy-lx} of Lemma~\ref{lem-hardy-lx}), we infer that $B_{\lambda,k}$ is well-defined and continuous. Furthermore, for $V \in \cH^{2k+4}$ it holds, using \eqref{est-u-ell^2-x^2} of Lemma~\ref{lem-bounds-uphi} and \eqref{est-l} of Lemma~\ref{lem-elliptic},
\begin{align*}
k \text{ is even} \colon& \quad B_{\lambda,k}[V,V] \ge |\cL^{\frac{k+2}{2}} V|_U^2 \gtrsim_\ell |\cL^{\frac k 2} V|_4^2 \gtrsim \ldots \gtrsim_{k,\ell} |V|_{2k+4}, \\
k \text{ is odd} \colon& \quad B_{\lambda,k}[V,V] \ge |\cL^{\frac{k+1}{2}}V|_{U,2}^2 \gtrsim_\ell |\cL^{\frac{k+1}{2}}V|_2^2 \gtrsim_\ell \ldots \gtrsim_{k,\ell} |V|_{2k+4}^2,
\end{align*}
establishing coercivity of $B_k$.

\medskip

Now define
\begin{align*}
G_{\lambda,k} \colon \cH^{2k+4} \to \R, \quad W \mapsto G_{\lambda,k}[W] \coloneqq \begin{cases} \lambda (\cL^{\frac k 2} V^{(0)}, \cL^{\frac k 2} W)_{U,2} + (\cL^{\frac k 2} F, \cL^{\frac {k+2} 2} W)_U, & k \text{ is even}, \\
\lambda (\cL^{\frac{k+1} 2} V^{(0)}, \cL^{\frac{k+1} 2} W)_U + (\cL^{\frac{k-1} 2} F, \cL^{\frac{k+1} 2} W)_{U,2}, & k \text{ is odd}. \end{cases}
\end{align*}
From \eqref{derivatives-l} of Lemma~\ref{lem-der-l}, \eqref{est-u-ell^2-x^2} of Lemma~\ref{lem-bounds-uphi}, and \eqref{hardy-lx} of Lemma~\ref{lem-hardy-lx} one recognizes that $G_{\lambda,k}$ is continuous. Hence, by the Lax-Milgram theorem, there exists a unique solution $V \in \cH^{2k+4}$ to
\begin{equation}\label{bilinear-eq}
B_{\lambda,k}[V,W] = G_{\lambda,k}[W] \quad \text{for all } W \in \cH^{2k+4}.
\end{equation}
By symmetry \eqref{coercivity-l-2-1} stated in Lemma~\ref{lem-coerc}, we infer
\begin{align*}
\big(\lambda V + \cL V - \lambda V^{(0)} - F, \cL^{k+1} W\big)_U = 0 \quad \text{for all } W \in C^\infty([-\ell,\ell]).
\end{align*}
In order to conclude that \eqref{resolvent} is satisfied, it suffices to solve $\cL^{k+1} W = \tilde W$ for $\tilde W \in L^2_U$ and $W \in \cH^{4k+4}$. The latter can be achieved as before by iteratively using the Lax-Milgram theorem.

\medskip

As a last step, we evaluate \eqref{bilinear-eq} at $W = V$ and obtain
\begin{align*}
k \text{ is even} &\colon \quad  \lambda |\cL^{\frac k 2} V|_{U,2}^2 + |\cL^{\frac{k+2}{2}} V|_U^2 = \lambda (\cL^{\frac k 2} V^{(0)},\cL^{\frac k 2} V)_{U,2} + (\cL^{\frac k 2} F,\cL^{\frac {k+2} 2} V)_U, \\
k \text{ is odd} &\colon \quad  \lambda |\cL^{\frac{k+1}{2}} V|_U^2 + |\cL^{\frac{k+1}{2}} V|_{U,2}^2 = \lambda (\cL^{\frac{k+1}{2}} V^{(0)},\cL^{\frac{k+1}{2}} V)_U + (\cL^{\frac{k-1}{2}} F, \cL^{\frac{k+1}{2}} V)_{U,2},
\end{align*} 
so that with Young's inequality we arrive at \eqref{resolvent-est}.
\end{proof}
\subsection{Parabolic estimates}\label{sec-parabolic}
Next, we turn to the time-discretized version of \eqref{lin-pde} and are able to prove:
\begin{lemma}[Time-discretized estimates]\label{lem-time-discrete}
Given $k \in \N_0$, $0 < h \le 1$, a sequence $(F_j)_{j \in \N_0} \in (\cH^{4k})^{\N_0}$, and $V^{(0)} \in \cH^{2k+2}$, there exists a sequence $(V_j)_{j \in \N_0} \in (\cH^{4k+2})^{\N_0}$ solving
\begin{equation}\label{lin-pde-discrete}
\frac 1 h (V_j - V_{j-1}) + \cL V_j = F_{j-1} \quad \text{for } j \in \N,
\end{equation}
and satisfying the following {\it \`a-priori} estimates for $j \ge 0$,
\begin{align}
(j h)^{k-2} |V_j|_{2k+2}^2 + \sum_{j' = 1}^j h (j' h)^{k-2} |V_{j'}|_{2k+4}^2 &\lesssim_{k,\ell} \sum_{j' = 0}^{j-1} h (j h)^{k-2} |F_{j'}|_{2k}^2 \nonumber \\
&\phantom{\lesssim_{k,\ell}} + (k-2) \sum_{j' = 1}^j h ((j'+1) h)^{k-3} |V_{j'}|_{2k+2}^2 && \text{for } k \ge 3. \label{a-priori-discrete}
\end{align}
\end{lemma}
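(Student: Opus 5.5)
Existence of the sequence $(V_j)$ follows by iterating the resolvent problem. Multiplying \eqref{lin-pde-discrete} by $h$ shows that each step is \eqref{resolvent} with $\lambda = \frac 1 h$, datum $V_{j-1}$ and right-hand side $F_{j-1}$. Lemma~\ref{lem-resolvent} therefore yields $V_j \in \cH^{2k+4}$ inductively, given $V_{j-1} \in \cH^{2k+2}$ and $F_{j-1} \in \cH^{2k}$. Since $F_j \in \cH^{4k} \subset \cH^{2k}$, the regularity is propagated at every step. The main work is the weighted \emph{à-priori} estimate \eqref{a-priori-discrete}.

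For that estimate, I would use the resolvent estimate \eqref{resolvent-est} with $\lambda = \frac 1 h$ and multiply it by $h$. I would work with the quantities
\[
a_j \coloneqq |\cL^{\frac k2}V_j|_{U,2}^2 \ (k \text{ even}), \qquad a_j \coloneqq |\cL^{\frac{k+1}{2}} V_j|_U^2 \ (k \text{ odd}),
\]
together with the corresponding dissipation $b_j$, given by $|\cL^{\frac k2+1}V_j|_U^2$ resp.\ $|\cL^{\frac{k+1}{2}}V_j|_{U,2}^2$. By Lemma~\ref{lem-coerc} and the elliptic estimate of Lemma~\ref{lem-elliptic}, applied iteratively, we have $a_j \sim_{k,\ell} |V_j|_{2k+2}^2$ and $b_j \sim_{k,\ell} |V_j|_{2k+4}^2$. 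The forcing term satisfies $|\cL^{\frac k2}F|_U^2 \lesssim |F|_{2k}^2$ (resp.\ $|\cL^{\frac{k-1}{2}}F|_{U,2}^2 \lesssim |F|_{2k}^2$). The resolvent estimate then reads
\[
a_j + h b_j \le a_{j-1} + h C|F_{j-1}|_{2k}^2 .
\]
This is the discrete energy inequality.

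Next I would multiply this inequality by the time weight $(jh)^{k-2}$ and sum over $j$, as a discrete analogue of differentiating $s^{k-2}a(s)$. The term $(jh)^{k-2}(a_j - a_{j-1})$ is rewritten as
\[
(jh)^{k-2}a_j - ((j-1)h)^{k-2}a_{j-1} - \big[(jh)^{k-2} - ((j-1)h)^{k-2}\big]a_{j-1}.
\]
The first two pieces telescope. By the mean value theorem, the bracket is at most $(k-2)h(jh)^{k-3}$ for $k\ge 3$. Using $a_{j-1} \sim |V_{j-1}|_{2k+2}^2$, the resulting commutator term is exactly of the form $(k-2)\sum h((j'+1)h)^{k-3}|V_{j'}|_{2k+2}^2$ after shifting the index. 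The $j'=0$ contribution carries the factor $0^{k-3}$, or is handled as part of the initial term when $k=3$; since the right-hand side of \eqref{a-priori-discrete} contains no initial-datum term, I would check that the weight vanishes at $j=0$ for $k\ge3$. The forcing is bounded by $(j'h)^{k-2}\le (jh)^{k-2}$, which gives the first sum on the right-hand side. The dissipation gives $\sum h (j'h)^{k-2} |V_{j'}|_{2k+4}^2$ on the left, and the last telescoped term gives $(jh)^{k-2}|V_j|_{2k+2}^2$.

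The main obstacle is the bookkeeping of the discrete time weights. The mean-value bound has to produce exactly the $((j'+1)h)^{k-3}$ weight, which for $k=3$ is the constant weight. The equivalence constants between the $\cL$-based norms and $|\cdot|_{2k+2}$ and $|\cdot|_{2k+4}$ have to stay uniform in $h \le 1$ and $j$. This uniformity holds because the equivalences are purely elliptic (Lemma~\ref{lem-elliptic}) and do not depend on $\lambda$. The commutator term is left on the right-hand side, to be controlled later by induction in $k$ using the lower-order estimate.
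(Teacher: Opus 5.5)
Your route is the paper's: each step is the resolvent problem with $\lambda = h^{-1}$, you multiply \eqref{resolvent-est} by $(jh)^{k-2}$, telescope, and bound the weight increments by $(k-2)h(jh)^{k-3}$. Your handling of the forcing, of the dissipation, and of the commutator summands with index $j' \ge 1$ is fine.

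The step that fails is your claim about the index $j' = 0$. The telescoping removes only $0^{k-2}a_0 = 0$. The commutator at $j=1$, however, is $\big(h^{k-2}-0^{k-2}\big)a_0 = h^{k-2}a_0$. After the index shift, the commutator is therefore bounded by $(k-2)\sum_{j'=0}^{j-1} h((j'+1)h)^{k-3} a_{j'}$, and its $j'=0$ summand carries the weight $h^{k-3} \neq 0$, not $0^{k-3}$. A term of size $h^{k-2}|V^{(0)}|_{2k+2}^2$ survives. For $k \ge 3$ the right-hand side of \eqref{a-priori-discrete} has no datum term into which it could be absorbed, so the check you propose would fail.

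This term cannot be avoided, because \eqref{a-priori-discrete} as literally stated (commutator summed over $j' \ge 1$ only) is false. Take $F \equiv 0$, $j = 1$, and $V^{(0)} \coloneqq W + h\cL W$ with $W \in C^\infty([-\ell,\ell])$, so that $V_1 = W$ by uniqueness in Lemma~\ref{lem-resolvent}. The left-hand side then contains $h^{k-1}|W|_{2k+4}^2$. The right-hand side is only $\lesssim_{k,\ell} h^{k-2}|W|_{2k+2}^2$. For fixed $h$ the ratio $h|W|_{2k+4}^2/|W|_{2k+2}^2$ is unbounded, e.g.\ for $W = \chi \sin(nx)$ with $\chi \in C^\infty_{\mathrm c}((-\ell,\ell))$ and $n \to \infty$.

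What your argument actually proves is \eqref{a-priori-discrete} with the commutator summed over $j' = 0, \ldots, j-1$. Equivalently, the right-hand side carries an extra term $\lesssim_{k,\ell} h^{k-2}|V^{(0)}|_{2k+2}^2$. The paper's own displayed computation gives the same, since its commutator sum also runs over $j' = 0, \ldots, j-1$; the index range in the statement appears to be a slip. You should state the lemma that way. Note that this extra term is not controlled by $|V^{(0)}|_6$, so Proposition~\ref{prop-maxreg} must dispose of it separately. One way is to first take $V^{(0)}$ smooth, so the term vanishes as $h \searrow 0$, and then conclude by density in $\cH^6$.
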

\begin{proof}
Notice that \eqref{lin-pde-discrete} is equivalent to \eqref{resolvent} on setting $\lambda \coloneqq h^{-1}$. This ensures existence of the sequence $(V_j)_{j \in \N_0}$ by iteratively applying Lemma~\ref{lem-resolvent}.

\medskip

In order to prove \eqref{a-priori-discrete} 
for $k \ge 3$, we first multiply estimates~\eqref{resolvent-est} with $(jh)^{k-2}$ before summing and obtain
\begin{align*}
(jh)^{k-2} |\cL^{\frac k 2} V_j|_{U,2}^2 + \sum_{j' = 1}^j h (j'h)^{k-2} |\cL^{\frac k 2 + 1} V|_U^2 &\le \sum_{j' = 0}^{j-1} h (j'h)^{k-2} |\cL^{\frac k 2} F_j|_U^2 \\
&\phantom{\le} + \sum_{j' = 0}^{j-1} (((j'+1)h)^{k-2}-(j'h)^{k-2}) |\cL^{\frac k 2} F_{j'}|_U^2
\end{align*}
if $k$ is even and
\begin{align*}
(jh)^{k-2} |\cL^{\frac{k+1}{2}} V_j|_U^2 + \sum_{j' = 1}^j h (j'h)^{k-2} |\cL^{\frac{k+1}{2}} V_j|_{U,2}^2 &\le \sum_{j' = 0}^{j-1} h (j'h)^{k-2}  |\cL^{\frac{k-1}{2}} F_j|_{U,2}^2 \\
&\phantom{\le} + \sum_{j' = 0}^{j-1} (((j'+1)h)^{k-2}-(j'h)^{k-2}) |\cL^{\frac {k-1} 2} F_{j'}|_{U,2}^2.
\end{align*} 
if $k$ is odd. We then notice that
\[
((j'+1)h)^{k-2}-(j'h)^{k-2} \le (k-2) ((j'+1)h)^{k-3} h,
\]
which entails with help of estimates~\eqref{est-u-ell^2-x^2} of Lemma~\ref{lem-bounds-uphi}, estimate~\eqref{coercivity-l-2-2} of Lemma~\ref{lem-coerc}, and estimate~\eqref{est-l} of Lemma~\ref{lem-elliptic}, that \eqref{a-priori-discrete} 
is valid.
\end{proof}
\begin{proposition}[Maximal regularity of the linear evolution]\label{prop-maxreg}
Suppose $k \in \N$ with $k \ge 2$,
\[
F \in L^2_\mathrm{loc}((0,\infty);\cH^{2k}) \quad \text{such that} \quad (t \mapsto t^{\frac{k'-2}{2}} F(t)) \in L^2(0,\infty;\cH^{2k'}) \quad \text{for} \quad k' \in \{2,\ldots,k\},
\]
and $V^{(0)} \in \cH^6$. Then there exist a unique solution
\[
V \in H^1_\mathrm{loc}((0,\infty);\cH^{2k}) \cap L^2_\mathrm{loc}((0,\infty);\cH^{2k+4}) \cap C^0((0,\infty); \cH^{2k+2}),
\]
such that
\begin{subequations}\label{regularity-weighted}
\begin{align}
(s \mapsto s^{\frac{k'-2}{2}} \partial_s^m V(s)) &\in L^2(0,\infty;\cH^{2k'+4-4m}) && \text{for} \quad k' \in \{2,\ldots,k\}, \quad m \in \{0,1\}, \\
(s \mapsto s^{\frac{k'-2}{2}} V(s)) &\in C^0([0,\infty);\cH^{2k'+2}) && \text{for} \quad k' \in \{2,\ldots,k\},
\end{align}
\end{subequations}
to \eqref{lin-pde}. This solution satisfies the {\it \`a-priori} estimate
\begin{align}\nonumber
& \sum_{k' = 2}^k \Big(s^{k'-2} |V(s)|_{2k'+2}^2 + \int_0^s (s')^{k'-2} \big(|(\partial_s V)(s')|_{2k'}^2 + |V(s')|_{2k'+4}^2\big) \, \d s'\Big) \\
& \quad \lesssim_{k,\ell} |V^{(0)}|_6^2 + \sum_{k' = 2}^k \int_0^s (s')^{k'-2} |F(s')|_{2k'}^2 \, \d s'. \label{a-priori-lin}
\end{align}
\end{proposition}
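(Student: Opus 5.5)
We construct the solution by the implicit Euler (Rothe) scheme of Lemma~\ref{lem-time-discrete}, derive $h$-uniform time-weighted bounds from \eqref{a-priori-discrete} by induction over $k'$, and pass to the limit $h \searrow 0$.

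\textbf{Discretization.} Fix $0 < h \le 1$ and set $F_j \coloneqq h^{-1}\int_{jh}^{(j+1)h} F(s')\,\d s'$ (after extending $F$ by zero near $s=0$ if needed for the weights). By Jensen's inequality,
\[
\sum_j h (jh)^{k'-2}|F_j|_{2k'}^2 \lesssim \int_0^\infty (s')^{k'-2}|F(s')|_{2k'}^2\,\d s'
\]
for $k' \ge 2$, using $(jh)^{k'-2} \le (s')^{k'-2}$ on the interval. Lemma~\ref{lem-time-discrete} then provides $(V_j)_j$ solving \eqref{lin-pde-discrete} with $V_0 = V^{(0)}$.

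\textbf{Base case $k'=2$.} Here there is no time weight, and the resolvent estimate \eqref{resolvent-est} with $k=2$ (even) is used directly. Summing $\lambda|\cL V_j|_{U,2}^2 + |\cL^2 V_j|_U^2 \le \lambda|\cL V_{j-1}|_{U,2}^2 + |\cL F_{j-1}|_U^2$ over $j$, with $\lambda = h^{-1}$ and after multiplying by $h$, telescopes to
\[
|\cL V_j|_{U,2}^2 + \sum_{j'\le j} h|\cL^2 V_{j'}|_U^2 \le |\cL V^{(0)}|_{U,2}^2 + \sum_{j'<j} h |\cL F_{j'}|_U^2 .
\]
Lemma~\ref{lem-coerc}, Lemma~\ref{lem-elliptic}, and Lemma~\ref{lem-bounds-uphi} convert this into $|V_j|_6^2 + \sum h|V_{j'}|_8^2 \lesssim |V^{(0)}|_6^2 + \sum h|F_{j'}|_4^2$. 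This is exactly where the hypothesis $V^{(0)}\in\cH^6$ enters.

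\textbf{Induction for $k' \ge 3$.} Apply \eqref{a-priori-discrete} with index $k'$. The error term $(k'-2)\sum h((j'+1)h)^{k'-3}|V_{j'}|_{2k'+2}^2$ is controlled by the level-$(k'-1)$ dissipation $\sum h (j'h)^{k'-3}|V_{j'}|_{2(k'-1)+4}^2$, already bounded by the induction hypothesis, since $2k'+2 = 2(k'-1)+4$.

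The main obstacle is the shift $(j'+1)$ versus $j'$, which matters near $j'=0$ when $k'=3$. I would handle it with $((j'+1)h)^{k'-3} \lesssim (j'h)^{k'-3} + h^{k'-3}$. The extra piece $h^{k'-3}\sum_{j'} h |V_{j'}|_{2k'+2}^2$ is harmless for $k'=3$, where it equals the level-2 dissipation. For $k' \ge 4$ the factor $h^{k'-3} \le 1$ reduces it to a level-$(k'-1)$ sum with an unweighted initial segment $j'h \le 1$, which is handled by the previous levels on that segment.

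Summing over $k' = 2,\dots,k$ gives the discrete analogue of \eqref{a-priori-lin}. The discrete time derivative $h^{-1}(V_j - V_{j-1}) = F_{j-1} - \cL V_j$ is controlled in the weighted $\cH^{2k'}$ norms through Lemma~\ref{lem-elliptic}.

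\textbf{Limit, continuity, and uniqueness.} Let $V^h$ be the piecewise-linear and $\bar V^h$ the piecewise-constant interpolants. The uniform bounds yield weak(-$*$) limits in the weighted spaces $L^2(0,\infty;\cH^{2k'+4})$, $H^1$, and $L^\infty(\cH^{2k'+2})$. The limit solves \eqref{lin-pde} in the distributional sense because $\cL$ is linear and continuous, and \eqref{a-priori-lin} follows by weak lower semicontinuity.

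Continuity in $\cH^{2k'+2}$ follows from the standard Lions–Magenes argument. We have $s^{(k'-2)/2}V \in L^2(\cH^{2k'+4})$ and $\partial_s(s^{(k'-2)/2}V) \in L^2(\cH^{2k'})$, and $\cH^{2k'+2}$ is the midpoint space with respect to the symmetric operator $\cL$ (norms $|\cL^{\cdot}\,\cdot|_{U,2}$ or $|\cdot|_U$). The energy identity $\frac{\d}{\d s}|\cL^{k'/2}V|_{U,2}^2 = 2(\ldots)$ then gives continuity.

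Uniqueness: the difference of two solutions solves \eqref{lin-pde} with $F=0$ and zero data. Testing with $V$ in $(\cdot,\cdot)_U$ and using \eqref{coercivity-l-2} gives $\frac{\d}{\d s}|V|_U^2 \le 0$, so $V=0$.
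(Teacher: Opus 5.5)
Your overall route is the paper's: implicit Euler steps from Lemma~\ref{lem-time-discrete}, weighted summation organized inductively in $k'$, weak-$*$ compactness with lower semicontinuity, and uniqueness by testing with $V$ in $(\cdot,\cdot)_U$. You also read off the unweighted case $k'=2$ from the telescoped resolvent estimate, which the paper leaves implicit.

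The step that fails as written is continuity for $k'=3$. You use $\partial_s(s^{\frac{k'-2}{2}}V)\in L^2(0,\infty;\cH^{2k'})$. But $\partial_s(s^{\frac12}V)=s^{\frac12}\partial_s V+\frac12 s^{-\frac12}V$, and $\int_0^1 s^{-1}|V(s)|_6^2\,\d s=\infty$ whenever $V^{(0)}\neq0$, since $V\in C^0([0,\infty);\cH^6)$. For $k'\ge4$ the extra term $s^{\frac{k'-4}{2}}V$ is controlled by the level-$(k'-2)$ dissipation, so only $k'=3$ breaks. Lions--Magenes on $[\eps,\infty)$ still gives $V\in C^0((0,\infty);\cH^{8})$. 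It does not give $s^{\frac12}V(s)\to0$ in $\cH^8$ as $s\searrow0$, which \eqref{regularity-weighted} demands.

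The remedy is to differentiate the squared weight, which is what the paper's trace estimate \eqref{trace-est} does. Set $|V|_\ast\coloneqq|\cL^{k'/2}V|_{U,2}$ for even $k'$ and $|V|_\ast\coloneqq|\cL^{(k'+1)/2}V|_U$ for odd $k'$; both are $\sim|V|_{2k'+2}$. Then $\big|\frac{\d}{\d s}\big(s^{k'-2}|V|_\ast^2\big)\big|\lesssim(k'-2)s^{k'-3}|V|_{2k'+2}^2+s^{k'-2}|\partial_sV|_{2k'}|V|_{2k'+4}$. The right-hand side lies in $L^1(0,1)$ by the level-$(k'-1)$ and level-$k'$ bounds. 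Hence $s^{k'-2}|V(s)|_\ast^2$ has a limit as $s\searrow0$, and that limit is $0$ because $s^{k'-3}|V|_{2k'+2}^2\in L^1(0,1)$.

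Your treatment of the index shift in \eqref{a-priori-discrete} reaches the right conclusion for the wrong reason. For $k'=3$ there is no shift at all, since the weight is identically $1$. For $k'\ge4$ the error sum starts at $j'=1$, so simply $((j'+1)h)^{k'-3}\le2^{k'-3}(j'h)^{k'-3}$. Discarding the factor through $h^{k'-3}\le1$, as you propose, leaves an unweighted sum $\sum_{j'h\le1}h|V_{j'}|_{2k'+2}^2$. For $k'\ge4$ the lower levels do not control this uniformly in $h$, because their only bound on $|V|_{2k'+2}$ carries the weight $(j'h)^{k'-3}$. The argument closes only if you keep $h^{k'-3}\le(j'h)^{k'-3}$.

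Similarly, your Jensen bound for the averages $F_j$ holds only if each $F_{j'}$ enters the discrete estimate with the weight at the left endpoint of its averaging interval. The paper avoids the first time step altogether by taking $F\in C^\infty_\mathrm{c}((0,\infty);\cH^{2k})$ and concluding by density. Your parenthetical about modifying $F$ near $s=0$ should be turned into exactly that.
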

\begin{proof}
For $0 < h \le 1$, assume by approximation (continuous extension of the linear solution operator) $F \in C^\infty_\mathrm{c}((0,\infty);\cH^{2k})$, and define $F_j \coloneqq \frac 1 h \int_{(j-1)h}^{jh} F(s) \, \d s$ for $j \in \N_0$ and $V_j$ for $j \in \N$ by applying Lemma~\ref{lem-time-discrete}. We can then define discrete approximations
\begin{align*}
V_h(s) \coloneqq V_j, \quad F_h(s) \coloneqq F_j, \quad \text{and} \quad \tilde V_h(s) \coloneqq \frac{s-(j-1)h}{h} V_j + \frac{jh-s}{h} V_{j-1} \quad \text{for} \quad (j-1) h < s \le j h.
\end{align*}
By \eqref{lin-pde-discrete} we then have almost everywhere the PDE
\begin{equation}\label{lin-pde-approx-h}
\partial_s \tilde V_h + \cL V_h = F_h.
\end{equation}
Summing estimates \eqref{a-priori-discrete} 
in conjunction with \eqref{lin-pde-approx-h} entails
\begin{align}\nonumber
& \sum_{k' = 2}^k \Big(s^{k'-2} |V_h(s)|_{2k'+2}^2 + \int_0^s (s')^{k'-2} \big(|(\partial_s \tilde V_h)(s')|_{2k'}^2 + |V_h(s')|_{2k'+4}^2\big) \, \d s'\Big) \\
& \quad \lesssim_{k,\ell} |V^{(0)}|_6^2 + \sum_{k' = 2}^k \int_0^s (s')^{k'-2} |F_h(s')|_{2k'}^2 \, \d s', \label{a-priori-lin-h}
\end{align}
which corresponds to \eqref{a-priori-lin} on the time-discretized level.

\medskip

In order to establish convergence, observe that for $j \in \N_0$ such that $s+h \ge j h \ge s$, and assuming $0 < h \le 1$, it holds
\begin{align}\nonumber
\int_0^s (s')^{k'-2} |F_h(s')-F(s')|_{2k'}^2 \, \d s' &\le \sum_{j' = 0}^{j-1} \int_{j' h}^{(j'+1) h} (s')^{k'-2} \Big|\frac 1 h \int_{j' h}^{(j'+1)h} F(s'') \, \d s'' - F(s')\Big|_{2k'}^2 \, \d s' \\
&\le \sum_{j' = 0}^{j-1} \frac 1 h \int_{j' h}^{(j'+1)h} \int_{j' h}^{(j'+1)h} (s')^{k'-2} |F(s'') - F(s')|_{2k'}^2 \, \d s'' \, \d s' \nonumber\\
&\le (j h)^{k'-2} \sup_{0 \le s' \le jh} |(\partial_s F)(s')|_{2k'}^2 j h^3 \nonumber\\
&\le \Big((s+1)^{k'-1} \sup_{0 \le s' \le s+1} |(\partial_s F)(s')|_{2k'}^2\Big) h^2 \to 0 \quad \text{as} \quad h \searrow 0. \label{convergence-f}
\end{align}
Hence, weak-$*$ sequential compactness in \eqref{a-priori-lin-h} entails that subsequences of $(V_h)_{h > 0}$ and $(\tilde V_h)_{h > 0}$, again denoted as such, weak-$*$ converge in the left-hand side norms of \eqref{a-priori-lin-h} to locally integrable functions $V, \tilde V \colon (0,\infty) \times (-\ell,\ell) \to \R$, and \eqref{a-priori-lin} is valid by weak lower-semicontinuity of the appearing norms. For $\phi \in C^\infty_\mathrm{c}((0,\infty) \times (-\ell,\ell))$ it holds for a $j \in \N$ sufficiently large,
\begin{align*}
\int_0^\infty \int_{-\ell}^\ell U \phi (V_h-\tilde V_h) \, \d x \, \d s \; &\le \; \sum_{j' = 1}^j \int_{(j'-1) h}^{j'h} \int_{-\ell}^\ell U |\phi| |V_{j'}-V_{j'+1}| \, \d x \, \d s \\
&\stackrel{\mathclap{\eqref{lin-pde-discrete}}}{\le} \; 
h^2 \sum_{j' = 1}^j (|\phi|_U^2 + |\cL V_{j'}|_U^2 + |F_{j'-1}|_U^2) \\
&\le \; h s |\phi|_U^2 + h \int_0^s (|V_h|_4^2 + |F_h|_0^2) \, \d s',
\end{align*}
where we have used \eqref{est-u-ell^2-x^2} of Lemma~\ref{lem-bounds-uphi} and \eqref{est-l} of Lemma~\ref{lem-elliptic} in the last inequality, and have chosen $s > 0$ sufficiently large but independent of $h$. With help of \eqref{a-priori-lin-h} and \eqref{convergence-f} we infer that
\[
\int_0^\infty \int_{-\ell}^\ell U \phi (V_h-\tilde V_h) \, \d x \, \d s \to 0 \quad \text{as} \quad h \searrow 0,
\]
so that we must have $V = \tilde V$. Then passing to the limit $h \searrow 0$ in the weak formulation of \eqref{lin-pde-approx-h} (in a similar way as carried out for identifying $V = \tilde V$), we conclude that \eqref{lin-pde} is satisfied.

\medskip

In order to prove continuity, observe that for $s \ge 0$ and $h \coloneqq \frac s N$ with $N \in \N$ it holds for $W \in C^1([0,\infty);\cH^{2k'+4})$,
\begin{align*}
&s^{k'-2} |W(s)|_{2k'+2}^2 - \delta_{k',2} |W(0)|_{2k'+2}^2 \\
& \quad = \sum_{j = 1}^N \big( (jh)^{k'-2} |W(jh)|_{2k'+2}^2 - ((j-1)h)^{k'-2} |W((j-1)h)|_{2k'+2}^2\big) \\
& \quad = \sum_{j = 1}^N \big((jh)^{k'-2} - ((j-1)h)^{k'-2}\big) |W(jh)|_{2k'+2}^2 + \sum_{j = 1}^N ((j-1)h)^{k'-2} \big(|W(jh)|_{2k'+2}^2 - |W((j-1)h)|_{2k'+2}^2\big) \\
& \quad \le (k'-2) \sum_{j = 1}^N h (j h)^{k'-3} |W(jh)|_{2k'+2}^2 \\
& \quad \phantom{\le} + \sum_{j = 1}^N ((j-1) h)^{k'-2} (W(jh) - W((j-1)h), W(jh)+W((j-1)h))_{2k'+2} \\
& \quad \stackrel{\eqref{interp-ineq}}{\lesssim_{k',\ell}} (k'-2) \sum_{j = 1}^N h (j h)^{k'-3} |W(jh)|_{2k'+2}^2 \\
& \quad \phantom{\lesssim_{k',\ell}} + \sum_{j = 1}^N h ((j-1) h)^{k'-2} \Big(\Big|\frac 1 h \int_{(j-1)h}^{jh} (\partial_s W)(s') \, \d s'\Big|_{2k'}^2 + |W(jh)|_{2k'+4}^2 + |W((j-1)h)|_{2k'+4}^2 \Big)
\end{align*}
where we have used Lemma~\ref{lem-interp} in the last estimate. Passage to the limit $h \searrow 0$ entails the trace estimate
\begin{align}\nonumber
\sup_{0 \le s' \le s} (s')^{k'-2} |W(s')|_{2k'+2}^2 &\lesssim_{k',\ell} \delta_{k',2} |W(0)|_{2k'+2}^2 + (k'-2) \int_0^s (s')^{k'-3} |W(s')|_{2k'+2}^2 \, \d s' \\
&\phantom{\lesssim_{k',\ell}} + \int_0^s (s')^{k'-2} \big(|(\partial_s W)(s')|_{2k'}^2 + |W(s')|_{2k'+4}^2\big) \, \d s'. \label{trace-est}
\end{align}
By approximation, \eqref{trace-est} is also valid for $W = V$ and hence the continuity statements follow (the approximation property with continuous functions applies to the right-hand side of \eqref{trace-est}, hence also to its left-hand side).

\medskip

It remains to prove uniqueness. Therefore, note that for $V \in H^1(0,\infty;L^2_U) \cap L^2(0,\infty;\cH^4) \cap C^0([0,\infty);\cH^2)$ solving
\[
\partial_s V + \cL V = 0, \quad \text{for} \quad s > 0, \qquad \text{subject to} \qquad V = 0 \quad \text{at} \quad s = 0,
\]
we obtain by testing and with help of \eqref{coercivity-l-2} of Lemma~\ref{lem-coerc} for some $c > 0$,
\[
\frac 1 2 \frac{\d}{\d s} |V|_U^2 + c |V|_2^2 \le 0 \quad \text{for} \quad s > 0,
\]
which after integration in $s$ and using $V = 0$ at $s = 0$ leads to $|V|_U = 0$ for all $s > 0$ and thus $V = 0$ almost everywhere.
\end{proof}
\section{The nonlinear degenerate-parabolic equation}\label{sec-nonlinear}
\subsection{Estimates on the nonlinearity}\label{sec-nonlinearity}
A main ingredient of all nonlinear estimates is the following consequence of Hardy's inequality
\begin{lemma}\label{lem-c0}
We have for $n \in \N_0$, $d \in \N_0$, and $k = 2d + 4-2n$,
\begin{equation}\label{c0_embed}
\|(\ell^2-x^2)^n \partial_x^d V\|_{C^0([-\ell,\ell])} \lesssim_{d,n} |V|_k
\end{equation}
\end{lemma}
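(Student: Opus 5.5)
We first prove \eqref{c0_embed} for $V \in C^\infty([-\ell,\ell])$. The extension to $V \in \cH^k$ then follows by density, because the left-hand side is a norm on continuous functions. The idea is to write $w \coloneqq (\ell^2-x^2)^n \partial_x^d V$ and apply a one-dimensional Sobolev embedding with a weight. We then use Hardy's inequality \eqref{hardy-lx} of Lemma~\ref{lem-hardy-lx} repeatedly, each time trading one additional derivative for two powers of $(\ell^2-x^2)$. We stop once every term has the form of a seminorm $[V]_j$ with $j \le k$, where $[V]_j$ carries the weight $(\ell^2-x^2)^{j+2}$.

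\textbf{Step 1 (weighted embedding).} For any $x, x_0 \in [-\ell,\ell]$ we have $w(x)^2 \le w(x_0)^2 + 2\int_{-\ell}^\ell |w||w'| \, \d x$. We average over $x_0$ and use Young's inequality with a weight $\rho>0$:
\[
\|w\|_{C^0}^2 \lesssim_\ell \int_{-\ell}^\ell w^2 \, \d x + \int_{-\ell}^\ell \rho^{-1} w^2 \, \d x + \int_{-\ell}^\ell \rho\, (w')^2 \, \d x .
\]
For $n \ge 1$ we take $\rho = \ell^2-x^2$. Expanding
\[
w' = (\ell^2-x^2)^n \partial_x^{d+1} V - 2nx(\ell^2-x^2)^{n-1}\partial_x^d V,
\]
everything is bounded by
\[
\int_{-\ell}^\ell (\ell^2-x^2)^{2n-1} (\partial_x^d V)^2 \, \d x + \int_{-\ell}^\ell (\ell^2-x^2)^{2n+1} (\partial_x^{d+1}V)^2 \, \d x .
\]
The exponent $2n-1$ is greater than $-1$, so Lemma~\ref{lem-hardy-lx} applies. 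For $n = 0$ we take $\rho = 1$ and obtain $\int (\partial_x^dV)^2 + \int(\partial_x^{d+1}V)^2$ with exponent $0 > -1$.

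\textbf{Step 2 (Hardy iteration).} Consider a term $\int (\ell^2-x^2)^\beta (\partial_x^j V)^2 \, \d x$ with $\beta > -1$. If $\beta \ge j+2$, it is already bounded by $[V]_j^2$. Otherwise we apply \eqref{hardy-lx} with $\gamma = j+2$. This bounds the term by $[V]_j^2$ plus $\int (\ell^2-x^2)^{\beta+2}(\partial_x^{j+1}V)^2\,\d x$. Each such step raises $\beta - j$ by one, so the iteration terminates.

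We now count derivatives. Start from $(\beta,j) = (2n+1,d+1)$ for $n\ge1$, or from $(0,d+1)$ for $n=0$. After $m$ steps the condition $\beta+2m \ge j+m+2$ holds as soon as $m \ge j+2-\beta$. The highest derivative reached is therefore
\[
j + \max\{0,\, j+2-\beta\} \le 2d+4-2n = k .
\]
For $n \ge 1$ the bound is $2d+2-2n < k$. For $n=0$ it is exactly $k$, which is consistent with the scaling of $k$. The term $(2n-1,d)$ is handled in the same way.

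\textbf{Step 3 (collecting seminorms).} All terms produced so far are of the form $[V]_j$ with $j \le \max\{k,d\}$. The leftover weights are at least $j+2$ and are bounded on $[-\ell,\ell]$. Whenever $j \le k$, we have $[V]_j \le |V|_k$.

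The main obstacle is the regime where $n$ is large compared with $d$, so that $k < d$ or $k$ is close to $d$. There the leftover terms $[V]_d$ and $[V]_{d+1}$ carry far more weight than the seminorms $[V]_{j'}$ with $j' \le k$, and we must move downward in the number of derivatives instead. For this we integrate by parts, as in the proof of Lemma~\ref{lem-interp}, to pass from $(\ell^2-x^2)^\beta(\partial_x^jV)^2$ to $(\ell^2-x^2)^{\beta-2}(\partial_x^{j-1}V)^2$, using that $\beta$ is large enough for the boundary terms to vanish. We then close the estimate with the interpolation inequality \eqref{interp-ineq} of Lemma~\ref{lem-interp}.
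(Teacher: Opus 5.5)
Your Steps 1--3 are essentially the paper's argument: a one-dimensional Sobolev bound for $w=(\ell^2-x^2)^n\partial_x^dV$, then repeated use of \eqref{hardy-lx} until every weight reaches $(\ell^2-x^2)^{j+2}$. Your choice $\rho=\ell^2-x^2$ for $n\ge 1$ is a harmless refinement of the paper's $\rho=1$. Your derivative count should read $2d+3-2n=k-1$, not $2d+2-2n$, so for $n\ge1$ and $2n\le d+2$ you even end up with $|V|_{k-1}$. These steps prove \eqref{c0_embed} completely whenever $k\ge d+1$, i.e.\ $2n\le d+3$. This includes the borderline case $k=d+1$ that you list as an obstacle. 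In that regime the two starting pairs $(\beta,j)=(2n+1,d+1)$ and $(2n-1,d)$ only generate seminorms $[V]_j$ with $j\le k$. The paper's proof relies on the same restriction: it ends with $\sum_{j=d}^k[V]_j^2$, and this sum must contain $[V]_{d+1}$.

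The genuine gap is your final paragraph, i.e.\ the regime $2n\ge d+4$ (equivalently $k\le d$). There you propose to lower the number of derivatives by integrating by parts and to close with \eqref{interp-ineq}. This cannot work. Integrating $\int(\ell^2-x^2)^\beta(\partial_x^jV)^2\,\d x$ by parts always produces $\partial_x^{j+1}V$. Likewise, \eqref{interp-ineq} always carries the higher seminorm $[V]_{k+n}$ on its right-hand side. So no such manipulation bounds $[V]_{d+1}$ by seminorms of order at most $d$, yet $[V]_{d+1}$ is needed already in the interior to control $\partial_x^dV$ pointwise. In fact the estimate is false in that regime. Take $\phi\in C_c^\infty(-1,1)$ with $\phi^{(d)}(0)=1$ and set $V_\eps(x)\coloneqq\eps^d\phi(x/\eps)$ for $0<\eps<\ell$. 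Then $\|(\ell^2-x^2)^n\partial_x^dV_\eps\|_{C^0([-\ell,\ell])}\ge\ell^{2n}$. On the other hand, $[V_\eps]_j^2\le\ell^{2j+4}\eps^{2(d-j)+1}\|\phi^{(j)}\|_{L^2}^2\to0$ as $\eps\searrow0$ for every $j\le d$. Hence $|V_\eps|_k\to0$ whenever $0\le k\le d$ (e.g.\ $d=0$, $n=2$, $k=0$), and for $k<0$ the right-hand side is not even defined. So this paragraph should be dropped rather than repaired. The lemma must be restricted to $2n\le d+3$, or $k$ replaced by $\max\{2d+4-2n,d+1\}$. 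Under that hypothesis your Steps 1--3 already form a complete proof.
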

More refined estimates using interpolation spaces (cf.~\cite{bringmann2016corrigendum,giacomelli2008smooth,gnann2015well}) allow to prove an analogous estimate with control on one derivative less (one can prove the embedding $(\cH^4,\cH^6)_{\frac 1 2,1} \hookrightarrow C^1([-\ell,\ell])$, where $(\cdot,\cdot)_{\theta,q}$ is the real-interpolation functor \cite{bennett1988interpolation}, see \cite[Lemma~3.5]{gnann2015well} for an analogous case). We thus obtain control on $\|V_x\|_{C^0([-\ell,\ell])}$ in estimate~\eqref{maxreg-general} for $k \ge 6$, leading to well-posedness and stability in this setting.

\begin{proof}[Proof of Lemma~\ref{lem-c0}]
Observe that by Sobolev embedding and afterwards applying estimate~\eqref{hardy-lx} of Lemma~\ref{lem-hardy-lx},
\begin{align*}
\|(\ell^2-x^2)^n \partial_x^d V\|_{C^0([-\ell,\ell])}^2 &\lesssim_{d,\ell\phantom{,n}} \int_{-\ell}^\ell (\ell^2-x^2)^{2n} \big((1+n^2 (\ell^2-x^2)^{-2}) (\partial_x^d V)^2 + (\partial_x^{d+1} V)^2\big) \, \d x \\
&\lesssim_{d,\ell,n} \sum_{j = d}^k \int_{-\ell}^\ell (\ell^2-x^2)^{j+2} (\partial_x^j V)^2 \, \d x \le |V|_k^2. \qedhere
\end{align*}
\end{proof}
\begin{proposition}[Estimate of the nonlinearity]\label{prop-nonlinear}
For $k \in \N_0$, there is a sufficiently large constant $C = C(k,\ell) < \infty$ such that, the estimate
\begin{equation}\label{est-nonlinearity}
|N[V]-N[W]|_k \le C \frac{(|V|_{k'+2}+|W|_{k'+2}) |V-W|_{k+4} + (|V|_{k+4}+|W|_{k+4}) |V-W|_{k'+2}}{1-C(|V|_{k'+2}+|W|_{k'+2})}
\end{equation}
holds for $V, W \in \cH^{k+4} \cap \cH^6$, provided $C (|V|_{k'+2}+|W|_{k'+2}) < 1$, where $k' \coloneqq \max\{\lfloor\frac{k+3}{2}\rfloor,4\}$.
\end{proposition}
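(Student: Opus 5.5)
We first make the nonlinearity explicit. Writing $Z = x+V$, so that $Z_x = 1+V_x$, we have $\partial_Z = (1+V_x)^{-1}\partial_x$ and $\Theta = U(1+V_x)^{-1}$. By \eqref{nonlinearity} and \eqref{grad-u-ez},
\[
N[V] = \cL V + \partial_Z^3\Theta - \Theta^2\partial_Z\Theta - \tfrac{x+V}{5}.
\]
Expanding in the variable $w \coloneqq V_x$, every term of $\nabla_U E[x+V]$ has the form $a(x)\,R(w)\,\prod_i \partial_x^{m_i} w$. Here $a$ is a derivative of $U$ (or a power of $U$), $R$ is a rational function in $1+w$ that is analytic for $|w|<1$, and the weights match $\cL$: a derivative $\partial_x^j V$ with $j\ge 3$ appears with a factor comparable to $U$ times powers of $U'$, and so on, so that each term has the same degeneracy as the corresponding term in \eqref{formula-l-2}. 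The linear part cancels exactly against $\cL V$. Hence $N[V]$ is a finite sum of terms that are at least quadratic: one factor of highest order $\partial_x^{j}V$ with $j\le 4$, weighted by $U$, $U'$, $U''$ or $U^3$, multiplied by at least one extra factor $\partial_x^{m}V$ with $m\le 3$ (carrying the complementary weight), and by a smooth function $R(V_x)$ with $R(0)$ bounded. The geometric series $\sum_n (CV_x)^n$ produced by $R$ is where the denominator $1-C(|V|_{k'+2}+|W|_{k'+2})$ will come from, since Lemma~\ref{lem-c0} with $n=0$, $d=1$ gives $\|V_x\|_{C^0}\lesssim |V|_6\le|V|_{k'+2}$.

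For the difference $N[V]-N[W]$ we write each multilinear term telescopically, so that exactly one factor carries $V-W$. For $R(V_x)-R(W_x)$ we use the mean value theorem, which yields $(V_x-W_x)\int_0^1 R'(\cdot)$, bounded in $C^0$ by the same geometric series. We then apply $\partial_x^j$ for $j\le k$, multiply by $(\ell^2-x^2)^{(j+2)/2}$, and distribute the derivatives by Leibniz. In each product we place the factor carrying the most derivatives in a weighted $L^2$ norm and all other factors in weighted $C^0$ via Lemma~\ref{lem-c0}. The weights are distributed using Lemma~\ref{lem-bounds-uphi} ($U\sim(\ell^2-x^2)^2$, $|U'|\sim \ell^2-x^2$), and Lemma~\ref{lem-hardy-lx} handles leftover negative powers of $(\ell^2-x^2)$.

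The key counting step is as follows. If the top factor has $\partial_x^{p}$ with $p\le k+4$, it is bounded by $|\cdot|_{k+4}$. The other factors then carry at most about $(k+4)/2+1$ derivatives, because at least one factor has low order. By Lemma~\ref{lem-c0} with $d\approx (k+3)/2$, these are controlled by $|\cdot|_{2d+4-2n}$. Choosing $n$ as large as the available weight permits brings this down to $|\cdot|_{k'+2}$ with $k'=\max\{\lfloor\tfrac{k+3}{2}\rfloor,4\}$. The lower bound $4$ ensures that we always control $V_x$ in $C^0$. Assigning the $V-W$ factor to either slot gives the two terms in the numerator of \eqref{est-nonlinearity}.

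The main obstacle is the weight bookkeeping near $x=\pm\ell$. We must check that for each Leibniz term the combined power of $(\ell^2-x^2)$ from $a(x)$ and from the norm weight $(\ell^2-x^2)^{(j+2)/2}$ suffices to put the top factor in $[\cdot]_p$ with weight $(\ell^2-x^2)^{p+2}$, while leaving nonnegative powers $n$ for the $C^0$ factors. This holds because $N$ inherits the scaling of $\cL$: each additional derivative costs exactly one power of $(\ell^2-x^2)$, which is the structure encoded in Lemma~\ref{lem-der-l}. Whenever a term lacks weight, we would use Lemma~\ref{lem-hardy-lx} to trade a derivative for weight, and use the interpolation inequality of Lemma~\ref{lem-interp} to absorb intermediate norms into $|\cdot|_{k+4}$ and $|\cdot|_{k'+2}$.
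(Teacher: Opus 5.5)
Your proposal follows the paper's proof essentially step by step. Both expand $(1+V_x)^{-1}$ (your analytic $R$ with the mean-value theorem plays the role of the paper's geometric series with telescoped monomials), observe that $N[V]$ is at least quadratic, distribute $\partial_x^r$ by Leibniz, put the highest-order factor in a weighted $L^2$ integral via Lemma~\ref{lem-hardy-lx} and all other factors in $C^0$ with weights $(\ell^2-x^2)^{d-1}$ via Lemma~\ref{lem-c0}, and sum the series to produce the denominator; one caveat, which applies equally to the paper's write-up, is that for $3\le r\le k\le 8$ the term $c\,U(1+V_x)^{-7}(\partial_x^2V)^3$ ($c\neq0$) of $\partial_Z^3\Theta$ yields under $\partial_x^r$ a multiple of $U(1+V_x)^{-7-r}(\partial_x^2V)^{r+3}$, and giving each of the $r+2$ non-top factors the weight $\ell^2-x^2$ that Lemma~\ref{lem-c0} requires for a bound by $|\cdot|_{k'+2}$ leaves only $(\ell^2-x^2)^{2-r}$ on the $L^2$ factor, so the bound already diverges for $V=x^2$, Hardy does not ``handle leftover negative powers'' here, and the weights must be redistributed more carefully (e.g.\ via sharper fractional-weight $C^0$ bounds).
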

\begin{proof}
From \eqref{grad-u-ez} and \eqref{nonlinearity} we infer
\[
N[V] = \cL V + \partial_Z^3 \Theta - \Theta^2 \partial_Z \Theta - \frac Z 5, \quad Z = x+V, \quad \partial_Z = (1+V_x)^{-1} \partial_x, \quad \Theta = U (1+V_x)^{-1}.
\]
On assuming $\|V_x\|_{C^0([0,\infty) \times [-\ell,\ell])} < 1$, we can expand
\[
(1+V_x)^{-1} = \sum_{m = 0}^\infty (-V_x)^m.
\]
It then holds (we use the convention that derivatives act on everything to their right and do not write parentheses for the sake of readability)
\begin{align*}
\partial_Z^3 \Theta &= \sum_{j = 1}^4 \sum_{m_j = 0}^\infty (-V_x)^{m_1} \partial_x (-V_x)^{m_2} \partial_x (-V_x)^{m_3} \partial_x (-V_x)^{m_4} U, \\
- \Theta^2 \partial_Z \Theta &= - U^2 \sum_{j = 1}^3 \sum_{m_j = 0}^\infty (-V_x)^{m_1} (-V_x)^{m_2} \partial_x (-V_x)^{m_3} U,
\end{align*}
so that
\begin{align}\label{structure-nv}
N[V] = \sum_{m \coloneqq \sum_{j = 1}^4 m_j \ge 2} (-1)^m V_x^{m_1} \partial_x V_x^{m_2} \partial_x V_x^{m_3} \partial_x V_x^{m_4} U  + U^2 \sum_{m \coloneqq \sum_{j = 1}^3 m_j \ge 2} (-1)^{m+1} V_x^{m_1+m_2} \partial_x V_x^{m_3} U.
\end{align}
We consider
\begin{align*}
|N[V]-N[W]|_k^2 = \sum_{r = 0}^k \int_{-\ell}^\ell (\ell^2-x^2)^{r+2} (\partial_x^r (N[V]-N[W]))^2 \, \d x,
\end{align*}
where due to \eqref{est-u-ell^2-x^2} of Lemma~\ref{lem-bounds-uphi}, we need to estimate the following terms (with polynomial pre-factors in $m_j$ with degree $\le k+3$ or $\le k+1$, respectively, that do not matter in a geometric series):
\begin{align*}
\text{terms I: } & \int_{-\ell}^\ell (\ell^2-x^2)^{r+2+2n} \prod_{\kappa = 1}^{m - q} V_{\kappa,x}^2 \prod_{p = 1}^q (\partial_x^{d_p} V_{m - q +p})^2 \, \d x, \\
& \quad \text{where} \quad V_\kappa \in \{V, W, V-W\}, \quad m \coloneqq \sum_{j = 1}^4 m_j \ge 2, \quad 0 \le r \le k, \quad 1 \le q \le \min\{m, r+3\}, \\
&  \quad \phantom{\text{where}} \quad d_{p} \ge d_{p+1} \ge 2, \quad 0 \le n \le 2, \quad \sum_{p = 1}^q (d_p-1) \le r+1+n,
\end{align*}
and
\begin{align*}
\text{terms II: } & \int_{-\ell}^\ell (\ell^2-x^2)^{r+2+2n} \prod_{\kappa = 1}^{m - q} V_{\kappa,x}^2 \prod_{p = 1}^q (\partial_x^{d_p} V_{m-q+p})^2 \, \d x, \\
& \quad \text{where} \quad V_\kappa \in \{V, W, V-W\}, \quad m \coloneqq \sum_{j = 1}^3 m_j \ge 2, \quad 0 \le r \le k, \quad 1 \le q \le \min\{m, r+1\}, \\
& \quad \phantom{\text{where}} \quad d_p \ge d_{p+1} \ge 2, \quad 5-r \le n \le 6, \quad \sum_{p = 1}^q (d_p-1) \le r+n-5.
\end{align*}
We can now estimate terms I and II,
\begin{align*}
& \int_{-\ell}^\ell (\ell^2-x^2)^{r+2+2n} \prod_{\kappa = 1}^{m - q} V_{\kappa,x}^2 \prod_{p = 1}^q (\partial_x^{d_p} V_{m - q +p})^2 \, \d x \\
& \quad \le \prod_{\kappa = 1}^{m - q} \|V_{\kappa,x}\|_{C^0([-\ell,\ell])}^2 \int_{-\ell}^\ell (\ell^2-x^2)^{r+2+2n} \prod_{p = 1}^q (\partial_x^{d_p} V_{m - q +p})^2 \, \d x \\
& \quad \lesssim_{k,\ell} \prod_{\kappa = 1}^{m - q} \|V_{\kappa,x}\|_{C^0([-\ell,\ell])}^2 \prod_{p = 2}^q \|(\ell^2-x^2)^{d_p-1} \partial_x^{d_p} V_{m-q+p}\|_{C^0([-\ell,\ell])}^2 \int_{-\ell}^\ell (\ell^2-x^2)^{2(d_1-1)-k} (\partial_x^{d_1} V_m)^2 \, \d x \\
& \quad \le \prod_{\kappa = 1}^{m - 1} (C |V_\kappa|_{k'+2})^2 (C |V_m|_{k+4})^2,
\end{align*}
where \eqref{hardy-lx} of Lemma~\ref{lem-hardy-lx} and \eqref{c0_embed} of Lemma~\ref{lem-c0} were used in the last line. Note that in case I we have used
\[
\sum_{p = 1}^q 2 (d_p-1) -k \le r + 2 + 2n \quad \Leftarrow \quad 2(r+1+n) - k \le r + 2 + 2n \quad \Leftarrow \quad r \le k,
\]
and
\[
d_p - 1 \le \frac{r+1+n}{2} \le \frac{k+3}{2} \quad \text{for} \quad p \ge 2, \quad \text{which implies} \quad d_p \le \frac{k+5}{2} \quad \text{for} \quad p \ge 2.
\]
In case II we have used
\[
\sum_{p = 1}^q 2 (d_p - 1) - k \le r + 2 + 2n \quad \Leftarrow \quad 2(r+n-5) - k \le r + 2 + 2n \quad \Leftarrow \quad r \le k,
\]
and
\[
d_p-1 \le \frac{r+n-5}{2} \le \frac{k+1}{2} \quad \text{for} \quad p \ge 2, \quad \text{which implies} \quad d_p \le \frac{k+3}{2} \quad \text{for} \quad p \ge 2.
\]
We thus obtain \eqref{est-nonlinearity} upon enlarging $C < \infty$.
\end{proof}
\subsection{Well-posedness and regularity of the nonlinear Cauchy problem}\label{sec-well}
The combination of maximal regularity of the linear equation \eqref{pde-linear} (cf.~Proposition~\ref{prop-maxreg}) and a suitable estimate of the nonlinearity \eqref{nonlinearity} (cf.~Proposition~\ref{prop-nonlinear}) leads to a well-posedness result for the nonlinear problem:

\begin{proof}[Proof of Theorem~\ref{th-well}]
We define the triple-bar norm $\tbar \cdot \tbar_k$ through
\[
\tbar V \tbar_k^2 \coloneqq \sum_{k' = 2}^k \Big(\sup_{s \ge 0} s^{k'-2} |V(s)|_{2k'+2}^2 + \int_0^\infty s^{k'-2} |V(s)|_{2k'+4}^2 \, \d s\Big).
\]
For $0 < \delta \le 1$ we define the ball $\cB_{k,\delta} \coloneqq \{V \colon \tbar V \tbar_k \le \delta\}$. For $V \in \cB_{k,\delta}$ observe that by estimate~\eqref{est-nonlinearity} of Proposition~\ref{prop-nonlinear} it holds $N[V] \in L^2(0,\infty;\cH^4)$. Define the mapping $\cT[V]$ as the unique solution to
\[
\partial_s (\cT[V]) + \cL (\cT[V]) = N[V] \quad \text{for} \quad s > 0, \quad \text{subject to} \quad \cT[V] = V^{(0)} \quad \text{at} \quad s = 0,
\]
given by Proposition~\ref{prop-maxreg}. We first consider the case $k = 2$. It then holds for $V \in \cB_{2,\delta}$, $C = C(\ell) < \infty$ with $C \delta < 1$, and $|V^{(0)}|_6 \le \delta^2$,
\begin{equation}\label{est-t}
\tbar \cT[V] \tbar_2 \stackrel{\eqref{a-priori-lin}}{\lesssim_\ell} |V^{(0)}|_6 + \lVert N[V] \rVert_{L^2(0,\infty;\cH^4)} \stackrel{\eqref{est-nonlinearity}}{\lesssim_\ell} \delta^2 + \delta^2 (1-C \delta)^{-1},
\end{equation}
where we have used Proposition~\ref{prop-maxreg} and Proposition~\ref{prop-nonlinear} above. Hence, $\cT[V] \in \cB_{2,\delta}$ provided $\delta \ll_\ell 1$. This proves that $\cT \colon \cB_{2,\delta} \to \cB_{2,\delta}$ is a self-map. In the same way we can deduce for $V, W \in \cB_{2,\delta}$,
\[
\tbar \cT[V] - \cT[W] \tbar_2 = \tbar \cT[V-W] \tbar_2 \stackrel{\eqref{a-priori-lin}}{\lesssim_\ell} \lVert N[V] - N[W] \rVert_{L^2(0,\infty;\cH^4)} \stackrel{\eqref{est-nonlinearity}}{\lesssim_\ell} \delta (1-C \delta)^{-1} \tbar V - W \tbar_2,
\]
proving that for $\delta \ll_\ell 1$ the map $\cT \colon \cB_{2,\delta} \to \cB_{2,\delta}$ is a contraction and thus has a unique fixed point $V \in \cB_{2,\delta}$, that is, a solution to \eqref{nonlinear-cauchy}. This proves existence in the minimal setting $k = 2$ by means of the contraction-mapping theorem.

\medskip

Now suppose that $V$ is the above constructed solution and $W$ with $\tbar W \tbar_2 < \infty$ is another solution to \eqref{nonlinear-cauchy}. By uniqueness of the linear Cauchy problem (cf.~Proposition~\ref{prop-maxreg}) we have for $\chi_\eps(s) \coloneqq 1$ if $s \le \eps$ and $\chi_\eps(s) = 0$ if $s > \eps$, where $0 < \eps < \infty$, due to Proposition~\ref{prop-nonlinear},
\begin{align*}
\tbar \chi_\eps (V - W) \tbar_2 &= \tbar \chi_\eps (\cT[V] - \cT[W]) \tbar_2 = \tbar \chi_\eps \cT[V - W] \tbar_2 \stackrel{\eqref{a-priori-lin}}{\lesssim_\ell} \lVert \chi_\eps (N[V] - N[W]) \rVert_{L^2(0,\infty;\cH^4)} \\
&\stackrel{\mathclap{\eqref{est-nonlinearity}}}{\lesssim_\ell} (\tbar \chi_\eps V \tbar_2 + \tbar \chi_\eps W \tbar_2) (1-C (\tbar \chi_\eps V \tbar_2 + \tbar \chi_\eps W \tbar_2))^{-1} \tbar \chi_\eps (V - W) \tbar_2,
\end{align*}
provided $C (\tbar \chi_\eps V \tbar_2 + \tbar \chi_\eps W \tbar_2) < 1$. Since $\lim_{\eps \searrow 0} \tbar \chi_\eps V \tbar_2 = \lim_{\eps \searrow 0} \tbar \chi_\eps W \tbar_2 = |V^{(0)}|_6 \le \delta^2$ it follows for $\delta \ll_\ell 1$ and $\eps \ll_\ell 1$ that $V(s,x) = W(s,x)$ for $0 \le s \le \eps$ and $-\ell \le x \le \ell$. On the other hand, estimate~\eqref{est-t} entails $|V(s)|_6 \le \delta^2$ on choosing $|V^{(0)}|_6$ sufficiently small, and hence we must have $V = W$ for $|V^{(0)}|_6 \ll_\ell 1$ by time-translation invariance.

\medskip

In order to prove higher regularity and the {\it \`a-priori} estimate \eqref{a-priori-nonlin}, note that as in \eqref{est-t} we obtain
\[
\tbar V \tbar_2 \lesssim_\ell |V^{(0)}|_6 + \delta (1-C \delta)^{-1} \tbar V \tbar_2 \quad \Rightarrow \quad \tbar V \tbar_2 \lesssim_\ell |V^{(0)}|_6 \le \delta^2
\]
for $\delta \ll_\ell 1$. Thus we obtain for $V \in \cB_{3,\delta}$ with help of Proposition~\ref{prop-maxreg} and Proposition~\ref{prop-nonlinear},
\begin{align*}
\tbar \cT[V] \tbar_3 &\stackrel{\eqref{a-priori-lin}}{\lesssim_\ell} |V^{(0)}|_6 + \lVert N[V] \rVert_{L^2(0,\infty;\cH^4)} + \lVert s \mapsto s^{\frac 1 2} N[V(s)] \rVert_{L^2(0,\infty;\cH^6)} \\
&\stackrel{\eqref{est-nonlinearity}}{\lesssim_\ell} |V^{(0)}|_6 + \frac{\|V\|_{C^0([0,\infty);\cH^6)}}{1-C \|V\|_{C^0([0,\infty);\cH^6)}} \big(\lVert V \rVert_{L^2(0,\infty;\cH^8)} + \lVert s \mapsto s^{\frac 1 2} V(s) \rVert_{L^2(0,\infty;\cH^{10})}\big) \\
&\lesssim_\ell |V^{(0)}|_6 + \delta (1-C\delta)^{-1} 
\tbar V \tbar_3
.
\end{align*}
Hence, for $\delta \ll_\ell 1$ there exists $C = C(\ell) < \infty$ such that
\[
\tbar \cT[V] \tbar_3 \le C |V^{(0)}|_6 + \frac 1 2 \tbar V \tbar_3.
\]
For an arbitrary element $V_0 \in \cB_{3,\delta}$ define $V_{j+1} \coloneqq \cT[V_j]$. Then we obtain
\[
\tbar V_j \tbar_3 \le C (2-2^{-j}) |V^{(0)}|_6 + 2^{-j} \tbar V_0 \tbar_3.
\]
Since as $j \to \infty$ the sequence $(V_j)_j$ converges to the unique solution $V$ to \eqref{nonlinear-cauchy} in $\tbar \cdot \tbar_2$, we infer by weak lower semi-continuity of the norm that $\tbar V \tbar_3 \le 2 C |V^{(0)}|_6$. We now boot-strap this argument and assume that $\tbar V(s_{k-1,\ell}+\cdot) \tbar_{k-1} \lesssim_{k,\ell} |V^{(0)}|_6$. We then obtain for $k \ge 4$, $s_{k,\ell} > s_{k',\ell}$ for all $k' < k$, and $V \in \cB_{k,\delta}$,
\begin{align*}
\tbar \cT[V(\cdot+s_{k,\ell})] \tbar_{k} &\stackrel{\eqref{a-priori-lin}}{\lesssim_\ell} |V^{(0)}|_6 + \sum_{k' = 2}^k \lVert s \mapsto s^{\frac{k'-2}{2}} N[V(s+s_{k,\ell})] \rVert_{L^2(0,\infty;\cH^{2k'})} \\
&\stackrel{\eqref{est-nonlinearity}}{\lesssim_\ell} |V^{(0)}|_6 + \sum_{k' = 2}^k \frac{\|V(s_{k,\ell}+\cdot)\|_{C^0([0,\infty);\cH^{2k})}}{1-C \|V(s_{k,\ell}+\cdot)\|_{C^0([0,\infty);\cH^{2k})}} \lVert s \mapsto s^{\frac{k'-2}{2}} V(s+s_{k,\ell}) \rVert_{L^2(0,\infty;\cH^{2k'+4})} \\
&\lesssim_{k,\ell} |V^{(0)}|_6 + \frac{\|V(s_{k,\ell}+\cdot)\|_{C^0([0,\infty);\cH^{2k})}}{1-C \|V(s_{k,\ell}+\cdot)\|_{C^0([0,\infty);\cH^{2k})}} 
\tbar V(\cdot+s_{k,\ell}) \tbar_k
.
\end{align*}
We have $\|V(s_{k,\ell}+\cdot)\|_{C^0([0,\infty);\cH^{2k})} \to 0$ as $s_{k,\ell} \to \infty$ since $\tbar V(s_{k-1,\ell}+\cdot) \tbar_{k-1} < \infty$, so that for $s_{k,\ell}$ sufficiently large this entails as before $\tbar V(s_{k,\ell}+\cdot) \tbar_k \lesssim_{k,\ell} |V^{(0)}|_6$ for the unique solution $V$ to \eqref{nonlinear-cauchy}. The additional time regularity in \eqref{a-priori-nonlin} follows from using the nonlinear PDE \eqref{pde-cauchy} in conjunction with estimate~\eqref{hardy-lx} of Lemma~\ref{lem-hardy-lx} and estimate~\eqref{est-nonlinearity} of Proposition~\ref{prop-nonlinear}.
\end{proof}

\section{Stability}\label{sec-stability}
Here, we provide a rigorous stability proof based on applying Gr\"onwall's lemma.

\begin{proof}[Proof of Theorem~\ref{th-stability}]
In what follows, we assume $s \gg_{k,\ell} 1$ so that by \eqref{regularity-weighted-2} of Theorem~\ref{th-well} sufficient regularity is guaranteed. From \eqref{pde-cauchy} and $\cL^k$ applied, tested against $\cL^k V$, we get
\begin{equation*}
|\cL^k V(s)|_U^2 \le |\cL^k V(s_{k,\ell})|_U^2 - \frac 2 5 \int_{s_{k,\ell}}^s |\cL^k V|_U^2 \, \d s' + 2 \int_{s_{k,\ell}}^s (\cL^k N[V], \cL^k V)_U \, \d s'.
\end{equation*}
For $k \ge 1$ we get with Lemma~\ref{lem-coerc}, Lemma~\ref{lem-bounds-uphi}, and Lemma~\ref{lem-elliptic} for $s_{k,\ell} \gg_{k,\ell} 1$,
\begin{align*}
|\cL^k V(s)|_U^2 \qquad &\stackrel{\mathclap{\eqref{coercivity-l-2-1}}}{\le} \qquad |\cL^k V(s_{k,\ell})|_U^2 - \frac 2 5 \int_{s_{k,\ell}}^s |\cL^k V|_U^2 \, \d s' \\
&\phantom{\le} \qquad + 2 \int_{s_{k,\ell}}^s \int_{-\ell}^\ell \big(\tfrac 1 5 U (\cL^{k-1} N[V]) (\cL^k V) + \Phi (\partial_x \cL^{k-1} N[V]) (\partial_x \cL^k V) \\
&\phantom{\phantom{\le} \qquad + 2 \int_{s_{k,\ell}}^s \int_{-\ell}^\ell \big(} + U^2 (\partial_x^2 \cL^{k-1} N[V]) (\partial_x^2 \cL^k V)\big) \, \d x\, \d s' \\
&\stackrel{\mathclap{\eqref{est-u-ell^2-x^2},\eqref{est-l}}}{\le} \qquad |\cL^k V(s_{k,\ell})|_U^2 - \frac 2 5 \int_{s_{k,\ell}}^s |\cL^k V|_U^2 \, \d s' + C \int_{s_{k,\ell}}^s |N[V]|_{4k-2} |V|_{4k+2} \, \d s' \\
&\stackrel{\mathclap{\eqref{est-nonlinearity}}}{\le} \qquad |\cL^k V(s_{k,\ell})|_U^2 - \frac 2 5 \int_{s_{k,\ell}}^s |\cL^k V|_U^2 \, \d s' + C \int_{s_{k,\ell}}^s \frac{|V|_{\max\{2k+2,6\}} |V|_{4k+2}^2}{1-C|V|_{\max\{2k+2,6\}}} \, \d s',
\end{align*}
where the nonlinear estimate, Proposition~\ref{prop-nonlinear}, has been used in the last step and we have used the fact that $|V|_{\max\{2k+2,6\}} \ll_{k,\ell} 1$ (which is true for $|V^{(0)}|_6 \ll_\ell 1$ and $s \gg_{k,\ell} 1$ by the nonlinear {\it \`a-priori} estimate~\eqref{a-priori-nonlin} of Theorem~\ref{th-well}). Hence, Gr\"onwall's lemma entails
\begin{equation}\label{decay-clmv}
|\cL^k V(s)|_U^2 \le \Big(|\cL^k V(s_{k,\ell})|_U^2 + C \int_{s_{k,\ell}}^s \frac{|V|_{\max\{2k+2,6\}} |V|_{4k+2}^2}{1-C|V|_{\max\{2k+2,6\}}} \, \d s'\Big) e^{- \frac 2 5 s}.
\end{equation}
With \eqref{a-priori-nonlin} of Theorem~\ref{th-well} we infer that $\int_{s_{k,\ell}}^\infty \frac{|V|_{\max\{2k+2,6\}}^2 |V|_{4k+2}^2}{(1-C |V|_{\max\{2k+2,6\}})^2} \, \d s < \infty$, so that with \eqref{est-l} of Lemma~\ref{lem-elliptic} (elliptic regularity) we have
\begin{equation*}
|V(s)|_{4k} \sim_{k,\ell} |\cL^k V(s)|_U \lesssim_{k,\ell} e^{- \frac 1 5 s} \quad \text{for} \quad s \gg_{k,\ell} 1,
\end{equation*}
which implies \eqref{est-decay} for $m = 0$. For $m = 1$ this follows from \eqref{est-decay} for $m = 0$ in conjunction with the PDE \eqref{pde-cauchy}, \eqref{hardy-lx} of Lemma~\ref{lem-hardy-lx}, and the nonlinear estimate \eqref{est-nonlinearity} of Proposition~\ref{prop-nonlinear}.
\end{proof}
\addcontentsline{toc}{section}{References}
\bibliographystyle{plain}
\bibliography{gnann-ibrahim}

\end{document}